\documentclass[11pt,leqno]{article}
\usepackage{amsmath}
\usepackage{rotating}
\usepackage{theorem}
\usepackage{amssymb}
\usepackage{epic}
\usepackage{curves}

\usepackage{amsmath, amsfonts, mathrsfs,latexsym,color,array}

\usepackage{cite}
\usepackage{url}
\usepackage{a4}

\setlength{\textheight}{8.5truein}
\setlength{\textwidth}{6.5truein}
\voffset-1in
\hoffset-.5in


\setlength{\parskip}{12pt}
\setlength{\parindent}{0pt}

\numberwithin{equation}{section}

\def\del{\partial}

\newcommand{\nint}{{{\int \kern-1.13em {\begin{turn}{-20}$\bigm /$%
\end{turn}}\!\!\!}}}
\newcommand{\notint}
{{{\,\int \kern-1.01em \raise1pt\hbox{{\begin{turn}{-30}$/$%
\end{turn}\!\!}}}}}

\newcommand{\NotInt}{{\setlength{\unitlength}{.3mm}
\begin{picture}(15,15)(0,0)
\put(-2,0){\line(2,1){12}}
\put(0,0){\makebox{$\int$}}
\end{picture}\kern-2mm}}

\newcommand{\NOTINT}{{\setlength{\unitlength}{.3mm}
\begin{picture}(15,15)(0,0)
\put(0,0){\line(2,1){12}}
\put(0,0){\makebox{$\displaystyle{\int}$}}
\end{picture}\kern-2mm}}

\newcommand\qed{\hfill\vrule height8pt width6pt depth0pt}

\def\res{
\hbox{ {\vrule height .22cm}{\leaders\hrule\hskip.2cm} }
}

\renewcommand{\d}{\delta}

\newcommand{\diam}{\,\mathrm{diam}\,}
\newcommand{\dist}{\,\mathrm{dist}\,}

\newcommand{\e}{\epsilon}
\newcommand{\G}{\Gamma}
\newcommand{\g}{\gamma}
\renewcommand{\H}{\mathcal H}

\newcommand{\lip}{\mathrm{Lip}}

\newcommand{\NN}{\mathbb{N}}
\renewcommand{\O}{\Omega}

\renewcommand{\o}{\omega}

\newcommand{\po}{\partial\O}
\newcommand{\RR}{\mathbb R}

\newcommand{\p}{\partial}
\newcommand{\spt}{\mathrm{spt}\;}

\newcommand{\lan}{\langle}
\newcommand{\ran}{\rangle}

\newcommand{\beq}{\begin{equation}}
\newcommand{\bea}{\begin{eqnarray}}

\newcommand{\eeq}{\end{equation}}
\newcommand{\eea}{\end{eqnarray}}

\theoremstyle{plain}
\newtheorem{thm}{Theorem}[section]
\newtheorem{lem}{Lemma}[section]

\newtheorem{prop}{Proposition}[section]
\newtheorem{cor}{Corollary}[section]

\begingroup \makeatletter
\gdef\th@special{\normalfont\itshape
  \def\@begintheorem##1##2{%
        \item[\hskip\labelsep \theorem@headerfont ##1\ ##2.]}%
\def\@opargbegintheorem##1##2##3{%
   \item[\hskip\labelsep \theorem@headerfont ##1\ ##2\ (##3)]}}
\makeatother
\endgroup
\theoremstyle{special}

{\theorembodyfont{\rmfamily}

\newtheorem{remark}{Remark}[section]

\newtheorem{defn}{Definition}[section]

\newcommand{\cM}{\mathcal M}
\newcommand{\cF}{\mathcal F}

\newenvironment{proof}{\medskip\noindent{\bf Proof.}}{\medskip}

\newcommand{\Tan}{\mathrm{Tan}\,}

\begin{document}

\title{Boundary structure and size in terms of interior and exterior harmonic measures in higher 
dimensions}
\author{C. Kenig\footnote{The first author was partially supported by NSF grant DMS-0456583.}\and  D. Preiss  
\and T. Toro
\footnote{The third author was partially supported by NSF grant DMS-0600915 }}
\date{}
\maketitle

\setcounter{section}{0}

\begin{abstract}{In this work we introduce the use of powerful tools from geometric measure
theory (GMT) to
study problems related to the size and structure of sets of mutual absolute continuity for the
harmonic measure $\o^+$ of a domain $\O=\O^+\subset\RR^n$ and the harmonic
measure $\o^-$ of $\O^-$, $\O^-=\mbox{int}(\O^c)$, in dimension $n\ge 3$.}
\end{abstract}

\section{Introduction}

In this work we introduce the use of powerful tools from geometric measure
theory (GMT) (mainly coming from \cite{P}, in which Preiss proved that if
the $m$-density of a Radon measure $\mu$ in $\RR^n$, exists and is
positive and finite, for $\mu$-almost every point of $\RR^n$, then $\mu$ is
$m$-rectifiable, see \cite{M} for all the relevant definitions) to
study problems related to the size and structure of sets of mutual absolute continuity for the
harmonic measure $\o^+$ of a domain $\O=\O^+\subset\RR^n$ and the harmonic
measure $\o^-$ of $\O^-$, $\O^-=\mbox{int}(\O^c)$, in dimension $n\ge 3$.
These GMT tools are combined with the blow-up analysis developed by
Kenig-Toro \cite{KT2}, the properties of harmonic functions on
non-tangentially accessible (NTA) domains \cite{JK} and the monotonicity
formula of Alt-Caffarelli-Friedman \cite{ACF} to obtain analogs for $n\ge
3$ of some well-known results when $n=2$.

Let us first briefly describe some of the 2-dimensional results. Thus, let
$\O\subset\RR^2$ be a simply-connected domain, bounded by a Jordan curve
and let $\o$ be the harmonic measure associated to $\O$ (see \cite{GM}).
Then we can write  $\po$ as a disjoint union, with the following
properties:
\begin{equation}\label{intro-decomp}
 \po=G\cup S\cup N
\end{equation}
\begin{enumerate}
\item[i)] $\o(N)=0$.
\item[ii)] In $G$, $\o\ll \H^1\ll \o$, where $\H^s$ denotes $s$ dimensional
Hausdorff measure.
\item[iii)] Every point of $G$ is the vertex of a cone in $\O$. Moreover if $C$
denotes the set of ``cone points'' of $\po$, then $\H^1(C\backslash G)=0$
and $\o(C\backslash G)=0$.
\item[iv)] $\H^1(S)=0$.
\item[v)] $S$ consists ($\o$ a.e.) of ``twist points'' (a geometrical
characterization of $S$). See \cite{GM} for the definition of twist point.
\item[]vi) For $\o$ a.e. $Q\in G$ we have that
\[
\lim_{r\to 0}\frac{\o(B(Q,r)\cap\po)}{r}=L\ \hbox{ exits and }\ \ 0<L<\infty. 
\]
\item[]vii) At $\o$ a.e. point $Q\in S$ we have
\begin{eqnarray*}
\limsup_{r\to 0} \frac{\o(B(Q,r)\cap\po)}{r} & = & +\infty \\
\liminf_{r\to 0} \frac{\o(B(Q,r)\cap\po)}{r} & = & 0
\end{eqnarray*}
\end{enumerate}

These results are  a combination of work of Makarov, McMillan, Pommerenke
and Choi. See \cite{GM} for the precise references. 

Recall that the Hausdorff dimension of $\o$ (denote by $\H-\dim\o$) is
defined by
\begin{eqnarray}\label{eqn-TS.A.33}
\H-\dim\o &=& \inf\,\{k:\mbox{ there exists } E\subset\po 
\mbox{ with }\H^k(E)=0\mbox{ and }\\
&\ &  \o(E\cap K)=\o(\po\cap K) 
\mbox{ for all compact sets } K\subset\RR^n\}\nonumber
\end{eqnarray}

Important work of Makarov \cite{Mak} shows that for simply connected domains in $\RR^2$
$\H-\dim\o=1$, establishing Oksendal's conjecture in dimension 2. Carleson \cite{C}, and Jones and 
Wolff \cite{JW} proved in general for domains in $\RR^2$ with a well defined harmonic measure $\o$,
$\H-\dim\o\le1$. T. Wolff \cite{W} showed, by a deep
example, that, for $n\ge 3$, Oksendal's conjecture ($\H-\dim\o=n-1$) fails. He
constructed what we will call ``Wolff snowflakes'', domains in $\RR^3$,
for which $\H-\dim\o>2$ and others for which $\H-\dim\o<2$. In
Wolff's construction, the domains have a certain weak regularity property,
they are non-tangentially accessible domains (NTA), in the sense of
\cite{JK}, in fact, they are 2-sided NTA domains (i.e. $\O$ and $\mbox{int}(\O^c)$
are both NTA) and this plays an important role in his estimates. Here,
whenever we refer to a ``Wolff snowflake,'' we will mean a  2-sided NTA
domain in $\RR^n$, for which $\H-\dim\o\ne n-1$. In \cite{LVV}, Lewis,
Verchota and Vogel reexamined Wolff's construction and were able to
produce ``Wolff snowflakes'' in $\RR^n$, $n\ge 3$, for which both
$\H-\dim\o^\pm>n-1$, and others for which $\H-\dim\o^\pm<n-1$.
They also observed, as a consequence of the monotonicity formula in \cite{ACF},
that if $\o^+\ll\o^-\ll\o^+$ then $\H-\dim\o^\pm\ge n-1$.

Returning to the case of $n=2$, when $\O$ is again simply connected,
bounded by a Jordan curve, $\o^+=\o$ and $\o^-$ equals the harmonic measure for
$\mbox{int}(\O^c)$, Bishop, Carleson, Garnett and Jones \cite{BCGJ} showed that,
if $E\subset\po$, $\o^+(E)>0$, $\o^-(E)>0$, then $\o^+\perp\o^-$ on $E$ if
and only if $\H^1(Tn(\po)\cap E)=0$, where $Q\in Tn(\po)\subset\po$ 
if $\po$ has a unique tangent line at $Q$.
Recall that $\po$ admits a decomposition relative to $\o^\pm$,
$\po=G^\pm\cup S^\pm\cup N^\pm$  (see (\ref{intro-decomp})).
Let $E\subset \po$ be such that $\o^+\ll\o^-\ll\o^+$ on $E$ and $\o^\pm(E)>0$, then, because of 
\cite{BCGJ}, modulo sets of $\o^\pm$ measure $0$, $E\subset Tn(\po)$. 
Using Beurling's inequality, i.e. the fact that for 
$Q\in\po$ and $r>0$, $\o^+(B(Q,r))\o^-(B(Q,r))\le C r^2$, and the characterization above for $G^\pm$ and $S^\pm$ (see ii), vi) and vii)) we conclude that $\o^+\ll\H^1\ll\o^-\ll\o^+$
on $E$. Thus, sets of mutual absolute continuity of $\o^-$, $\o^+$ are ``regular''
and hence obviously of dimension 1.

In \cite{B}, motivated by this last result,  Bishop asked 
whether in the case of
$\RR^n$, $n\ge 3$, if $\o^-, \o^+$ are mutually absolutely continuous on a
set $E\subset\po$, $\o^\pm(E)>0$, then $\o^\pm$ are
mutually absolutely continuous with $\H^{n-1}$ on $E$ (modulo a set of $\o^\pm$ measure zero)  
and hence $\dim_{\H}(E)=n-1$. On the other hand, Lewis, Verchota and Vogel \cite{LVV}
conjectured that there are ``Wolff snowflakes'' in $\RR^n$, $n\ge 3$ with 
$\H-\dim\o^\pm>n-1$, for
which $\o^+$, $\o^-$ are not mutually singular. In this paper we
study these issues, for domains which verify the weak regularity hypothesis
of being 2-sided locally NTA, (a condition which, of course, Wolff
snowflakes verify). This condition ensures that we have scale invariant estimates for
harmonic measures. In the $n=2$ case, the condition is equivalent to locally
being a quasi-circle, but it is weaker than that when $n>2$. We expect that
versions of our results will still be valid under even weaker regularity
assumptions. We would like to stress though that no flatness assumption is
made in this work, and that has been one of the main points that we wanted to
address here, as well as the new introduction of the techniques from GMT
(\cite{P}), combined with the blow-up analysis in \cite{KT2}.

Our main result is that, for $n\ge 3$, $\po=\Gamma^\ast\cup S\cup N$, where
$\o^+\perp\o^-$ on $S$, $\o^\pm(N)=0$, and on $\Gamma^\ast$, $\o^+\ll
\o^-\ll\o^+$, $\dim_{\H}\Gamma^\ast\le n-1$ and if $\o^\pm(\Gamma^\ast)>0$,
$\dim_{\H} \Gamma^\ast=n-1$, where $\dim_{\H}$ denotes the Hausdorff dimension of a set.
As a consequence there can be no ``Wolff
snowflake'' for which $\o^+$, $\o^-$ are mutually absolutely continuous. We
also show that $\G^\ast=\G^\ast_g\cup\G^\ast_b\cup Z$, $\o^\pm(Z)=0$, where in
$\G^\ast_g$, $\H^{n-1}$ is $\sigma$-finite, $\o^-\ll \H^{n-1}\ll\o^+\ll\o^-$,
while on $\G^\ast_b$, for a Borel set $E$ we have that if  $\o^\pm(\G^\ast_b\cap
E)>0$ then $\H^{n-1}(\G^\ast_b\cap E)=+\infty$.
If in addition we assume that $\H^{n-1}\res\po$ is a Radon measure, we show
that $\G^\ast$ is $(n-1)$ rectifiable. In this case, we must have
$\o^\pm(\G^\ast_b)=0$, and hence $\po=\G^\ast_g\cup S\cup\widetilde N$,
$\o^\pm(\widetilde N)=0$ and $\G^\ast_g$ is rectifiable.

Our approach is the following. Using the blow-up analysis developed in
\cite{KT2}, at $\o^\pm$ a.e point on the set where $\o^+$ and $\o^-$ are mutually
absolutely continuous, the
tangent measures to $\o^\pm$ (in the sense of \cite{P}, \cite{M}) are
harmonic measures associated to the zero set of a harmonic polynomial (see Theorem \ref{thm-structure.2}).
Using the fact that for almost every point a tangent measure to a tangent measure is a tangent
measure, (see \cite{M}) and the fact that the zero
set of a harmonic polynomial is smooth except for a set of Hausdorff dimension $n-2$ (see \cite{HS}), one shows
that at $\o^\pm$ a.e. point on this set, $(n-1)$ flat
measures always arise as tangent measures to $\o^\pm$. They correspond to linear
harmonic polynomials. We then show, and this is the crucial step, that if
one tangent measure is flat, on the set of mutual absolute
continuity, then all tangent measures are flat (see Theorem \ref{lem-TS.A.3}). To accomplish this
we use a connectivity argument from \cite{P}. The key point is that if a tangent measure
is not flat, being the harmonic measure associated to the zero set
of a harmonic polynomial of degree higher than 1, its tangent measure at
infinity is far from flat ( see Lemma \ref{lem-TS.2}), and a connectivity argument
in $d$-cones of measures, in the metric introduced by Preiss in \cite{P},
gives a contradiction. 
Modulo a set of $\o^\pm$ measure 0, let $\G^\ast$ be the points in the set 
of mutual absolute continuity for which
one (and hence all) tangent measures are $(n-1)$ flat. An easy
argument (see Lemma \ref{lem-size.2} and the proof of Theorem \ref{lem-TS.A.3}) shows 
that $\dim_{\H}\G^\ast\le n-1$. To
conclude that if $\o^\pm(\G^\ast)>0$, $\dim_{\H}\G^\ast=n-1$, one uses the
Alt-Caffarelli-Friedman monotonicity formula, of \cite{ACF} as in \cite{LVV} . If $\H^{n-1}\res\po$
is a Radon measure, one can show that its density on $\G^\ast$ is $1$,
$\H^{n-1}\res\po$ a.e., which shows that $\G^\ast$ is $(n-1)$
rectifiable (see \cite{M}).

We believe that the techniques we use in showing that if one tangent measure
to $\o^\pm$ is $(n-1)$ flat, all of them are, at points of mutual absolute
continuity, should also prove useful in other situations.

{\bf Acknowledgment}: We are grateful to J.\ Garnett for his detailed
explanation of the 2 dimensional results mentioned here.

\section{Some results in geometric measure theory } 

We start this section with some basic definitions in GMT. Then we recall two families of
``distances'' between Radon measures in Euclidean space which are
compatible with weak convergence. They were initially introduced in \cite{P}.
We finish the section with a general
theorem whose consequences yield several results concerning the
structure of the boundary of a domain based on the relative behavior of 
interior harmonic measure with respect to exterior harmonic measure.

Recall that if $\Phi$ is a Radon measure in $\RR^n$
\begin{equation}\label{eqn-GMTP.1}
\spt\Phi=\{x\in\RR^n:\Phi(B(x,r))>0\ \ \ \forall\,r>0\}.
\end{equation}

\begin{defn}\label{defn-GMTP.1}
Let $\Phi$ and $\Psi$ be Radon measures in $\RR^n$. Let $K$ be a compact
set in $\RR^n$ define
\begin{enumerate}
\item[i)]$F_K(\Phi)=\displaystyle\int\dist(z,K^c)\,d\Phi(z)$.
\item[ii)]If $F_K(\Phi)+F_K(\Psi)<\infty$, let
\end{enumerate}
\begin{equation}\label{eqn-GMTP.2}
F_K(\Phi,\Psi)=\sup\{\left|\int fd\Phi-\int fd\Psi\right|:\spt f\subset K, f\ge 0,
\lip f\le 1\}.
\end{equation}
We denote by $F_r(\Phi)=F_{B(0,r)}(\Phi)$. Note that $F_K(\Phi)=F_K(\Phi,0)$.
\end{defn}

\begin{remark}\label{rem-GMTP.1}
Let $\Phi$ be a Radon measure in $\RR^n$. For $x\in\RR^n$ and $r>0$ define
$T_{x,r}:\RR^n\to\RR^n$ by the formula $T_{x,r}(z)=(z-x)/r$. Note that: 
\begin{enumerate}
\item[i)]$T_{x,r}[\Phi](B(0,s)):=\Phi(T^{-1}_{x,r}(B(0,s))=\Phi(B(x,sr))$ 
for every $s>0$.
\item[ii)]$\displaystyle\int f(z)dT_{x,r}[\Phi](z)=\displaystyle\int f\left(\frac{z-x}{r}\right)
d\Phi(z)$ whenever at least one of these integrals is defined
\item[iii)]$F_{B(x,r)}(\Phi)=r F_1(T_{x,r}[\Phi])$
\item[iv)]$F_{B(x,r)}(\Phi,\Psi)=r F_1(T_{x,r}[\Phi], T_{x,r}[\Psi])$
\end{enumerate}
\end{remark}

\begin{defn}\label{defn-GMTP.2}
Let $\mu, \mu_1, \mu_2, \ldots$ be Radon measures on $\RR^n$. We say that
$\mu_i\to \mu$ or $\lim_{i\to\infty}\mu_i=\mu$ if 
\begin{enumerate}
\item[i)]$\limsup_{i\to\infty}F_K(\mu_i)<\infty$ for every compact set
$K\subset\RR^n$
\item[ii)]$\lim_{i\to\infty} F_K(\mu_i,\mu)=0$ for every compact set
$K\subset\RR^n$.
\end{enumerate}
\end{defn}

\begin{defn}\label{defn-GMTP.3}
Let $\mu, \mu_1, \mu_2, \ldots$ be Radon measures on $\RR^n$. We say that
$\{\mu_i\}$ converges weakly to $\mu$, $\mu_i\rightharpoonup\mu$ if
\begin{equation}\label{eqn-GMTP.3}
\lim_{r\to\infty}\int fd\mu:=\int fd\mu\ \ \ \forall\,\phi\in C_c(\RR^n).
\end{equation}
\end{defn}

\begin{lem}[\cite{P} Proposition 1.11]\label{lem-GMTP.1}
Let $\mu_1,\mu_2\ldots$ and $\mu$ be Radon measures on $\RR^n$ such that
$\limsup_{i\to\infty}\mu_i(K)<\infty$ for each compact set $K$ in $\RR^n$.
Then $\mu_i\to\mu$ if and only if $\mu_i\rightharpoonup \mu$.
\end{lem}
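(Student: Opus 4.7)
The plan is to prove both implications, trading between the class of test functions used to define $F_K$ (nonnegative, 1-Lipschitz, supported in $K$) and arbitrary $f\in C_c(\RR^n)$. In the easy direction I will use uniform Lipschitz approximation of continuous compactly supported functions; in the hard direction the key tool will be Arzela-Ascoli compactness of the Lipschitz test class together with the mass bound.

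For $(\Rightarrow)$, assume $\mu_i\to\mu$. I would first extract uniform local mass bounds: given compact $K$, enclose it in a slightly larger compact $K'$ with $\dist(K,(K')^c)\ge 1$, so that the indicator of $K$ is dominated pointwise by $z\mapsto\dist(z,(K')^c)$, giving $\mu_i(K)\le F_{K'}(\mu_i)$, which is uniformly bounded in $i$ by hypothesis (i). Then for any $f\in C_c(\RR^n)$ supported in a compact set $K$, I would write $f=f^+-f^-$ and for each $\e>0$ approximate $f^{\pm}$ uniformly by nonnegative Lipschitz functions $g^{\pm}$ supported in $K$. The inequality
\[
\Big|\int g^{\pm}\,d\mu_i-\int g^{\pm}\,d\mu\Big|\le \lip(g^{\pm})\,F_K(\mu_i,\mu)
\]
tends to zero by (ii), while the approximation error is at most $\e(\mu_i(K)+\mu(K))$, uniformly bounded in $i$. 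Sending first $i\to\infty$ and then $\e\to 0$ proves $\int f\,d\mu_i\to\int f\,d\mu$.

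For $(\Leftarrow)$, assume $\mu_i\rightharpoonup\mu$ together with $\limsup_i\mu_i(K)<\infty$ for every compact $K$. Condition (i) would follow immediately because $z\mapsto\dist(z,K^c)$ lies in $C_c(\RR^n)$, so $F_K(\mu_i)\to F_K(\mu)<\infty$. For (ii) I plan to argue by contradiction: if $F_K(\mu_i,\mu)\not\to 0$ for some compact $K$, pass to a subsequence to obtain $\e>0$ and test functions $f_i$ supported in $K$, nonnegative, 1-Lipschitz, with $\left|\int f_i\,d\mu_i-\int f_i\,d\mu\right|\ge\e$. The family $\{f_i\}$ is uniformly bounded (by $\diam K$) and equicontinuous, so Arzela-Ascoli yields a further subsequence converging uniformly to some $f$ in the same class. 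Decomposing
\[
\int f_i\,d\mu_i-\int f_i\,d\mu = \int(f_i-f)\,d\mu_i+\Big(\int f\,d\mu_i-\int f\,d\mu\Big)+\int(f-f_i)\,d\mu,
\]
the first and third terms are bounded by $\|f_i-f\|_\infty(\mu_i(K)+\mu(K))$ and vanish thanks to the uniform mass bound, while the middle term vanishes by $\mu_i\rightharpoonup\mu$ applied to $f\in C_c(\RR^n)$; this contradicts the lower bound $\e$.

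The main obstacle lies in the backward direction: upgrading convergence of $\int f\,d\mu_i$ to $\int f\,d\mu$ for a single $f\in C_c$ into uniform convergence over the whole class of admissible test functions in the definition of $F_K$. The tightness hypothesis $\limsup_i\mu_i(K)<\infty$ is precisely what absorbs the approximation errors, and Arzela-Ascoli provides the compactness that turns the contradiction argument into a convergent subsequence.
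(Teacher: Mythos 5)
Your argument is correct. Note that the paper itself gives no proof of this lemma --- it is quoted directly from Preiss (\cite{P}, Proposition 1.11) --- so there is no in-text argument to compare against; your proof is the standard one: inf-convolution (or mollification) to pass from $C_c$ test functions to the Lipschitz class for the forward implication, and Arzela--Ascoli plus the uniform local mass bound to upgrade pointwise weak convergence to the uniform statement $F_K(\mu_i,\mu)\to 0$ in the reverse implication. The only cosmetic points are that the functions realizing $F_K(\mu_{i_j},\mu)\ge\e$ should be chosen with, say, $|\int f_{i_j}\,d\mu_{i_j}-\int f_{i_j}\,d\mu|\ge\e/2$ since the supremum need not be attained, and that the Lipschitz approximants of $f^{\pm}$ should be taken supported in a slightly enlarged compact set (or obtained by inf-convolution, which stays below $f^{\pm}$ and hence inside $K$); neither affects the proof.
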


\begin{lem}[\cite{M}, Lemma 14.13]\label{lem-GMTP.2}
Let $\mu_1, \mu_2, \ldots$ and $\mu$ be Radon measures on $\RR^n$. Then
$\mu_i\to\mu$ if and only if
\begin{equation}\label{eqn-GMTP.4}
\lim_{i\to\infty} F_r(\mu_i,\mu)=0\ \ \ \forall\,r>0.
\end{equation}
\end{lem}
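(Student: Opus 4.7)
My approach exploits a monotonicity property of both $F_K(\Phi)$ and $F_K(\Phi,\Psi)$ in the set $K$. If $K\subset\overline{B(0,R)}$, then $K^c\supset\overline{B(0,R)}^c$, so the $1$-Lipschitz function $z\mapsto\dist(z,K^c)$ is pointwise dominated by $z\mapsto\dist(z,B(0,R)^c)$, giving $F_K(\Phi)\le F_R(\Phi)$ for every Radon measure $\Phi$. Moreover, every admissible test function in the supremum defining $F_K(\mu_i,\mu)$ (nonnegative, $1$-Lipschitz, supported in $K$) is automatically admissible in the supremum defining $F_R(\mu_i,\mu)$, since $\spt f\subset K\subset\overline{B(0,R)}$; hence $F_K(\mu_i,\mu)\le F_R(\mu_i,\mu)$. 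Since every compact set lies in some closed ball, the convergence statements ``for every compact $K$'' and ``for every $r>0$'' become equivalent.

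The forward direction is then immediate: if $\mu_i\to\mu$, specialize Definition~\ref{defn-GMTP.2}(ii) to the compact set $K=\overline{B(0,r)}$ to obtain $F_r(\mu_i,\mu)\to 0$. For the converse, assume $F_r(\mu_i,\mu)\to 0$ for every $r>0$. Given a compact $K$, choose $R$ with $K\subset\overline{B(0,R)}$. Condition (ii) of Definition~\ref{defn-GMTP.2} follows from $F_K(\mu_i,\mu)\le F_R(\mu_i,\mu)\to 0$. For condition (i), observe that $f_K(z):=\dist(z,K^c)$ is itself admissible in the supremum defining $F_K(\mu_i,\mu)$, so
\[
|F_K(\mu_i)-F_K(\mu)|=\left|\int f_K\,d\mu_i-\int f_K\,d\mu\right|\le F_K(\mu_i,\mu),
\]
which yields $\limsup_i F_K(\mu_i)\le F_K(\mu)\le\diam(K)\,\mu(K)<\infty$, using that $\mu$ is Radon and $K$ is compact.

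No serious obstacle arises; the argument rests entirely on the two containments (of test-function classes and of distance functions) that realize monotonicity in $K$. The only minor subtlety is the recognition that $\dist(\cdot,K^c)$ itself lies in the admissible class, which is precisely what allows the asymptotic smallness to transfer from the pseudo-distance $F_K(\mu_i,\mu)$ to the quantity $F_K(\mu_i)$ appearing in part (i) of Definition~\ref{defn-GMTP.2}.
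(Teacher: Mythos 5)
Your proof is correct and is essentially the standard argument for this fact; the paper itself gives no proof, citing \cite{M}, Lemma 14.13, whose proof rests on exactly the monotonicity in $K$ of both $F_K(\cdot)$ and $F_K(\cdot,\cdot)$ and the observation that $\dist(\cdot,K^c)$ is itself an admissible test function. (Only a cosmetic point: since $F_r$ is defined via the ball $B(0,r)$, one should pick $R$ with $K\subset B(0,R)$, which is always possible for compact $K$.)
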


We now introduce a scale invariant relative of $F_r$, which behaves well
under weak convergence and scaling.

\begin{defn}[\cite{P}, \S2]\label{defn-GMTP.4}
\begin{enumerate}
\item[i)]A set $\cM$ of non-zero Radon measures in $\RR^n$ will be called
a cone if $c\Psi\in\cM$ whenever $\Psi\in\cM$ and $c>0$.
\item[ii)]A cone $\cM$ will be called a $d$-cone if $T_{0,r}[\Psi]\in\cM$
whenever $\Psi\in\cM$ and $r>0$.
\item[iii)]Let $\cM$ be a $d$-cone, and $\Phi$ a Radon measure in $\RR^n$
such that for $s>0$, $0<F_s(\Phi)<\infty$ then we define the {\emph{distance}}
between $\Phi$ and $\cM$ by 
\end{enumerate}
\begin{equation}\label{eqn-GMTP.5}
d_s(\Phi,\cM)=\inf\left\{F_s\left(\frac{\Phi}{F_s(\Phi)},\Psi\right):\Psi\in\cM\mbox{
and } F_s(\Psi)=1\right\}.
\end{equation}
\begin{enumerate}
\item[]We also define
\end{enumerate}
\begin{equation}\label{eqn-GMTP.6}
d_s(\Phi,\cM)=1\mbox{ if }F_s(\Phi)=0\mbox{ or }F_s(\Phi)=+\infty.
\end{equation}
\end{defn}

\begin{remark}\label{rem-GMTP.2}
Note that if $\cM$ is a $d$-cone and $\Phi$ is a Radon measure 
\begin{enumerate}
\item[i)]$d_s(\Phi,\cM)\le 1$
\item[ii)]$d_s(\Phi,\cM)=d_1(T_{0,s}[\phi], \cM)$
\item[iii)]if $\mu=\mathop{\lim}\limits_{i\to\infty}\mu_i$ and $F_s(\mu)>0$
then $d_s(\mu,\cM)=\mathop{\lim}\limits_{i\to\infty}d_s(\mu_i,\cM)$. 
\end{enumerate}
In fact if $\mu=\mathop{\lim}\limits_{1\i\to\infty}\mu_i$ then by Lemma
\ref{lem-GMTP.1} $\mu_i\rightharpoonup\mu$ and for $s>0$
\begin{equation}\label{eqn-GMTP.7}
F_s(\mu_i)=\int(s-|z|)^+d\mu_i\to\int(s-|z|)^+d\mu= F_s(\mu).
\end{equation}
Thus without loss of generality we may assume that $F_s(\mu_i)>0$ (at least
for $i$ large enough). Since ${\lim}_{i\to\infty}\mu_i=\mu$ then
$\mathop{\limsup}\limits_{i\to\infty} F_s(\mu_i)<\infty$ and therefore
$F_s(\mu)<\infty$. Let $\Psi\in\cM$ such that $F_s(\Psi)=1$ then
\begin{eqnarray}\label{eqn-GMTP.8}
F_s\left(\frac{\mu}{F_s(\mu)},\Psi\right) & \le &
F_s\left(\frac{\mu}{F_s(\mu)}, \frac{\mu}{F_s(\mu)}\right) +
F_s\left(\frac{\mu_i}{F_s(\mu)}, \frac{\mu_i}{F_s(\mu_i)}\right) + F_s
\left(\frac{\mu_i}{F_s(\mu_i)} , \Psi\right) \\
& \le & \frac{1}{F_s(\mu)} F_s(\mu, \mu_i) +
F_s(\mu_i)\left|\frac{1}{F_s(\mu)} - \frac{1}{F_s(\mu_i)}\right| +
F_s\left(\frac{\mu_i}{F_s(\mu_i)}, \Psi\right). \nonumber
\end{eqnarray}
Thus for any $\Psi\in\cM$ with $F_s(\Psi)=1$ we have
\begin{equation}\label{eqn-GMTP.9}
d_s(\mu,\cM) \le \frac{1}{F_s(\mu)} F_s(\mu,\mu_i) + F_s(\mu)
\left|\frac{1}{F_s(\mu)} - \frac{1}{F_s(\mu_i)}\right| +
F_s\left(\frac{\mu}{F_s(\mu_i)}, \Psi\right)
\end{equation}
which implies
\begin{equation}\label{eqn-GMTP.10}
d_s(\mu,\cM)\le \frac{F_s(\mu,\mu_i)}{F_s(\mu)} + F_s(\mu) \left|
\frac{1}{F_s(\mu)} - \frac{1}{F_s(\mu_i)} \right| + d_s(\mu_i,\cM).
\end{equation}
Letting $i\to\infty$ and combining (\ref{eqn-GMTP.4}) and
(\ref{eqn-GMTP.7}) we have that
\begin{equation}\label{eqn-GMTP.11}
d_s(\mu,\cM) \le \liminf_{i\to\infty} d_s(\mu_i, \cM).
\end{equation}
A similar calculation done reversing the roles of $\mu$ and $\mu_i$ yields
the inequality
\begin{equation}\label{eqn-GMTP.12}
\limsup_{i\to\infty} d_s(\mu_i,\cM)\le d_s(\mu, \cM)
\end{equation}
which proves the statement iii) in Remark \ref{rem-GMTP.2}.
\end{remark}

\begin{defn}\label{defn-GMTP.5}
\begin{enumerate}
\item[i)]Let $\eta$ be a Radon measure in $\RR^n$. Let $x\in\RR^n$, a
non-zero Radon measure $\nu$ in $\RR^n$ is said to be a tangent measure of
$\eta$ at $x$ if there are sequences $r_k\searrow 0$ and $c_k>0$ such that
$\nu=\mathop{\lim}\limits_{k\to\infty}c_kT_{x,r_k}[\eta]$.
\item[ii)]The set of all tangent measures to $\eta$ at $x$ is denoted by
$\Tan(\eta,x)$.
\end{enumerate}
\end{defn}

\begin{remark}\label{rem-GMTP.3}
For $\eta$ a non-zero Radon measure and $x\in\RR^n$, $\Tan(\eta,x)$ is a
$d$-cone. Moreover $\{\nu\in\Tan(\eta,x):F_1(\nu)=1\}$ is closed under weak
convergence (see \cite{P} 2.3).
\end{remark}

\begin{defn}\label{defn-GMTP.6}
The basis of a $d$-cone $\cM$ of Radon measures is the set
$\{\Psi\in\cM:F_1(\Psi)=1\}$. We say that $\cM$ has a closed (respectively
compact) basis, if its basis is closed (respectively compact) in the
topology induced by the metric
\[
\sum^\infty_{p=0}2^{-p}\min\{1, F_p(\Phi, \Psi)\}
\]
defined for Radon measures $\Psi$ and $\Phi$.
\end{defn}

\begin{prop}[\cite{P} Proposition 1.12]\label{prop-GMTP.0}
The set of Radon measures on $\RR^n$ with the metric above is a complete
separable metric space.
\end{prop}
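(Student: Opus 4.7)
The plan has three parts: verify the metric axioms (routine), establish completeness by combining a uniform local mass bound with the equivalence of $F$-convergence and weak convergence from Lemma \ref{lem-GMTP.1}, and establish separability by an explicit approximation with finite combinations of Dirac masses at rational points. Verifying that $d$ is a metric is essentially bookkeeping: symmetry is immediate, the triangle inequality for each $F_p(\cdot,\cdot)$ passes through $\min\{1,\cdot\}$ and the summable weights $2^{-p}$, and $d(\Phi,\Psi)=0$ forces $F_p(\Phi,\Psi)=0$ for every $p$, which suffices since nonnegative compactly supported $1$-Lipschitz functions determine a Radon measure.

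For completeness, let $\{\mu_i\}$ be $d$-Cauchy. From $2^{-p}\min\{1,F_p(\mu_i,\mu_j)\}\le d(\mu_i,\mu_j)$ one obtains $F_p(\mu_i,\mu_j)\to 0$ for each fixed $p$; anchoring by the triangle inequality to $\mu_1$ then gives a uniform bound on $F_p(\mu_i)$ in $i$. The elementary estimate $\mu(B(0,r))\le F_{2r}(\mu)/r$, valid because $\dist(z,B(0,2r)^c)\ge r$ for $z\in B(0,r)$, therefore yields uniform local mass bounds on $\{\mu_i\}$. Standard compactness for Radon measures with uniformly bounded local masses extracts a subsequence $\mu_{i_k}\rightharpoonup\mu$ for some Radon $\mu$, which by Lemmas \ref{lem-GMTP.1} and \ref{lem-GMTP.2} means $F_r(\mu_{i_k},\mu)\to 0$ for every $r>0$. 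Since the series defining $d$ is uniformly dominated by $\sum 2^{-p}<\infty$, dominated convergence yields $d(\mu_{i_k},\mu)\to 0$, and the $d$-Cauchy property of $\{\mu_i\}$ promotes this to $d(\mu_i,\mu)\to 0$.

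For separability, I would take $\mathcal D$ to be the countable collection of finite positive-rational-weighted sums of Dirac masses at points of $\mathbb Q^n$. Given a Radon measure $\mu$ and $\e>0$, choose $N$ with $\sum_{p>N}2^{-p}<\e/2$, partition $B(0,N+1)$ into Borel cells $\{E_k\}$ of diameter less than $\eta$, pick a rational representative $x_k\in E_k$ and a positive rational weight $q_k$ close to $\mu(E_k)$ for each cell, and set $\nu=\sum_k q_k\delta_{x_k}\in\mathcal D$. A direct estimate, using that any admissible test function for $F_p$ is $1$-Lipschitz with $\|f\|_\infty\le p$, yields, uniformly in $p\le N$,
\[
F_p(\mu,\nu)\le \eta\,\mu(B(0,N+1))+N\sum_k|\mu(E_k)-q_k|,
\]
which can be made arbitrarily small by refining the partition and the rational rounding; summing in $p$ then gives $d(\mu,\nu)<\e$.

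The main obstacle I anticipate is the completeness step, specifically the extraction of a uniform local mass bound on individual $\mu_i$ from $d$-Cauchyness, which is directly only a statement about differences. The anchoring trick to a fixed $\mu_1$ together with the inequality $\mu(B(0,r))\le F_{2r}(\mu)/r$ is the cleanest route I see, and after that the rest follows from the equivalences between $F$-convergence, weak convergence, and convergence of each $F_r$-distance already at hand.
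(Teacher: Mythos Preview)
The paper does not supply a proof of this proposition; it is stated as a citation of \cite{P}, Proposition~1.12, and used as a black box. So there is no ``paper's own proof'' to compare against.

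Your argument is correct and follows the standard route. The metric axioms are routine as you say. For completeness, the chain
\[
\text{$d$-Cauchy} \Rightarrow \sup_i F_p(\mu_i)<\infty \text{ for each }p \Rightarrow \sup_i \mu_i(B(0,r))<\infty \text{ for each }r
\]
is exactly the right way to extract the local mass bound needed for weak compactness, and your estimate $\mu(B(0,r))\le F_{2r}(\mu)/r$ is the clean link. The passage from subsequential weak convergence back to $d$-convergence via Lemmas~\ref{lem-GMTP.1} and~\ref{lem-GMTP.2} and dominated convergence in the series is sound. For separability, the Dirac approximation is standard; your estimate
\[
F_p(\mu,\nu)\le \eta\,\mu(B(0,N+1))+N\sum_k|\mu(E_k)-q_k|
\]
follows from splitting $\int_{E_k}f\,d\mu-q_kf(x_k)=\int_{E_k}(f-f(x_k))\,d\mu+f(x_k)(\mu(E_k)-q_k)$ and using $|f|\le p\le N$ on its support, so it is valid. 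One should note in passing that a finite partition suffices (dyadic cubes of small side intersected with the ball), so the sums are finite and the rational rounding is unproblematic.
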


\begin{remark}\label{rem-GMTP.4}
\begin{enumerate}
\item [i)]As indicated in \cite{P} 1.9(4), Proposition 1.12 and Proposition 1.11
the notion of convergence in this metric coincides with the notion of weak
convergence of Radon measures.
\item[ii)] A $d$-cone of Radon measures in $\RR^n$ has a closed basis if and
only if it is a relatively closed subset of the set of Radon measures in
$\RR^n$.
\end{enumerate}
\end{remark}

\begin{prop}[\cite{P} Proposition 2.2]\label{prop-GMTP.1}
Let $\cM$ be a $d$-cone of Radon measures. $\cM$ has a compact basis if
and only if for every $\lambda\ge 1$ there is $\tau>1$ such that $F_{\tau
r}(\Psi)\le \lambda F_r(\Psi)$ for every $\Psi\in\cM$ and every $r>0$.
In this case $0\in\spt\Phi$ for all $\Psi\in\cM$.
\end{prop}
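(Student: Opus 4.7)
The plan is to reformulate the scaling condition on $\cM$ as a bound on the basis $K:=\{\Phi\in\cM:F_1(\Phi)=1\}$ via the $d$-cone property, and then to prove each direction of the equivalence. For the reformulation: given $\Psi\in\cM$ and $r>0$ with $F_r(\Psi)>0$, the measure $\Phi:=rT_{0,r}[\Psi]/F_r(\Psi)$ lies in $\cM$ (by the $d$-cone and scaling properties) and satisfies $F_1(\Phi)=1$, so $\Phi\in K$. A change of variables yields $F_\tau(\Phi)=F_{\tau r}(\Psi)/F_r(\Psi)$. Since every $\Phi\in K$ arises in this way (take $\Psi=\Phi$, $r=1$), the condition ``$F_{\tau r}(\Psi)\le\lambda F_r(\Psi)$ for every $\Psi\in\cM$ and every $r>0$'' is equivalent to $\sup_{\Phi\in K}F_\tau(\Phi)\le\lambda$.

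For $(\Rightarrow)$, assume $K$ is compact. Continuity of $F_p$ in the weak topology (equivalently, in the metric of Definition 2.6) together with compactness of $K$ yields a finite bound $C_p:=\sup_{\Phi\in K}F_p(\Phi)<\infty$ for each $p>0$, hence uniform mass bounds $\Phi(B(0,R))\le C_p/(p-R)$ on $K$ for $R<p$. This makes $(\tau,\Phi)\mapsto F_\tau(\Phi)$ jointly continuous on $(0,\infty)\times K$, so $M(\tau):=\max_{\Phi\in K}F_\tau(\Phi)$ is continuous in $\tau$ with $M(1)=1$; for any $\lambda>1$, select $\tau>1$ with $M(\tau)\le\lambda$ and invoke the reformulation.

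For $(\Leftarrow)$, fix $\lambda>1$ with associated $\tau>1$. If $F_r(\Psi)=0$ for some $\Psi\in\cM$ and $r>0$, iteration gives $F_{\tau^k r}(\Psi)\le\lambda^k F_r(\Psi)=0$ for every $k$, forcing $\Psi\equiv 0$, which contradicts $\cM$ consisting of non-zero measures; hence $0\in\spt\Psi$ for every $\Psi\in\cM$. The same iteration on $K$ gives $F_{\tau^k}(\Phi)\le\lambda^k$, bounding $\Phi(B(0,R))$ uniformly on $K$ for every $R>0$. By weak sequential compactness of Radon measures with uniformly bounded mass on compacts (plus diagonal extraction), every sequence in $K$ has a weakly convergent subsequence with limit $\mu$; by Lemma 2.1 this is convergence in the $F_K$ sense, and since $(1-|z|)^+$ is continuous and compactly supported, $F_1(\mu)=\lim F_1(\Phi_n)=1$, so $\mu\ne 0$. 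The main obstacle is the last step, concluding $\mu\in\cM$: compactness of $K$ forces closedness in the metric topology, so the statement tacitly assumes $\cM$ is closed under weak limits (equivalently, one may pass to the closure of $\cM$, which preserves the scaling hypothesis since each $F_r$ is continuous under weak convergence); this places $\mu\in K$ and finishes the proof that $K$ is compact.
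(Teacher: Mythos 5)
The paper does not prove Proposition \ref{prop-GMTP.1}; it is quoted from \cite{P}, so there is no internal argument to compare yours against. Your architecture is the natural one and is essentially sound: the rescaling identity $F_\tau\bigl(rT_{0,r}[\Psi]/F_r(\Psi)\bigr)=F_{\tau r}(\Psi)/F_r(\Psi)$ correctly reduces the scaling condition to a uniform bound on the basis $K$; the forward direction via joint continuity of $(\tau,\Phi)\mapsto F_\tau(\Phi)$ on $(0,\infty)\times K$ is correct; and the converse via the iterated bounds $F_{\tau^k}(\Phi)\le\lambda^k$ and weak compactness is correct. You are also right that the ``if'' direction is false as literally stated: the $d$-cone $\{c\H^{n-1}\res V:\ c>0,\ V\in G(n,n-1),\ V\ne V_0\}$ satisfies the scaling condition (indeed $F_{\tau r}=\tau^nF_r$ identically) but has a non-closed, hence non-compact, basis. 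Preiss's original Proposition 2.2 includes closedness of the basis as part of the condition equivalent to compactness; the paper's restatement drops it, harmlessly, since the paper only ever uses the ``only if'' direction together with a separately assumed closed basis (as in the proof of Theorem \ref{thm-GMTP.1}). Likewise ``$\lambda\ge1$'' should read ``$\lambda>1$'': since $r\mapsto F_r(\Psi)$ is nondecreasing and tends to $0$ as $r\to0$, no non-zero measure can satisfy the condition with $\lambda=1$, and your silent restriction to $\lambda>1$ is the correct reading.

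The one genuine gap is in the forward direction. Your reformulation is an equivalence only for pairs $(\Psi,r)$ with $F_r(\Psi)>0$; when $F_r(\Psi)=0$ the required inequality asserts $F_{\tau r}(\Psi)=0$, and nothing in your argument delivers this. You must therefore show that a compact basis forces $F_r(\Psi)>0$ for every $\Psi\in\cM$ and every $r>0$, i.e. $0\in\spt\Psi$ (which also settles the final sentence of the statement in this direction, rather than only deducing it from the scaling condition as you do). This does follow from compactness: if $\Psi\ne0$ but $r^\ast:=\sup\{r>0:\Psi(B(0,r))=0\}\in(0,\infty)$, then for $s>r^\ast$ the normalized dilate $\Phi_s:=sT_{0,s}[\Psi]/F_s(\Psi)$ lies in the basis, while $F_s(\Psi)\le(s-r^\ast)\,\Psi(B(0,s))$ gives
$F_2(\Phi_s)\ge\Phi_s(B(0,1))=s\,\Psi(B(0,s))/F_s(\Psi)\ge s/(s-r^\ast)\to\infty$ as $s\downarrow r^\ast$,
contradicting the boundedness of $F_2$ on the compact basis. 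With this inserted, your proof is complete.
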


The following theorem is in the same vein as Theorem 2.6 in \cite{P}.

\begin{thm}\label{thm-GMTP.1}
Let $\cF$ and $\cM$ be $d$-cones. Assume that  $\cF\subset\cM $, that $\cF$ is relatively closed with respect to the weak convergence of Radon measures and
that $\cM$ has a compact basis. Furthermore suppose that the following property holds:
\begin{equation*}
\left\{\begin{array}{l}
\exists\,\epsilon_0>0\mbox{ such that }\forall\,\epsilon\in(0,\epsilon_0)
\mbox{ there exists no }\mu\in\cM \mbox{ satisfying} \\
d_r(\mu,\cF)\le \epsilon\ \ \forall r\ge r_0>0\mbox{ and }
d_{r_0}(\mu,\cF)=\epsilon.\\
\end{array}\right.
\tag{P}
\end{equation*}
Then for a Radon measure $\eta$ and $x\in\spt\eta$ if
\begin{equation}\label{eqn-GMTP.13}
\Tan(\eta,x)\subset\cM\mbox{ and }\Tan(\eta,x)\cap\cF\ne\emptyset\ \mbox{ then }\Tan(\eta,x)\subset\cF.
\end{equation}
\end{thm}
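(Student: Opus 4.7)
The plan is to argue by contradiction, exploiting the continuity of the scale function $r\mapsto d_1(T_{x,r}[\eta],\cF)$ to construct, via an intermediate value / maximality argument, a tangent measure $\mu\in\cM$ that violates property (P).

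Suppose $\Tan(\eta,x)\not\subset\cF$. Since $\cF$ has a closed basis, any $\nu\in\Tan(\eta,x)\setminus\cF$ has $\delta:=d_1(\nu,\cF)>0$. Fix $\epsilon\in(0,\min\{\delta,\epsilon_0\})$ and set $h(r)=d_1(T_{x,r}[\eta],\cF)$. The first step is to establish that $h$ is continuous on $(0,\infty)$: as $r'\to r$ we have $T_{x,r'}[\eta]\to T_{x,r}[\eta]$ weakly by dominated convergence applied to compactly supported test functions, and Remark \ref{rem-GMTP.2}(iii) then yields $h(r')\to h(r)$, using that $F_1(T_{x,r}[\eta])>0$ for every $r>0$ since $x\in\spt\eta$.

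The hypothesis $\Tan(\eta,x)\cap\cF\ne\emptyset$ combined with the continuity of $d_1(\cdot,\cF)$ under weak convergence gives $\liminf_{r\to 0}h(r)=0$, while the existence of $\nu$ gives $\limsup_{r\to 0}h(r)\ge\delta>\epsilon$. The crucial step---and the main obstacle of the argument---is selecting a sequence $\sigma_k\to 0$ with $h(\sigma_k)=\epsilon$ together with an auxiliary sequence $T_k$ such that $T_k/\sigma_k\to\infty$ and $h\le\epsilon$ on the whole interval $[\sigma_k,T_k]$. The natural prescription is to pick $T_k\to 0$ from the sequence where $h\to 0$ and set
$$\sigma_k=\inf\{\,s>0: h(t)\le\epsilon\ \text{for all}\ t\in[s,T_k]\,\}.$$
Then $\sigma_k\le T_k$, $h(\sigma_k)=\epsilon$ by continuity, and $h\le\epsilon$ on $[\sigma_k,T_k]$; a diagonal/interleaving argument exploiting the scales where $h>\epsilon$ is what is needed to ensure $T_k/\sigma_k\to\infty$.

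Having produced such $\sigma_k$, let $\mu_k=T_{x,\sigma_k}[\eta]/F_1(T_{x,\sigma_k}[\eta])\in\Tan(\eta,x)\subset\cM$. Compactness of the basis of $\cM$ together with the standard diagonal construction for tangent measures yields a subsequence converging weakly to some $\mu\in\Tan(\eta,x)\subset\cM$. Since $d_r(\mu_k,\cF)=h(r\sigma_k)$, Remark \ref{rem-GMTP.2}(iii) gives $d_1(\mu,\cF)=\lim_k h(\sigma_k)=\epsilon$ and, for any fixed $r\ge 1$, the scale $r\sigma_k$ lies eventually in $[\sigma_k,T_k]$, whence $d_r(\mu,\cF)=\lim_k h(r\sigma_k)\le\epsilon$. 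Thus $\mu\in\cM$ realizes the forbidden configuration of (P) with $r_0=1$, a contradiction.
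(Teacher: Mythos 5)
Your strategy is the paper's own: locate critical scales $\sigma_k\to 0$ at which $h(\sigma_k)=d_1(T_{x,\sigma_k}[\eta],\cF)=\epsilon$ while $h\le\epsilon$ on $[\sigma_k,T_k]$ with $T_k/\sigma_k\to\infty$, pass to a limit, and contradict (P) with $r_0=1$. The problem is that the step you yourself call ``the crucial step --- and the main obstacle'' is left unproved, and it is precisely the heart of the theorem. Continuity of $h$ gives you $h(\sigma_k)=\epsilon$ and $h\le\epsilon$ on $[\sigma_k,T_k]$, but nothing so far prevents $h$ from climbing back above $\epsilon$ at a scale comparable to $T_k$, i.e.\ $\sigma_k\ge cT_k$ for all $k$; your appeal to ``a diagonal/interleaving argument exploiting the scales where $h>\epsilon$'' is not an argument, and it points at the wrong mechanism. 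What actually forces $T_k/\sigma_k\to\infty$ is the way the $T_k$ were produced: they carry a convergent sequence $\delta_k T_{x,T_k}[\eta]\to\widetilde\nu$ for one \emph{fixed} $\widetilde\nu\in\cF$. If along a subsequence $\sigma_k/T_k\to\tau\in(0,1]$, then
$\delta_k T_{x,\sigma_k}[\eta]=T_{0,\sigma_k/T_k}\bigl[\delta_k T_{x,T_k}[\eta]\bigr]\to T_{0,\tau}[\widetilde\nu]$,
and $T_{0,\tau}[\widetilde\nu]\in\cF$ because $\cF$ is a $d$-cone; hence $h(\sigma_k)\to 0$, contradicting $h(\sigma_k)=\epsilon$. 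This is exactly the paper's ``Claim: $\tau_i\to 0$'' in the proof of Theorem \ref{thm-GMTP.1}, and your proof is incomplete without it.

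There is a second gap in the limiting step. You extract a weak limit of $\mu_k=T_{x,\sigma_k}[\eta]/F_1(T_{x,\sigma_k}[\eta])$ by invoking ``compactness of the basis of $\cM$,'' but the $\mu_k$ themselves are not elements of $\cM$, and for a general Radon measure $\eta$ there is no a priori uniform bound on $\mu_k(B(0,s))$ for fixed $s$ (tangent measures at a point need not form a compact family --- that is why the hypothesis on $\cM$ is there at all). The paper devotes (\ref{eqn-GMTP.20})--(\ref{eqn-GMTP.25}) to deriving this bound: from $d_{\tau r}(T_{x,\sigma_k}[\eta],\cF)\le\epsilon$ one gets a comparison measure $\Psi$ with $F_{\tau r}(\Psi)=1$, and Proposition \ref{prop-GMTP.1} applied to $\Psi$ yields $F_r(T_{x,\sigma_k}[\eta])/F_{\tau r}(T_{x,\sigma_k}[\eta])\ge(1-\epsilon)/2$, whence geometric control of $F_{\tau^p}(T_{x,\sigma_k}[\eta])/F_1(T_{x,\sigma_k}[\eta])$ uniformly in $k$ and the mass bounds needed for compactness. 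You need to include this (or an equivalent) before you may pass to the limit and apply Remark \ref{rem-GMTP.2}(iii) to conclude $d_1(\mu,\cF)=\epsilon$ and $d_r(\mu,\cF)\le\epsilon$ for $r\ge 1$.
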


\begin{cor}\label{cor-GMTP.1}
Let $\cF$ and $\cM$ be $d$-cones. Assume 
that  $\cF\subset\cM $, that $\cF$ is relatively closed with respect to the weak convergence of Radon measures and
that $\cM$ has a compact basis. Furthermore suppose that there exists
$\epsilon_0>0$ such that if $d_r(\mu,\cF)<\epsilon_0$ for all $r\ge r_0>0$,
then $\mu\in\cF$. Then for a Radon measure $\eta$ and $x\in\spt\eta$ if
\begin{equation}\label{eqn-GMTP.14}
\Tan(\eta,x)\subset\cM\mbox{ and }\Tan(\eta,x)\cap\cF\ne\emptyset\ \mbox{ then }\Tan(\eta,x)\subset\cF.
\end{equation}
\end{cor}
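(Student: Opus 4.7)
The plan is to apply Theorem \ref{thm-GMTP.1} directly, the only new item needed being to verify property (P) from the corollary's hypothesis. All other assumptions ($\cF\subset\cM$, $\cF$ relatively closed under weak convergence, and $\cM$ having a compact basis) are identical, so the corollary reduces to showing that the implication ``$d_r(\mu,\cF)<\epsilon_0$ for all $r\ge r_0\Rightarrow \mu\in\cF$'' rules out the configuration forbidden by (P).

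First, observe that we may assume $\epsilon_0\le 1$: replacing $\epsilon_0$ by $\min\{\epsilon_0,1\}$ only strengthens the stated implication, since a smaller threshold makes the hypothesis $d_r(\mu,\cF)<\epsilon_0$ more restrictive. I then verify (P) by contradiction. Suppose (P) failed for this $\epsilon_0$; then there would exist $\epsilon\in(0,\epsilon_0)$, $r_0>0$, and $\mu\in\cM$ with $d_r(\mu,\cF)\le\epsilon$ for all $r\ge r_0$ and $d_{r_0}(\mu,\cF)=\epsilon$. Since $\epsilon<\epsilon_0$, the corollary's hypothesis yields $\mu\in\cF$. Because $\epsilon\in(0,1)$, the convention (\ref{eqn-GMTP.6}) forces $0<F_{r_0}(\mu)<\infty$; and since $\cF$ is a $d$-cone, $\mu/F_{r_0}(\mu)\in\cF$ with $F_{r_0}$-mass equal to $1$. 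Taking $\Psi:=\mu/F_{r_0}(\mu)$ in the infimum defining $d_{r_0}(\mu,\cF)$ in (\ref{eqn-GMTP.5}) yields $d_{r_0}(\mu,\cF)=0$, contradicting $d_{r_0}(\mu,\cF)=\epsilon>0$.

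Hence (P) is established and Theorem \ref{thm-GMTP.1} applies to give $\Tan(\eta,x)\subset\cF$ whenever $\Tan(\eta,x)\subset\cM$ and $\Tan(\eta,x)\cap\cF\ne\emptyset$. There is no real obstacle here; the argument is essentially a bookkeeping exercise. The only two points meriting care are aligning the strict versus non-strict inequalities appearing respectively in the corollary's hypothesis and in (P), and invoking the convention (\ref{eqn-GMTP.6}) to rule out the degenerate possibilities $F_{r_0}(\mu)=0$ or $F_{r_0}(\mu)=+\infty$ (which would give $d_{r_0}(\mu,\cF)=1\ne\epsilon$ anyway, thanks to the reduction $\epsilon_0\le 1$).
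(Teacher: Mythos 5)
Your proof is correct and follows exactly the route the paper intends: the paper merely remarks that the corollary's hypothesis is stronger than condition (P) and that ``a simple argument shows it,'' and your contradiction argument (including the reduction to $\epsilon_0\le 1$ and the use of the convention (\ref{eqn-GMTP.6}) to exclude the degenerate values of $F_{r_0}(\mu)$) is precisely that simple argument, after which Theorem \ref{thm-GMTP.1} applies verbatim.
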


Note that the condition stated in Corollary \ref{cor-GMTP.1} is stronger
than condition (P) and a simple argument shows it.
\medskip

\noindent{\bf Proof of Theorem \ref{thm-GMTP.1}}: We proceed by
contradiction; i.e. assume that $\Tan(\eta,x)\subset\cM$,
$\Tan(\eta,x)\cap\cF\ne\emptyset$ but there exists
$\nu\in\Tan(\eta,x)\backslash\cF$. Since $\cF$ is closed there exists
$\epsilon_1\in(0,\frac{1}{2}\min\{\epsilon_0, 1\})$ such that
$d_1(\eta,\cF)>2\epsilon_1$. Moreover there exist $s_i\searrow 0$ and
$c_i>0$ such that $c_iT_{x,s_i}[\eta]\to\nu$. Since
$\Tan(\eta,x)\cap\cF\ne\emptyset$ there also exist $\delta_i>0$ and
$r_i\searrow 0$ such that $\delta_iT_{x,r_i}[\eta]\to\widetilde\nu\in\cF$.
Thus for $i$ large enough
\begin{equation}\label{eqn-GMTP.15}
d_1(T_{x,r_i}[\eta], \cF)  = d_1(\delta_i T_{x,r_i}[\eta],\cF)<\epsilon_1, \  \hbox{ and }\  
d_1(T_{x,s_i}[\eta],\cF)  >  \epsilon_1
\end{equation}
Without loss of generality we may assume that $s_i<r_i$. Let
$\tau_i\in\left(\frac{s_i}{r_i},1\right)$ be the largest number such that
$\tau_ir_i=\rho_i$ satisfies
\begin{equation}\label{eqn-GMTP.16}
d_1(T_{x,\rho_i}[\eta],\cF)=\epsilon_1.
\end{equation}
Hence for all $\alpha\in\left({\tau_i}, 1\right)$
\begin{equation}\label{eqn-GMTP.17}
d_1(T_{x,\alpha r_i}[\eta],\cF)=d_{\alpha/\tau_i}(T_{x,\rho_i}[\eta],\cF)<\epsilon_1.
\end{equation}
We claim that $\tau_i\to 0$ as $i\to\infty$. In fact, otherwise there
exists a subsequence
$\tau_{i_k}\to\tau\in(0,1)$, and
$\delta_{i_k}T_{x,\rho_{i_k}}[\eta]=\delta_{i_k}T_{x,\tau_{i_k}r_{i_k}}[\eta]\to
T_{0,\tau}[\widetilde\nu]\in\cF$, which implies that
$d_1(T_{x,\rho_{i_k}}[\eta],\cF)\to 0$ as $i_k\to\infty$ which contradicts
(\ref{eqn-GMTP.16}). Therefore (\ref{eqn-GMTP.16}) and (\ref{eqn-GMTP.17})
yield
\begin{equation}\label{eqn-GMTP.18}
\lim_{i\to\infty}d_1(T_{x,\rho_i}[\eta],\cF)=\epsilon_1
\end{equation}
and for every $r>1$,
\begin{equation}\label{eqn-GMTP.19}
\limsup_{i\to\infty} d_r(T_{x,\rho_i}[\eta],\cF)\le \epsilon_1.
\end{equation}
Note that $F_r(T_{x,\rho_i}[\eta])=\frac{1}{\rho_i}
F_{B(x,r\rho_i)}(\eta)\in(0,\infty)$ for $x\in\spt\eta$. Moreover a simple
calculation shows that for $i$ large enough 
\begin{equation}\label{eqn-GMTP.20}
0<\frac{r}{2}\eta\left(B\left(x,\frac{r\rho_i}{2}\right)\right)\le
F_r(T_{x,\rho_i}[\eta])\le r\eta(B(x,r\rho_i)) \le r\eta(B(x,r))<\infty.
\end{equation}
Since $\epsilon_1<1$, $\lambda=\frac{2}{1+\epsilon_1}>1$ and by Proposition
\ref{prop-GMTP.1} there is $\tau>1$ so that $F_{\tau r}(\Psi)\le\lambda
F_r(\Psi)$ for every $\Psi\in\cM$ and every $r>0$. For $r\ge 1$ and $i$
large enough there is $\Psi\in\cM$ so that $F_{\tau r}(\Psi)=1$ and
\begin{equation}\label{eqn-GMTP.21}
F_r\left(\frac{T_{x,\rho_i}[\eta]}{F_{r\tau}(T_{x,\rho_i}[\eta])},\Psi\right)
\le F_{\tau r}
\left(\frac{T_{x,\rho_i}[\eta]}{F_{r\tau}(T_{x,\rho_i}[\eta])},
\Psi\right)\le\epsilon_1.
\end{equation}
Hence
\begin{equation}\label{eqn-GMTP.22}
\frac{F_r(T_{x,\rho_i}[\eta])}{F_{\tau r}(T_{x,\rho_i}[\eta])} \ge
F_r(\Psi)-\epsilon_1\ge \frac{1+\epsilon_1}{2} F_{\tau r}(\Psi)-\epsilon_1 =
\frac{1-\epsilon_1}{2}.
\end{equation}
Thus for $p=1,2,\ldots$ (\ref{eqn-GMTP.22}) yields
\begin{equation}\label{eqn-GMTP.23}
\limsup_{i\to\infty}
\frac{F_{\tau^p}(T_{x,\rho_i}[\eta])}{F_1(T_{x,\rho_i}[\eta])} \le
\left(\frac{1-\epsilon_1}{2}\right)^{-p}.
\end{equation}

Combining (\ref{eqn-GMTP.20}), (\ref{eqn-GMTP.23}) and i) in Remark
\ref{rem-GMTP.1} we conclude that for $p=1,2, \cdots$, $\tau>1$ (as above),
and $i$ large enough
\begin{equation}\label{eqn-GMTP.24}
\frac{T_{x,\rho_i}[\eta](B(0,\tau^p))}{F_1(T_{x,\rho_i}[\eta])} \le 
2\left(\frac{2}{1-\epsilon_1}\right)^p\tau^{-p} 
 \le  2\left(\frac{2}{(1-\epsilon_1)\tau}\right)^p. 
\end{equation}
Thus for any $s>0$, (\ref{eqn-GMTP.24}) ensures that
\begin{equation}\label{eqn-GMTP.25}
\limsup_{i\to\infty}
\frac{T_{x,\rho_i}[\eta](B(0,s))}{F_1(T_{x,\rho_i}[\eta])}<\infty.
\end{equation}
By the compactness theorem for Radon measures there exists a subsequence
$i_k$ such that $\frac{T_{x,\rho_{i_k}}[\eta]}{F_1(T_{x,\rho_i}[\eta])}$
converges to a Radon measure $\Phi\in\cM$ (as $\cM$ has a closed basis),
satisfying $F_1(\Phi)=1$. Therefore $F_r(\Phi)>0$ for $r\ge 1$.

Combining iii) in Remark \ref{rem-GMTP.2} with (\ref{eqn-GMTP.18}) and
(\ref{eqn-GMTP.19}) we have that
\begin{equation}\label{eqn-GMTP.26}
d_1(\Phi,\cF)=\epsilon_1
\end{equation}
and
\begin{equation}\label{eqn-GMTP.27}
d_r(\Phi,\cF)\le \epsilon_1\mbox{ for all }r>1.
\end{equation}
Since $\epsilon_1<\epsilon_0/2$ (\ref{eqn-GMTP.26}) and (\ref{eqn-GMTP.24})
contradict condition (P). This concludes the proof of Theorem
\ref{thm-GMTP.1}.\qed

We next recall a couple of results from \cite{P} and \cite{M}. They provide
additional information about $\Tan(\Phi,x)$ for a Radon measure $\Phi$ and
$x\in\spt\Phi$. The first result yields conditions that ensure that
$\Tan(\Phi,x)$ has a compact basis. As we will see these conditions are
satisfied by the harmonic measures considered in this paper. The second
result states that tangent measures to tangent measures of $\Phi$ are
tangent measures of $\Phi$.

\begin{thm}[\cite{P} Corollary 2.7]\label{thm-GMTP.2}
Let $\Phi$ be a Radon measure in $\RR^n$, and $x\in\spt\Phi$.
$\Tan(\Phi,x)$ has a compact basis if and only if
\begin{equation}\label{eqn-GMTP-doubling}
\mathop{\limsup}\limits_{r\to 0}
\frac{\Phi(B(x,2r))}{\Phi(B(x,r))}<\infty.
\end{equation}
\end{thm}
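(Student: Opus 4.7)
My plan is to invoke Proposition \ref{prop-GMTP.1}, which characterizes compactness of the basis of $\Tan(\Phi,x)$ via the existence of $\tau > 1$ and $\lambda < \infty$ with $F_{\tau r}(\Psi) \le \lambda F_r(\Psi)$ for all $\Psi \in \Tan(\Phi,x)$ and all $r > 0$, and additionally forces $0 \in \spt\Psi$ for all such $\Psi$. The bridge to $\Phi$ itself is the elementary sandwich $\tfrac{r}{2}\Phi(B(x,r/2)) \le F_{B(x,r)}(\Phi) \le r\Phi(B(x,r))$, combined with Remark \ref{rem-GMTP.1}(iii), which gives $F_s(T_{x,r}[\Phi]) = F_{B(x,sr)}(\Phi)/r$.

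For ``doubling $\Rightarrow$ compact basis'', the sandwich converts $\Phi(B(x,2r)) \le C_0\Phi(B(x,r))$ (for $r \le r_0$) into $F_{B(x,2r)}(\Phi) \le 4 C_0^2\, F_{B(x,r)}(\Phi)$ for $r \le r_0/2$. For $\Psi = \lim_k c_k T_{x,r_k}[\Phi] \in \Tan(\Phi,x)$ with $r_k \to 0$ and for any $s > 0$, the pre-tangent ratio satisfies $F_{B(x,2 s r_k)}(\Phi)/F_{B(x, s r_k)}(\Phi) \le 4C_0^2$ for $k$ large, and passing to the limit yields $F_{2s}(\Psi) \le 4 C_0^2\, F_s(\Psi)$; iterating, $F_{2^p s}(\Psi) \le (4 C_0^2)^p F_s(\Psi)$. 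For any $\lambda \ge 1$, take $\tau = 2^p$ with $(4C_0^2)^p \le \lambda$; Proposition \ref{prop-GMTP.1} concludes compactness of the basis.

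For ``compact basis $\Rightarrow$ doubling'', argue by contradiction: suppose $r_k \searrow 0$ with $\Phi(B(x,2r_k))/\Phi(B(x,r_k)) \to \infty$, and normalize $\Psi_k := T_{x,r_k}[\Phi]/\Phi(B(x,2r_k))$. Then $\Psi_k(B(0,2)) = 1$ while $\Psi_k(B(0,1)) = \Phi(B(x,r_k))/\Phi(B(x,2r_k)) \to 0$. If a subsequence converges weakly on all of $\RR^n$ to a Radon measure $\Psi$, then by the portmanteau theorem $\Psi(\overline{B(0,2)}) \ge \limsup \Psi_k(B(0,2)) = 1$, so $\Psi \ne 0$, and $\Psi(B(0,1)) \le \liminf \Psi_k(B(0,1)) = 0$, so $\Psi$ vanishes on a neighborhood of $0$ and hence $0 \notin \spt\Psi$. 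Since $r_k \to 0$, we have $\Psi \in \Tan(\Phi,x)$, contradicting the final assertion of Proposition \ref{prop-GMTP.1}.

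I expect the main obstacle to be securing a weakly convergent subsequence of $\{\Psi_k\}$ with a nonzero Radon limit on all of $\RR^n$, i.e., the tightness $\sup_k \Psi_k(B(0,R)) < \infty$ for each $R > 2$. This tightness is not immediate from the normalization; it must be extracted from the compact-basis hypothesis itself. The idea is that should tightness fail at some scale $R > 2$, a re-normalization of $\Psi_k$ at scale $R$ produces a further pre-tangent sequence whose weak accumulation points lie in $\Tan(\Phi,x)$ (since $r_k \to 0$) and therefore must obey the bound $F_{\tau r}(\Psi) \le \lambda F_r(\Psi)$ of Proposition \ref{prop-GMTP.1}; transferring this control back to $\{\Psi_k\}$ via a diagonal argument delivers the uniform bound on $\Psi_k(B(0,R))$ needed to complete the proof.
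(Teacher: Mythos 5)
The paper offers no proof of this statement --- it is quoted from \cite{P} (Corollary 2.7) --- so your argument has to stand on its own. Your first direction (doubling implies compact basis) is essentially right: the sandwich $\frac{r}{2}\Phi(B(x,r/2))\le F_{B(x,r)}(\Phi)\le r\Phi(B(x,r))$ turns the doubling bound into $F_{B(x,2r)}(\Phi)\le 4C_0^2F_{B(x,r)}(\Phi)$ for small $r$, and this passes to every $\Psi\in\Tan(\Phi,x)$ via (\ref{eqn-GMTP.7}). The one flaw is the final appeal to Proposition \ref{prop-GMTP.1}, whose hypothesis demands, for \emph{every} $\lambda\ge 1$, a $\tau>1$ with $F_{\tau r}(\Psi)\le\lambda F_r(\Psi)$; your iteration only supplies the pairs $(\tau,\lambda)=(2^p,(4C_0^2)^p)$ and can never certify any $\lambda<4C_0^2$. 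Either quote the ``there exist $\tau>1$ and $\lambda<\infty$'' form of Preiss's Proposition 2.2, or finish directly: the iterated bound gives $\Psi(B(0,R))\le R^{-1}F_{2R}(\Psi)\le C(R)F_1(\Psi)$ uniformly on the basis, hence weak precompactness, and the basis of $\Tan(\Phi,x)$ is closed by Remark \ref{rem-GMTP.3}.

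The genuine gap is in the converse. Your contradiction scheme (manufacture $\Psi\in\Tan(\Phi,x)$ with $\Psi(B(0,1))=0$, contradicting the conclusion $0\in\spt\Psi$ of Proposition \ref{prop-GMTP.1}) is the right target, but the whole difficulty is the tightness $\sup_k\Psi_k(B(0,R))<\infty$, which you correctly name and then do not establish. The repair you sketch is circular: you propose to bound $\Psi_k(B(0,R))$ by arguing that weak accumulation points of a re-normalization at scale $R$ lie in $\Tan(\Phi,x)$ and therefore obey $F_{\tau r}\le\lambda F_r$ --- but the existence of such accumulation points is again a tightness statement, now at scales larger than $R$, and tightness can fail at every scale simultaneously; no hypothesis on $\Tan(\Phi,x)$ bites until a tangent measure has actually been produced, and producing one along a prescribed sequence of radii is precisely what is at stake. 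A symptom that something nontrivial is missing: if $\Tan(\Phi,x)$ were empty its basis would be vacuously compact, so this direction of the theorem subsumes the assertion that $\Tan(\Phi,x)\ne\emptyset$ for every $x\in\spt\Phi$ (Preiss's Theorem 2.5), itself a nontrivial result. The missing ingredient is Preiss's stopping-time selection of intermediate scales $s_k\in[r_k,\rho_0]$ with $s_k\to 0$, chosen so that the normalized blow-ups $T_{x,s_k}[\Phi]/F_1(T_{x,s_k}[\Phi])$ have uniformly controlled mass on compacta and hence subconverge to a nonzero tangent measure which still omits a neighborhood of the origin. Without that (or an equivalent device) the ``compact basis $\Rightarrow$ doubling'' direction is not proved.
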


\begin{thm}[\cite{M}, Theorem 14.16]\label{thm-GMTP.3}
Let $\Phi$ be a Radon measure in $\RR^n$, $\Phi$ a.e. $a\in\RR^n$, if
$\Psi\in\Tan(\Phi,a)$ then
\begin{enumerate}
\item[i)]$T_{x,\rho}[\Psi]\in\Tan(\Phi,a)$ for all $x\in\spt\Psi$ and all
$\rho>0$
\item[ii)]$\Tan(\Psi,x)\subset\Tan(\Phi,a)$ for all $x\in\spt\Psi$.
\end{enumerate}
\end{thm}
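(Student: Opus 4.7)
The plan is to combine the algebraic identity $T_{x,s}\circ T_{a,r}=T_{a+rx,\,rs}$ with a diagonal argument in order to reduce both (i) and (ii) to a common statement --- that a weak limit of rescalings of $\Phi$ centered at points $y_k\to a$ with scales $\rho_k\searrow 0$ should belong to $\Tan(\Phi,a)$ --- and then to prove that common statement at $\Phi$-a.e.\ $a$ by a measure-differentiation argument. The last step is where the real work lies.

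For part (i), with $\Psi=\lim_{k\to\infty}c_kT_{a,r_k}[\Phi]$, $x\in\spt\Psi$ and $\rho>0$, I would push both sides forward under the fixed affine map $T_{x,\rho}$; using the identity and continuity of pushforward under a fixed continuous proper map,
\[ c_kT_{a+r_kx,\,r_k\rho}[\Phi]=T_{x,\rho}\bigl[c_kT_{a,r_k}[\Phi]\bigr]\to T_{x,\rho}[\Psi]. \]
For part (ii), with $\Xi=\lim_{j\to\infty}d_jT_{x,s_j}[\Psi]$ and $s_j\searrow 0$, the same identity gives, for each fixed $j$,
\[ c_kd_jT_{a+r_kx,\,r_ks_j}[\Phi]\to d_jT_{x,s_j}[\Psi]\ \text{ as }k\to\infty. \]
A diagonal extraction, using the separable complete metric generating weak convergence (Proposition \ref{prop-GMTP.0}) as a quantitative gauge, then produces $k(j)\to\infty$ such that
\[ c_{k(j)}d_jT_{a_j,\rho_j}[\Phi]\to\Xi,\qquad a_j:=a+r_{k(j)}x\to a,\quad \rho_j:=r_{k(j)}s_j\searrow 0. \]
In both cases the target measure has been realized as a weak limit of rescalings of $\Phi$ at shifted centres $a_j\to a$ with scales $\rho_j\searrow 0$.

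\textbf{Main obstacle.} What remains is to upgrade such a "shifted" blow-up limit to a genuine tangent measure at $a$ itself. The difficulty is that expressing $T_{a_j,\rho_j}[\Phi]$ in terms of $T_{a,\rho_j}[\Phi]$ requires a pushforward by translation of magnitude $(a_j-a)/\rho_j=x/s_j$, which diverges as $j\to\infty$; so on any fixed ball, $T_{a,\rho_j}[\Phi]$ and $T_{a_j,\rho_j}[\Phi]$ are asymptotically unrelated, and one cannot naively replace the centre $a_j$ by $a$. What is actually required is only that, at $\Phi$-a.e.\ $a$, one can find \emph{some} new sequence of scales $\tilde\rho_j\searrow 0$ and weights $\tilde c_j>0$ (not equal to $\rho_j$, $c_{k(j)}d_j$ in general) so that $\tilde c_jT_{a,\tilde\rho_j}[\Phi]\to\Xi$. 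I would prove this by a Besicovitch/Vitali-type covering argument: at any "bad" point $a$ the discrepancy between blow-ups at $a$ and at $a_j$ must persist on some definite scale, providing a family of balls along which $\Phi$ exhibits a measurable defect; differentiation of $\Phi$ against that family then forces the bad set to have $\Phi$-measure zero. This Fubini-style a.e.\ lemma --- that the local structure of $\Phi$ is compatible with small translations at $\Phi$-a.e.\ $a$ --- is the genuinely new input needed beyond the algebraic/diagonal manipulations, and it is where I expect the main difficulty to lie.
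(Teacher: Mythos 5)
The paper offers no proof of this statement---it is quoted from Mattila's book (Preiss's Theorem 2.12)---so I am judging your attempt on its own. Your algebraic identity $T_{x,s}\circ T_{a,r}=T_{a+rx,rs}$ and the diagonal extraction are correct, and you have rightly located the crux in an a.e.\ ``recentering'' lemma proved by differentiation of measures. But the common statement you reduce both parts to---that at $\Phi$-a.e.\ $a$, \emph{any} nonzero weak limit of $c_kT_{y_k,\rho_k}[\Phi]$ with $y_k\to a$ and $\rho_k\searrow0$ lies in $\Tan(\Phi,a)$---is false, not merely hard. Take $\Phi=\H^1\res(\RR\times\{0\}\cup\{0\}\times\RR)$ in $\RR^2$: at any $a=(t,0)$ with $t\neq0$ one has $\Tan(\Phi,a)=\{c\,\H^1\res(\RR\times\{0\})\}$, yet with $y_k=(t,1/k)$ and $\rho_k=1/k$ the rescalings converge to a multiple of $\H^1$ restricted to the affine line $\RR\times\{-1\}$, which is not a tangent measure at $a$. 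The true a.e.\ statement requires the displacement of the centre to be controlled by the scale, $|y_k-a|=O(\rho_k)$, with $y_k\in\spt\Phi$ and the sequence coupled to an actual blow-up at $a$. Your treatment of (i) respects this (the ratio is the bounded $|x|/\rho$), but your diagonalization for (ii) produces exactly the forbidden regime $|a_j-a|/\rho_j=|x|/s_j\to\infty$, as you yourself observe; no covering argument can rescue a false statement at that generality. The repair is standard: prove (i) first, then deduce (ii) by recentering \emph{before} diagonalizing---by (i) each $T_{x,s_j}[\Psi]$ already belongs to $\Tan(\Phi,a)$, hence so does $d_jT_{x,s_j}[\Psi]$ by the cone property, and since nonzero limits of tangent measures are tangent measures (a diagonal argument using Lemma \ref{lem-GMTP.2}, equivalently the closedness in Remark \ref{rem-GMTP.3}), $\Xi\in\Tan(\Phi,a)$.

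Even for part (i), the ``Besicovitch/Vitali'' paragraph is a gesture rather than a proof. The actual mechanism is: decompose the bad set into countably many pieces $A_{p,q}$ on which the failure of $T_{x,1}[\Psi]\in\Tan(\Phi,a)$ is uniformly quantified, say $F_p\bigl(cT_{a,r}[\Phi],T_{x,1}[\Psi]\bigr)\ge 1/p$ for all $c>0$ and all $r<1/q$ (this quantification uses Lemma \ref{lem-GMTP.2} and the separability of Proposition \ref{prop-GMTP.0}, and is what turns ``the discrepancy persists on a definite scale'' into a measurable, uniform condition); then at a $\Phi$-density point $a$ of $A_{p,q}$, the hypothesis $x\in\spt\Psi$ together with $c_kT_{a,r_k}[\Phi]\to\Psi$ forces points $a+r_kx_k\in\spt\Phi$ with $x_k\to x$, and comparing the quantified failure with the convergence $c_kT_{a+r_kx_k,r_k}[\Phi]\to T_{x,1}[\Psi]$ yields the contradiction. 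Without the countable quantified decomposition and the density-point step, the a.e.\ lemma---which is the entire content of the theorem---remains unproved.
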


Finally we present a couple of results which will be used later in the
paper.

\begin{defn}\label{defn-GMTP.7}
A Radon measure $\o$ in $\RR^n$ is said to be locally doubling if for every
compact set $K\subset\spt \o$ there exists $C=C_k\ge 1$ and $R_K=R>0$ such
that for $Q\in K$, and $s\in(0,R)$
\begin{equation}\label{eqn-GMTP.28}
\o(B(Q,2s))\le C\o(B(Q,s))
\end{equation}
\end{defn}

\begin{lem}\label{lem-GMTP.3}
Let $\o$ be a locally doubling measure in $\RR^n$. Let $\Psi$ be a non-zero
Radon measure with $\Psi\in\Tan(\o,Q)$. There exists a sequence of positive
numbers $\{r_i\}_{i\ge 1}$ with $\mathop{\lim}\limits_{i\to\infty}r_i=0$
such that ${r_i}^{-1}(\spt\o-Q)$ converges to $\spt\Psi$ in the
Hausdorff distance sense uniformly on compact sets.
\end{lem}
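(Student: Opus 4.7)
The plan is to use the sequences $r_i \searrow 0$ and $c_i > 0$ realizing $\Psi \in \Tan(\omega, Q)$, and set $\mu_i := c_i T_{Q, r_i}[\omega]$, so that $\mu_i \to \Psi$ by Definition \ref{defn-GMTP.5}. Since $c_i > 0$, one has $\spt\mu_i = r_i^{-1}(\spt\omega - Q)$, so the claim reduces to showing that for every compact $K \subset \RR^n$ and every $\epsilon > 0$, for all large $i$ both (a) every $y \in \spt\Psi \cap K$ lies within $\epsilon$ of some point of $\spt\mu_i$, and (b) every $x \in \spt\mu_i \cap K$ satisfies $\dist(x, \spt\Psi) < \epsilon$.

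Part (a) will use only weak convergence (Lemma \ref{lem-GMTP.1}): the open ball $B(y, \epsilon)$ has $\Psi(B(y, \epsilon)) > 0$, so the Portmanteau-type characterization of weak convergence gives $\liminf_i \mu_i(B(y, \epsilon)) \geq \Psi(B(y, \epsilon)) > 0$, whence $\spt\mu_i \cap B(y, \epsilon) \neq \emptyset$ for $i$ large. The local doubling hypothesis plays no role here.

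Part (b) is where local doubling is essential. I would argue by contradiction: after passing to subsequences, suppose there exist $x_i \in \spt\mu_i \cap K$ with $x_i \to x \in K$ and $\dist(x, \spt\Psi) \geq \epsilon$. Then $\Psi(\overline{B(x, \epsilon/2)}) = 0$, and upper semicontinuity of mass on closed sets under weak convergence gives $\mu_i(B(x_i, \epsilon/4)) \to 0$. On the other hand, since $\Psi \neq 0$, one may fix $R > 0$ with $\Psi(B(0, R)) > 0$, so $\mu_i(B(0, R)) \geq \tfrac{1}{2}\Psi(B(0, R))$ for large $i$. The conclusion will come from iterating doubling: because $x_i \in \spt\mu_i$ one has $Q + r_i x_i \in \spt\omega$, and since $Q + r_i x_i \to Q$ these base points lie in a fixed compact subset of $\spt\omega$ for $i$ large, so Definition \ref{defn-GMTP.7} supplies a constant $C$ and a threshold $R_0 > 0$ with $\omega(B(Q + r_i x_i, 2\rho)) \leq C\,\omega(B(Q + r_i x_i, \rho))$ whenever $\rho \leq R_0$. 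Choosing $N$ with $2^N(\epsilon/4) \geq R + |x| + 1$ and using $r_i \to 0$ to guarantee $r_i \cdot 2^N(\epsilon/4) \leq R_0$ for $i$ large, one iterates the rescaled doubling $N$ times to get
\[
\mu_i(B(x_i, \epsilon/4)) \geq C^{-N}\mu_i(B(x_i, 2^N(\epsilon/4))) \geq C^{-N}\mu_i(B(0, R)) \geq \tfrac{1}{2}C^{-N}\Psi(B(0, R)),
\]
a positive constant independent of $i$, contradicting $\mu_i(B(x_i, \epsilon/4)) \to 0$.

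The main obstacle is the bookkeeping in this last step: translating the local doubling of $\omega$ (which holds only at points of $\spt\omega$ and only at physical scales below $R_K$) into uniform iterated doubling for $\mu_i$ at the moving base points $x_i$, over a finite collection of rescaled scales. The feature that makes it work is $r_i \to 0$: the physical scales $r_i \cdot 2^p(\epsilon/4)$ for $p = 0, \ldots, N$ all tend to zero, hence eventually fall inside the doubling window around the limit point $Q$.
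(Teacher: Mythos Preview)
Your proof is correct and follows essentially the same route as the paper's: weak convergence alone gives the inclusion $\spt\Psi\subset\liminf\spt\mu_i$, while local doubling (iterated finitely many times at the shrinking physical scales $r_i\cdot 2^p(\epsilon/4)$) gives the reverse inclusion by preventing mass near $x_i\in\spt\mu_i$ from vanishing. The only cosmetic difference is that the paper first renormalizes $c_i$ to $c/\o(B(Q,r_i))$ (via Remark 14.4(3) in \cite{M}, using doubling) and then compares $\o(B(Z_i,\tfrac{rr_i}{2}))$ directly to the normalizing ball $\o(B(Q,r_i))$, whereas you keep the arbitrary $c_i$, compare $\mu_i(B(x_i,\epsilon/4))$ to $\mu_i(B(0,R))$ by doubling, and use weak convergence once more to lower-bound the latter.
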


\begin{proof}
Since $\Psi\in\Tan(\o,Q)$, and $\o$ is locally doubling by Remark (3) in
14.4 \cite{M} we have that there are a sequence $r_i\downarrow 0$ and a
positive constant $c$ such that $\Psi=c\mathop{\lim}\limits_{i\to\infty}\o
(B(Q,r_i))^{-1}T_{Q,r_i}[\o]$. Let $X=\mathop{\lim}\limits_{i\to\infty}
X_i\in B(0,R_0)$ where $X_i=r^{-1}_i(Z_i-Q)$ with $Z_i\in\spt\o$. For $r\in
(0,1)$ there exists $i_0\ge 1$ such that for $i\ge i_0$,
$|X-X_i|<\frac{r}{2}$ and $|Z_i-Q|\le r_i|X_i|\le r_i(|X|+1)\le R_0+1$.
Since $\o$ is locally doubling there exists $C_0\ge 1$ and $R>0$ such that
for $P\in B(Q,2(R_0+1))$ and $s<R$, $\o(B(P,2s))\le C_0\o(B(P,s))$. Thus for
$r\le \min\{R,1\}$ and $i$ large enough so that $r_i(R_0+1)<R$ we have

\begin{eqnarray}\label{eqn-GMTP.29}
\frac{T_{Q,r_i}[\o](B(x,r))}{\o(B(Q,r_i))} & = & \frac{\o(B(Q+r_iX,
rr_i))} {\o(B(Q,r_i))} \\
& \ge & 
\frac{\o(B(Q+r_iX_i, r_i(r-|X-X_i|))}{\o (B(Q,r_i))} \nonumber \\
& \ge & 
\frac{\o(B(Z_i, \frac{rr_i}{2}))}{\o(B(Z_i,r_i(R_0+2)))}\ge C_0^k>0. \nonumber 
\end{eqnarray}
where $k\in\NN$ is such that $2^{-k}(R_0+2)\le \frac{r}{2}<2^{-(k-1)}(R_0+2)$.
Thus
\begin{eqnarray}\label{eqn-GMTP.30}
\Psi(B(x,2r)) & \ge & \Psi(\overline{B(x,r)}) \\
& \ge & \limsup_{i\to\infty} c
\frac{T_{a,r_i}[\o](\overline{B(x,r))}}{\o(B(Q,r_i))}\ge C_0^k>0.
\nonumber
\end{eqnarray}
Thus (\ref{eqn-GMTP.30}) ensures that $x\in\spt\Psi$.
This shows that $\mathop{\lim}\limits_{i\to\infty} r^{-1}_i
(\spt\o-Q)\subset\spt\Psi$. To show the opposite inclusion assume that
$X\not\in\mathop{\lim}\limits_{i\to\infty} r^{-1}_i(\spt\o-Q)$ there exists
$\{r_{i_k}\}\subset\{r_i\}$, $r_{i_k}\searrow 0$ such that
$d(X,{r^{-1}_{i_k}}(\spt\o-Q))\ge \epsilon_0$. Thus $B(x,\frac{\epsilon_0}{2})\cap
{r^{-1}_{i_k}}(\spt\o-Q)=\emptyset$. For $\varphi\in
C^\infty_c(B(X,\frac{\epsilon_0}{2}))$ we have
\begin{equation}\label{eqn-GMTP.31}
\int\varphi d\Psi = C\lim_{i_k\to\infty} \frac{1}{\o(B(Q,r_{i_k}))}
\int\varphi\left(\frac{Y-Q}{r_{i_k}}\right)d\o=0,
\end{equation}
which ensures that $X\not\in\spt\Psi$.
\qed
\end{proof}

The following lemma is a simple geometric measure theory fact which allows
us to give an estimate on the Hausdorff dimension of sets which approach $(n-1)$-planes
locally.

\begin{lem}\label{lem-size.2}
Let $\Sigma\subset \RR^n$ be such that $\forall Q\in\Sigma$
\begin{equation}\label{poitwise-flat}
\lim_{r\to 0}\beta_{\Sigma}(Q,r)=0\  \hbox{ where }\ 
\beta_{\Sigma}(Q,r)=\inf_{L\in G(n,n-1)}\sup_{y\in B(Q,r)\cap \Sigma}\frac{d(y,L)}{r}.
\end{equation}
Then
\begin{equation}\label{eqn-size.8}
\dim_\H\Sigma\le n-1.
\end{equation}
\end{lem}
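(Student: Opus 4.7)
The plan is to show that $\H^s(\Sigma) = 0$ for every $s > n-1$, which directly gives $\dim_\H \Sigma \le n-1$.

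First I would decompose $\Sigma$ into countably many pieces on which the flatness condition is quantified uniformly. For positive integers $k$ and $m$, set
\[
\Sigma_{k,m} = \{Q \in \Sigma : \beta_\Sigma(Q,r) \le 1/k \text{ for all } r \in (0, 1/m]\}.
\]
The pointwise hypothesis $\lim_{r\to 0}\beta_\Sigma(Q,r) = 0$ forces every $Q \in \Sigma$ to lie in $\Sigma_{k,m}$ for \emph{every} $k$ with an appropriate $m = m(Q,k)$; in particular $\Sigma = \bigcup_{k \ge k_0}\bigcup_{m \ge 1} \Sigma_{k,m}$ for any fixed threshold $k_0$. By countable subadditivity of $\H^s$ it suffices to prove $\H^s(\Sigma_{k,m}) = 0$ for all $k$ larger than some $k_0$ depending on $s$.

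Fix $s > n-1$ and choose $\delta_0 \in (0,1/2)$ small enough that $C_n \cdot 2^s \cdot \delta_0^{s-n+1} \le 1/2$, where $C_n$ is the dimensional constant giving the minimum number of $\delta_0 r$-balls required to cover the slab $\{x \in B(Q,r) : \dist(x,L) \le \delta_0 r\}$ around any $(n-1)$-plane $L$ --- essentially $C_n \delta_0^{-(n-1)}$ since an $(n-1)$-disk of radius $r$ needs that many $\delta_0 r$-disks. Set $k_0 = \lceil 1/\delta_0 \rceil$, so that for $k \ge k_0$, $Q \in \Sigma_{k,m}$, and $r \le 1/m$, the inclusion of $\Sigma \cap B(Q,r)$ in such a slab with thickness $r/k \le \delta_0 r$ is available.

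The core step is an iteration starting from an arbitrary ball $B(Q_0, r_0)$ with $Q_0 \in \Sigma_{k,m}$ and $r_0 \le 1/m$. The covering above gives at most $N := C_n \delta_0^{-(n-1)}$ balls of radius $\delta_0 r_0$ covering $\Sigma_{k,m} \cap B(Q_0, r_0)$; in each one that meets $\Sigma_{k,m}$ pick a new center $Q_1 \in \Sigma_{k,m}$ and inflate to $B(Q_1, 2\delta_0 r_0)$. Since $2\delta_0 < 1$ the new radius is still $\le 1/m$, so the same flatness bound applies at $Q_1$ and we can iterate. After $j$ iterations we have at most $N^j$ balls of radius $\le (2\delta_0)^j r_0$, whose total contribution to the Hausdorff $s$-premeasure is
\[
N^j \bigl(2(2\delta_0)^j r_0\bigr)^s \;=\; (2r_0)^s \bigl(C_n \, 2^s \, \delta_0^{s-n+1}\bigr)^j \;\le\; (2r_0)^s \cdot 2^{-j},
\]
which tends to $0$ while the diameters shrink to zero. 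Hence $\H^s(\Sigma_{k,m} \cap B(Q_0, r_0)) = 0$, and covering $\Sigma_{k,m}$ by countably many such starting balls concludes the argument.

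The only genuine obstacle is the bookkeeping of the iteration: one must re-center at points of $\Sigma_{k,m}$ (not arbitrary points of the geometric covering) in order to have the flatness hypothesis available at the next scale, and one must choose $\delta_0$ small enough that the geometric-series factor $C_n 2^s \delta_0^{s-n+1}$ is below $1$ --- which is possible precisely because $s > n-1$, at the cost of restricting to $k \ge k_0 = \lceil 1/\delta_0 \rceil$, a restriction that the countable decomposition absorbs.
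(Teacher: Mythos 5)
Your proof is correct and follows essentially the same route as the paper's: a countable decomposition of $\Sigma$ into pieces where the flatness is uniform below a fixed scale, followed by an iterated covering of the flat slab by roughly $\delta^{-(n-1)}$ balls of radius $\delta r$, which contracts the Hausdorff $s$-premeasure for $s>n-1$. The paper packages the iteration as the single-step inequality $\H^{s}_{5\e\d}\le\frac14\H^{s}_{\d}$ and fixes the flatness parameter $\e$ before determining the admissible exponents, whereas you fix $s$ first and choose $\delta_0$ accordingly, but these are only cosmetic differences.
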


The following proof is an adaptation of the argument used in \cite{Si} to
prove Lemma 3 in Chapter 3, \S 4.

\begin{proof}
Let $Q\in\Sigma$. Given $\e>0$ there exists $r_{Q,\e}>0$ such that for 
$r<r_{Q,\e}$ there exists an $(n-1)$ plane $L(Q,r)$
through $Q$ so that
\begin{equation}\label{eqn-size.9}
\Sigma\cap B(Q,r)\subset (L(Q,r)\cap B(Q,r):\e r).
\end{equation}
Note that for $\e>0$
\begin{equation}\label{eqn-size.10}
\Sigma=\bigcup^{\infty}_{j=1}\Sigma_j\mbox{ where
}\Sigma_j=\{Q\in\Sigma:r_{Q,\e}>\frac{10}{2^j}\}.
\end{equation}
Without loss of generality we may assume that $0\in\Sigma$. Let $k\in\NN$. For
$j_0\ge 1$ cover $\Sigma_{j_0}\cap B(0,k)$ by sets $\{C_s\}_{s\ge 1}$ of
diameter less than $\d>0$. Choosing $\d<\frac{1}{2^{j_0}}$ we can ensure
that each such set is contained in a ball of center $Q\in\Sigma_{j_0}$ and
radius $r_Q=\diam C_s$ for some $s$ with $Q\in C_s$ less than $\d$, i.e.
$r_Q<\frac{1}{10}r_{Q,\e}$. Note that  $B(Q,r_Q)\cap L(Q,r_Q)$ can be
covered by $N\e^{-n+1}$ balls $\{B_l\}_l$ centered in $L(Q,r_Q)$ with
radius $5\e r_Q$ and such that the balls of same center and radius $\e r_Q$
are disjoint. Here $N>0$ is an absolute constant that only depends on $n$. Thus
for $\g>0$
\begin{equation}\label{eqn-size.11}
\sum_l(\diam\, B_l)^{n-1+\g}  =  (5\e r_Q)^{n-1+\g} N\e^{-n+1} 
 =  5^{n-1}\e^\g N r^{n-1+\g}_Q. 
\end{equation}
Note that if $\g\ge -\frac{\ln (4N5^{n-1})}{\ln (5\e)}$ then 
\begin{equation}\label{eqn-size.12}
\sum_l(\diam B_o)^{n-1+\g} \le \frac{1}{4} r_Q^{n-1+\g}.
\end{equation}
Thus for $\d<\frac{1}{2^{j_0}}$
\begin{equation}\label{eqn-size.13}
\H^{n-1+\g}_{5\e\d} (\Sigma_{j_0} \cap B(0,k)) \le \frac{1}{4}
\H^{n-1+\g}_\d (\Sigma_{j_0} \cap B(0,k)).
\end{equation}
Letting $\d\to 0$ we conclude that $\forall\,j\in\NN$, and $\g\ge -\ln(4N5^{n-1})/\ln (5\e)$ 
\begin{equation}\label{eqn-size.14}
\H^{n-1+\g} (\Sigma_j\cap B(0,k))=0.
\end{equation}
Thus (\ref{eqn-size.10}) ensures that
\begin{equation}\label{eqn-size.15}
\H^{n-1+\g} (\Sigma\cap B(0,k))=0
\end{equation}
for $\g\ge -\frac{\ln (4N5^{n-1})}{\ln (5\e)}$ and $\e>0$. Letting $\e\to 0$ we
conclude that
$\H^{n-1+\g}(\Sigma\cap B(0,k))=0$,for all $\g>0$ and hence
$\H^{n-1+\g}(\Sigma)=0$ also. This implies that
$\dim_\H\Sigma\le n-1$.\qed
\end{proof}

\section{Two sided locally non-tangentially accessible domains}

\begin{defn}\label{prel-defn1}
A domain $\O\subset\RR^n$ is admissible if 
\begin{itemize}
\item $\O^+=\O$ and 
$\O^-={\rm int}\,\O^c$ are regular for the Dirichlet problem.
\item $\po^+=\po^-=\po$.
\item There exist points $X^\pm\in\O^\pm$
such that for every point $Q\in\del\O$ there exists 
$0<R<\min\{\delta(X^+),\delta(X^-)\}$ satisfying 
$u\in C^{0}(B(Q,R))\cap \H^1(B(Q,R))$,
where $\delta(X)={\rm{dist}}\, (X,\del\O))$ and 
\begin{equation}\label{prel-eqn1}
u(X)=G_+(X,X^+)-G_-(X,X^-)
\end{equation}
and $G_\pm(-,X^\pm)$ denote the Green function of $\O^\pm$ with pole at
$X^\pm$.
\end{itemize}
\end{defn}

{\bf Notation:} If $\O$ is admissible so is ${\rm int}\, \O^c$.
Let $\O$ be an admissible domain we denote by $\omega^\pm$ the
harmonic measure of $\O^\pm$ with pole $X^\pm$. Note that in this case
$u^\pm=G_\pm(-,X^\pm)$.

The monotonicity formula of Alt, Caffarelli and Freidman plays a 
role in this work. We recall several of the results which will be used
later.

\begin{thm}{\bf{\cite{ACF}}}\label{prel-acf1}
Let $\O$ be an admissible domain. Then $Q\in\del\O$ there exists 
$0<R<\min\{\delta(X^+),\delta(X^-)\}$ such that the quantity
\begin{equation}\label{prel-gamma-1}
\gamma(Q,r)=\left(\frac{1}{r^2}\int_{B(Q,r)}\frac{|\nabla u^+|^2}{|X-Q|^{n-2}}\, dX \right)
\cdot \left(\frac{1}{r^2}\int_{B(Q,r)}\frac{|\nabla u^-|^2}{|X-Q|^{n-2}}\, dX \right)
\end{equation}
is an increasing function of $r$ for $r\in (0,R)$ and $\gamma(Q,R)<\infty$.
\end{thm}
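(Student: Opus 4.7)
The strategy is to reduce the statement directly to the classical Alt--Caffarelli--Friedman monotonicity formula in \cite{ACF}, applied to the pair $(u^+,u^-) = (G_+(\cdot,X^+), G_-(\cdot,X^-))$ on the ball $B(Q,R)$ furnished by the admissibility hypothesis. Recall that ACF treats two nonnegative continuous subharmonic functions on $B(Q,R)$ with disjoint supports which vanish at the center, and concludes that precisely the quantity $\gamma(Q,r)$ of \eqref{prel-gamma-1} is nondecreasing on $(0,R)$.

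First I would verify the hypotheses of ACF for $u^+$ and $u^-$. Nonnegativity and disjoint support are immediate from the definitions of the Green functions: $u^\pm\ge 0$, $u^\pm$ is supported in $\overline{\Omega^\pm}$, and $\partial\Omega^+=\partial\Omega^-=\partial\Omega$ with $u^+=u^-=0$ there (the latter using regularity of $\Omega^\pm$ for the Dirichlet problem, which also gives $u^\pm(Q)=0$). Continuity of each $u^\pm$ on $B(Q,R)$ follows from admissibility: since $u=u^+-u^-$ is continuous and the two functions have disjoint supports with common value zero on $\partial\Omega$, we have $u^+=\max(u,0)$ and $u^-=\max(-u,0)$ pointwise on $B(Q,R)$, so continuity and $H^1$ regularity are inherited from $u$. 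For subharmonicity, $u^+$ is harmonic in the open set $\Omega^+\cap B(Q,R)$ and identically zero on the open set $\Omega^-\cap B(Q,R)$, continuous across $\partial\Omega$ and nonnegative; this forces $\Delta u^+\ge 0$ as a distribution on $B(Q,R)$, and similarly for $u^-$.

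Once the hypotheses are checked, monotonicity is the output of the ACF argument. Finiteness $\gamma(Q,R)<\infty$ is then a consequence of the (well-known) explicit bound that the same argument provides, namely $\gamma(Q,R)\le C_n\,\|u^+\|_{L^\infty(B(Q,R))}^2\,\|u^-\|_{L^\infty(B(Q,R))}^2$; the $L^\infty$ norms are finite because $R<\min\{\delta(X^+),\delta(X^-)\}$ keeps us uniformly away from the singularities of the Green functions at $X^\pm$. The only real obstacle in transcribing the ACF argument to this setting is verifying the distributional subharmonicity across $\partial\Omega$; however the continuity built into the definition of admissibility, combined with the vanishing of $u^\pm$ on $\partial\Omega$ and harmonicity in each of the open sets $\Omega^\pm\cap B(Q,R)$, handles this cleanly.
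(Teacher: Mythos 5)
Your proposal is correct and matches the paper's approach: the paper states this result as a direct citation of Alt--Caffarelli--Friedman, and the reduction you describe (checking that $u^\pm=G_\pm(\cdot,X^\pm)$, extended by zero, are nonnegative, continuous, subharmonic on $B(Q,R)$ with disjoint supports and vanish at $Q$, all of which the admissibility hypothesis is designed to guarantee) is exactly the intended verification. The finiteness of $\gamma(Q,R)$ via the bound by $\frac{1}{r^n}\int_{B(Q,2r)}(u^\pm)^2$ is also precisely the computation the paper carries out immediately afterwards in deriving Lemma \ref{prel-lem1}.
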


Note that the ACF-monotonicity formula ensures that
\begin{equation}\label{prel-gamma2}
\gamma(Q)=\lim_{r\rightarrow 0}\gamma(Q,r)
\end{equation}
exists and it is a non-negative finite quantity. A combination of the results of 
Alt-Caffarelli- Friedman, Beckner-Kenig-Pipher and Brothers-Ziemer
asserts that if $\gamma(Q)>0$ then all blow-ups of the boundary at $Q$
are $(n-1)$-planes (see \cite{ACF}, \cite{BKP} and \cite{BZ}). This last fact will not be used here.

Our immediate goal is to estimate $\gamma(Q,r)$ in terms of $\omega^\pm$ and
$u^\pm$. Let $\varphi\in C^\infty_c(\RR^n)$. The harmonic extension $v_\varphi$ 
of 
$\varphi$ to $\O$ (i.e $\Delta v_\varphi=0$ in $\O$ and 
$v_\varphi=\varphi$ in $\del\O$)
satisfies
\beq\label{prel-eqn4}
v_\varphi(Y)=-\int \lan\nabla G(Y,X),\nabla\varphi(X)\ran\, dX
\eeq
Let $R<\min\{\delta(X^+),\delta(X^-)\}$, $2r<R$, $Q\in\del\O$ 
and $\varphi=1$ on $B(Q,\frac{3r}{2})$, $\varphi=0$ on $B(Q, 2r)^c$,
$0\le \varphi \le 1$ and $|\nabla \varphi|\le \frac{C}{r}$. By the maximum 
principle $v^\pm_\varphi(X^\pm)\ge \omega^\pm(B(Q,r))$. Here  $v^\pm_\varphi$
denotes the harmonic extension of $\varphi$ to $\O^\pm$. Hence by 
(\ref{prel-eqn4}) we have
\bea\label{prel-eqn5}
\omega^\pm(B(Q,r)) &\le&  \frac{C}{r}\int_{B(Q,2r)\backslash B(Q,r)}
|\nabla u^\pm|\\
 &\le &\frac{C}{r}\left(\int_{B(Q,2r)\backslash B(Q,r)}
\frac{|\nabla u^\pm|^2}{|X-Q|^{n-2}}\right)^{1/2}
\left(\int_{B(Q,2r)\backslash B(Q,r)}|X-Q|^{n-2}\right)^{1/2},\nonumber
\eea
which yields
\beq\label{prel-eqn6}
\frac{\omega^\pm(B(Q,r))}{r^{n-1}}\le C\left(\frac{1}{r^2}
\int_{B(Q,2r)\backslash B(Q,r)}
\frac{|\nabla u^\pm|^2}{|X-Q|^{n-2}}\right)^{1/2}
\eeq
and
\beq\label{prel-eqn7}
\frac{\omega^+(B(Q,r))}{r^{n-1}}\cdot \frac{\omega^-(B(Q,r))}{r^{n-1}}\le C
\gamma(Q,2r)^{1/2}.
\eeq

Note that $\Delta (u^\pm)^2= 2|\nabla u^\pm|^2\ge 0$ because $u^\pm$ is zero
on the support of the measure $\Delta u^\pm$. 
Using Cacciopoli's inequality as well as the fact that $(u^\pm)^2$ is 
subharmonic (and therefore the averages over spheres are increasing 
as a function of the radius) we have for $Q\in\po$ 
that
\bea\label{prel-eqn8}
\int_{B(Q,r)}\frac{|\nabla u^\pm|^2}{|X-Q|^{n-2}}&=&\frac{1}{2}
\int_{B(Q,r)}\frac{\Delta(u^\pm)^2}{|X-Q|^{n-2}}\\
&=& (u^\pm)^2(Q) + \frac{1}{r^{n-2}}
\int_{\del B(Q,r)}u^\pm\frac{\del u^\pm}{\del r} +\frac{n-2}{2r^{n-1}}
\int_{\del B(Q,r)}(u^\pm)^2\nonumber\\
&= & \frac{1}{r^{n-2}}\int_{B(Q,r)}|\nabla u^\pm|^2 +
\frac{n-2}{2r^{n-1}}
\int_{\del B(Q,r)}(u^\pm)^2\nonumber\\
&\le & C \frac{1}{r^n}\int_{B(Q,2r)} (u^\pm)^2 +\frac{n-2}{2r^{n-1}}
\int_{\del B(Q,r)}(u^\pm)^2\nonumber\\
&\le & C \frac{1}{r^n}\int_{B(Q,2r)} (u^\pm)^2 +\frac{n-2}{2r^{n}}
\int_r^{2r}\left(\int_{\del B(Q,s)}(u^\pm)^2\right)\, ds\nonumber\\
&\le & C \frac{1}{r^n}\int_{B(Q,2r)} (u^\pm)^2\nonumber
\eea

We have proved the following result:

\begin{lem}\label{prel-lem1}
Given $\O\subset \RR^n$ an admissible domain. 
Let $R<\min\{\delta(X^+),\delta(X^-)\}$, $4r<R$, $Q\in\del\O$, then
\beq\label{prel-eqn9}
\frac{\omega^\pm(B(Q,r))}{r^{n-1}}\le 
C\left(\frac{1}{r^2}\int_{B(Q,2r)}
\frac{|\nabla u^\pm|^2}{|X-Q|^{n-2}}\right)^{1/2}
\le C\left(\frac{1}{r^{n+2}}\int_{B(Q,4r)}(u^\pm)^2\right)^{1/2}
\eeq
Therefore
\bea\label{prel-eqn10}
\frac{\omega^+(B(Q,r))}{r^{n-1}}\cdot \frac{\omega^-(B(Q,r))}{r^{n-1}}&\le& C
\gamma(Q,2r)^{1/2}\\
\gamma(Q,2r)&\le & C
\left(\frac{1}{r^{n+2}}\int_{B(Q,4r)}(u^+)^2\right)\cdot
\left(\frac{1}{r^{n+2}}\int_{B(Q,4r)}(u^-)^2\right)\nonumber
\eea
\end{lem}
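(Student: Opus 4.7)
The plan is to derive the chain (\ref{prel-eqn9})–(\ref{prel-eqn10}) in two stages: first, a duality argument relating $\omega^\pm(B(Q,r))$ to a weighted Dirichlet integral of $u^\pm$, and second, a Caccioppoli-plus-subharmonicity argument that upgrades this to an $L^2$ bound on $u^\pm$. Taking the product of the $+$ and $-$ pieces and matching against the definition (\ref{prel-gamma-1}) of $\gamma(Q,r)$ will then produce (\ref{prel-eqn10}).

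For the first stage I would fix $\varphi \in C_c^\infty(B(Q,2r))$ with $\varphi \equiv 1$ on $B(Q, 3r/2)$, $0 \le \varphi \le 1$, and $|\nabla \varphi| \le C/r$. By the maximum principle in $\O^\pm$, the harmonic extension $v_\varphi^\pm$ satisfies $v_\varphi^\pm(X^\pm) \ge \omega^\pm(B(Q,r))$. The Green function representation (\ref{prel-eqn4}) writes $v_\varphi^\pm(X^\pm)$ as an integral of $\langle \nabla u^\pm, \nabla \varphi\rangle$ which, since $\nabla \varphi$ is supported in $B(Q,2r)\setminus B(Q,3r/2)$ with $|\nabla\varphi| \le C/r$, is dominated by $(C/r)\int_{B(Q,2r)\setminus B(Q,r)}|\nabla u^\pm|\,dX$. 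Cauchy–Schwarz applied to the splitting $|\nabla u^\pm| = \bigl(|\nabla u^\pm|\,|X-Q|^{-(n-2)/2}\bigr)\cdot |X-Q|^{(n-2)/2}$, together with the crude estimate $\int_{B(Q,2r)}|X-Q|^{n-2}\,dX \le C r^{2n-2}$, then produces the first half of (\ref{prel-eqn9}).

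For the second stage, I would exploit that $(u^\pm)^2$ is subharmonic on all of $\RR^n$: since $\Delta u^\pm = -\omega^\pm$ is supported on $\po$ where $u^\pm \equiv 0$, the distributional identity $\Delta(u^\pm)^2 = 2|\nabla u^\pm|^2 + 2u^\pm \Delta u^\pm$ collapses to $2|\nabla u^\pm|^2 \ge 0$. Applying Green's identity on $B(Q,r)$ against the fundamental solution $|X-Q|^{-(n-2)}$ then expresses $\int_{B(Q,r)} |\nabla u^\pm|^2\,|X-Q|^{-(n-2)}\,dX$ in terms of boundary integrals of $u^\pm \partial_r u^\pm$ and $(u^\pm)^2$, as displayed in (\ref{prel-eqn8}). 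Caccioppoli on $u^\pm$ controls the first boundary term by $r^{-n}\int_{B(Q,2r)}(u^\pm)^2$; for the second, the monotonicity of spherical averages of the subharmonic function $(u^\pm)^2$ lets me average over radii in $[r,2r]$ and absorb everything into $r^{-n}\int_{B(Q,2r)}(u^\pm)^2$, giving the second half of (\ref{prel-eqn9}).

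Finally, multiplying the $+$ and $-$ versions of the first inequality in (\ref{prel-eqn9}) and comparing with (\ref{prel-gamma-1}) yields the first line of (\ref{prel-eqn10}); chaining the two halves of (\ref{prel-eqn9}) (after doubling the radius from $r$ to $2r$ as required in the statement) produces the second. The main technical subtlety is the subharmonicity of $(u^\pm)^2$ across $\po$, which hinges on the cancellation $u^\pm \Delta u^\pm = 0$ in the sense of distributions because $u^\pm$ and $\Delta u^\pm$ have disjoint supports; once this is verified, the remaining steps are standard calculus applications of Caccioppoli, Green's identity, and Cauchy–Schwarz.
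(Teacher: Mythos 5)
Your proposal follows essentially the same route as the paper: the maximum principle plus the Green representation and Cauchy--Schwarz for the first inequality in (\ref{prel-eqn9}), then the subharmonicity of $(u^\pm)^2$ (via the vanishing of $u^\pm$ on the support of $\Delta u^\pm$), Green's identity against the fundamental solution, Caccioppoli, and the monotonicity of spherical averages for the second, with (\ref{prel-eqn10}) obtained by taking products and adjusting the radius. The argument is correct as outlined.
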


\begin{remark}
For $Q_0\in\po$, $r_0<\frac{R}{8}$ and $Q\in 
B\left(Q_0,\frac{r_0}{2}\right)\cap\po$, we have for $r<\frac{r_0}{2}$,
\begin{eqnarray}\label{prel-eqn13A}
\gamma(Q,r) & \le & \gamma\left(Q,\frac{r_0}{2}\right) \\
& \le & C\left(\frac{1}{r^{n+2}_0}\int_{B(Q,r_0)}(u^+)^2\right) \cdot
\left(\frac{1}{r_0^{n+2}} \int_{B(Q,r_0)}(u^-)^2 \right)\nonumber
\end{eqnarray}
Moreover
\begin{equation}\label{prel-eqn13B}
\frac{\o^+ (B(Q,r))}{r^{n-1}} \cdot \frac{\o^-(B(Q,r))}{r^{n-1}}  \le 
C\gamma(Q,2r)^{\frac{1}{2}} \le
\frac{C}{r^{n+2}_0}\|u\|^2_{L^2(B(Q_0,4r_0))}.
\end{equation}
Here $C$ only depends on $n$. Thus Beurling's inequality (see \cite{GM} Chapter IV, Theorem 6.2 and 
Chapter VI, proof of Theorem 6.3) holds in
higher dimensions. 
\end{remark}

\begin{defn}\label{prel-defn600}
A domain $\O\subset\RR^{n}$ satisfies the 
\emph{corkscrew condition}
if for each compact set $K\subset\RR^{n}$ there exists $R>0$
such that for $Q\in\po\cap K$ and $r\in(0,R]$ there exists
there exists $A=A(Q,r)\in\O$ such that $M^{-1}r<|A-Q|<r$ and
$d(A,\po)>M^{-1}r$.  If $\O$ is unbounded we require that $R=\infty$.
\end{defn}

\begin{defn}\label{defn1.2}
A domain $\O\subset\RR^n$ is locally non-tangentially accessible (NTA) if
\begin{enumerate}
\item $\O^\pm$ satisfy the corkscrew condition.
\item \emph{Harnack Chain Condition}. 
Given a compact set $K\subset\RR^n$ there exists $R=R_K>0$ and $M=M_K>1$
such that if $\e>0$, and $X_1,
X_2\in\O\cap B(Q,\frac{r}{4})$ for some $Q\in\po\cap K$, $r<R$, 
$d(X_j,\po)>\e$ and $|X_1-X_2|<2^k\e$, then there
exists a Harnack chain from $X_1$ to $X_2$ of length $Mk$ and such that the
diameter of each ball is bounded below by $M^{-1}\min\{\dist(X_1,\po),
\dist(X_2,\po)\}$.  If $\O$ is unbounded we require that $R=\infty$.
\end{enumerate}
\end{defn}

If $\O$ is bounded and locally NTA then $\O$ is NTA as defined in
\cite{JK}.

In particular since most of the results concerning the behaviour of non
negative harmonic measures on NTA domains are local, suitable modifications
hold for locally NTA domains. We briefly summarize the most important ones
in the current context.

\begin{lem}[\cite{JK}, Lemma 4.1]\label{lem1.2}
Let $\O$ be a locally NTA domain. Given a compact set $K\subset\RR^n$ there
exists $\beta>0$ such that for all $Q\in\po\cap K$, $0<2r<R_K$, and every
positive harmonic function $u$ in $\O\cap B(Q, 2r)$, which vanishes
continuously on $B(Q,2r)$ then for $X\in\O\cap B(Q,r)$
\begin{equation}\label{eqn1.16}
u(X)\le C\left(\frac{|X-Q|}{r}\right)^\beta\sup\{u(Y):Y\in\p B(Q,2r)\cap \O\}.
\end{equation}
here $C$ only depends on $K$.
\end{lem}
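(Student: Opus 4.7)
The plan is to reduce the H\"older bound to the one-step decay estimate
\begin{equation}\label{plan:decay}
\sup_{\O \cap B(Q,r/2)} u \le \theta \sup_{\O \cap B(Q,r)} u, \qquad \theta = \theta(K)\in(0,1),
\end{equation}
valid for every $Q\in\po\cap K$, $r<R_K$, and $u$ as in the lemma. Iterating \eqref{plan:decay} $k$ times gives $\sup_{\O\cap B(Q,r/2^k)}u\le\theta^k\sup_{\O\cap B(Q,r)}u$. For a given $X\in\O\cap B(Q,r)$ one chooses $k$ with $r/2^{k+1}<|X-Q|\le r/2^k$ and sets $\beta := \log_2(1/\theta)$; this converts the geometric decay into the stated $|X-Q|^\beta / r^\beta$ bound, up to a factor $2^\beta$. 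The supremum on the right-hand side of the lemma is then controlled via the extension $\tilde u$ of $u$ by zero across $\po\cap B(Q,2r)$: a routine case analysis of the sub-mean-value inequality (trivial where $\tilde u=0$, equality on balls inside $\O$, and comparison with the Poisson extension on balls straddling $\po$) shows that $\tilde u$ is continuous and subharmonic on $B(Q,2r)$, so the maximum principle yields $\sup_{\O\cap B(Q,r)}u \le \sup_{\p B(Q,2r)}\tilde u = \sup_{\p B(Q,2r)\cap\O}u$.

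The decay \eqref{plan:decay} is proved by a barrier construction driven by the corkscrew condition for $\O^-$ (part (1) of Definition \ref{defn1.2}). Normalize $\sup_{\O\cap B(Q,r)} u = 1$, and select $A\in\O^-$ with $M^{-1}r/2 < |A-Q| < r/2$ and $d(A,\po) > M^{-1}r/2$, where $M=M_K$ is the NTA constant for $K$. Then $\overline{B(A, M^{-1}r/2)}\subset\O^-\cap B(Q,r)$ and $\tilde u \equiv 0$ on this ball. Let $\phi$ solve the Dirichlet problem on $D := B(Q,r)\setminus\overline{B(A, M^{-1}r/2)}$ with boundary values $1$ on $\p B(Q,r)$ and $0$ on $\p B(A, M^{-1}r/2)$. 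Rescaling by $r$ and using compactness of the allowable positions of the removed ball (whose center lies in a fixed annulus around the origin determined only by $M$), one obtains
\begin{equation}\label{plan:phibound}
\sup_{\overline{B(Q,r/2)}\cap D}\phi \le \theta < 1
\end{equation}
for some $\theta=\theta(M)$. Since $\tilde u$ is subharmonic on $D$ with $\tilde u \le 1 = \phi$ on $\p B(Q,r)$ and $\tilde u = 0 = \phi$ on $\p B(A, M^{-1}r/2)$, the maximum principle gives $\tilde u \le \phi$ throughout $D$, and restricting to $\O\cap B(Q,r/2)$ yields \eqref{plan:decay}.

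The main obstacle is the uniform bound \eqref{plan:phibound}: this is the sole step at which the geometry of $\O^-$ enters the argument, and $\theta$ encodes the fact that the corkscrew hole in $\O^c$ near $Q$ forces the harmonic measure of $\p B(Q,r)$ to be bounded strictly below $1$ at points well inside $B(Q,r)$. The Harnack chain hypothesis plays no role in this particular estimate; only the corkscrew condition for $\O^-$ is essential for the H\"older decay from the boundary.
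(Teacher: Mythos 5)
The paper offers no proof of this lemma --- it is quoted verbatim from Jerison--Kenig \cite{JK}, Lemma 4.1 --- and your argument (extension by zero to a subharmonic function, a barrier on $B(Q,r)$ minus the exterior corkscrew ball giving one-step geometric decay, then iteration to convert $\theta^k$ into $(|X-Q|/r)^\beta$) is precisely the standard proof given there, and it is correct. The only cosmetic point is that the decay estimate and the comparison $\tilde u\le\phi$ on $\partial B(Q,r)$ should be run with suprema over closed balls (or slightly enlarged ones) so that the boundary values used in the maximum principle are actually controlled by the normalization; this is routine and does not affect the argument.
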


\begin{lem}[\cite{JK}, Lemma 4.4]\label{lem1.3}
Let $\O$ be a locally NTA domain. Given a compact set $K\subset\RR^n$ for
$Q\in \po\cap K$ and $0<2r<R_K$. If $u\ge 0$ is a harmonic function in
$\O\cap B(Q,4r)$ and $u$ vanishes continuously on $B(Q,2r)\cap \po$ then
\begin{equation}\label{eqn1.17}
u(Y)\le Cu(A(Q,r)),
\end{equation}
for all $Y\in B(
Q,r)\cap\O$. Here $C$ only depends on $K$.
\end{lem}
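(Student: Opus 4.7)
The plan is to combine the maximum principle, the Harnack chain condition, and the boundary Hölder decay of Lemma \ref{lem1.2}. Because $u$ is harmonic in $\O \cap B(Q,4r)$ and vanishes continuously on $\partial\O \cap B(Q,2r)$, the maximum principle applied on the bounded open set $\O \cap B(Q,r)$ reduces the task to producing a uniform bound $u(Y) \leq C u(A(Q,r))$ for $Y \in \O \cap B(Q,r)$, with $C$ depending only on the local NTA constant $M_K$ associated to $K$ and the exponent $\beta$ from Lemma \ref{lem1.2}.

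I would dichotomize by the distance from $Y$ to the boundary. Fix a small parameter $\eta>0$ to be chosen later. In the \emph{interior regime} $d(Y,\partial\O) \geq \eta r$, both $Y$ and $A(Q,r)$ lie in $\O \cap B(Q,2r)$ at distance to $\partial\O$ at least $\min\{\eta,M_K^{-1}\}r$, with mutual distance $\leq 2r$. The Harnack chain condition then supplies a chain connecting them whose length is bounded by a constant depending only on $\eta$ and $M_K$, and iterating the classical interior Harnack inequality along the chain yields $u(Y) \leq C_\eta u(A(Q,r))$. In the \emph{boundary regime} $d(Y,\partial\O) < \eta r$, let $Q_Y \in \partial\O$ be a nearest boundary point to $Y$; then $|Q_Y-Q| < (1+\eta)r$, so for $\eta$ sufficiently small the ball $B(Q_Y, r/2)$ is contained in $B(Q,2r)$, and therefore $u$ vanishes continuously on $B(Q_Y,r/2) \cap \partial\O$. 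Applying Lemma \ref{lem1.2} at $Q_Y$ on scale $r/4$ gives $u(Y) \leq C (d(Y,\partial\O)/r)^{\beta} \sup_{\partial B(Q_Y, r/2) \cap \O} u \leq C \eta^{\beta} \sup_{\O \cap B(Q, 2r)} u$.

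The main obstacle is closing the argument without circularity, since the boundary-regime estimate controls $u(Y)$ by a supremum on a larger ball rather than by $u(A(Q,r))$ directly. The standard resolution, originating in \cite{JK}, is to run the two-case dichotomy on a nested family of concentric balls $B(Q, r_j)$ with $r_j = (1+2^{-j})r$: at each scale the corkscrew $A(Q, r_j)$ is comparable to $A(Q,r)$ through a Harnack chain of bounded length, so $u(A(Q,r_j)) \leq C u(A(Q,r))$ uniformly in $j$. Choosing $\eta$ so small that $C\eta^{\beta} < 1/2$, the boundary-regime bound transfers only half of the sup from scale $r_{j-1}$ to scale $r_j$, while the interior-regime bound contributes a uniformly bounded multiple of $u(A(Q,r))$. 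Summing the resulting geometric series then yields $\sup_{\O \cap B(Q,r)} u \leq C u(A(Q,r))$ with $C$ depending only on $K$, which is the desired inequality.
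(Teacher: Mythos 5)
The paper gives no proof of this lemma---it is quoted verbatim from \cite{JK}, Lemma~4.4, with the remark that the argument localizes---so the only issue is whether your argument stands on its own. Your two regime estimates are correct and are indeed the two ingredients of the classical proof, but the iteration that is supposed to close the argument does not work as described. With a fixed threshold $\eta$ and Lemma~\ref{lem1.2} applied at the fixed scale $r/4$ around $Q_Y$, the boundary-regime estimate reads $u(Y)\le C\eta^\beta\sup_{\O\cap B(Q_Y,r/2)}u$, and $B(Q_Y,r/2)$ is \emph{not} contained in $B(Q,r_{j-1})$ for $j\ge 1$: its radius $r/2$ dwarfs the gap $r_{j-1}-r_j=2^{-j}r$, and it is only contained in $B(Q,2r)$. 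So the recursion you actually obtain is $M_j\le\max\{C_\eta u(A(Q,r)),\tfrac12 M_0\}$ with $M_0=\sup_{\O\cap B(Q,2r)}u$, i.e.\ a single absorption of half of the sup over $B(Q,2r)$---and $\sup_{\O\cap B(Q,2r)}u$ is controlled by nothing (bounding it by $Cu(A(Q,r))$ is precisely the lemma at scale $2r$), so no conclusion follows. If instead you shrink the H\"older scale to $\sim 2^{-j}r$ so that the sup genuinely lands in $B(Q,r_{j-1})$, then to keep the contraction factor below $\tfrac12$ you must also shrink the dichotomy threshold to $\eta_j\sim\epsilon 2^{-j}$; but then the interior-regime Harnack chains have length $\sim Mj$, their constants grow like $A^{Mj}$ with $A^{M}>2$ in general, and the series $\sum_i 2^{-(j-i)}A^{Mi}$ diverges. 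Either way the scheme does not close.

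Two standard repairs. (a) The argument actually used in \cite{JK} (after Caffarelli--Fabes--Mortola--Salsa) is a proof by contradiction: if $u(Y_0)\ge C^{N}u(A(Q,r))$ for $N$ large, the Harnack chain estimate forces $d(Y_0,\po)$ to be exponentially small in $N$; applying Lemma~\ref{lem1.2} at a scale tied to $d(Y_0,\po)$ (not to $r$) then produces $Y_1$ with $u(Y_1)\ge 2u(Y_0)$ and $|Y_1-Y_0|$ so small that the displacements are summable; iterating gives $Y_j\to Y_\infty\in\po\cap B(Q,2r)$ with $u(Y_j)\to\infty$, contradicting the continuous vanishing of $u$ on $B(Q,2r)\cap\po$. (b) Alternatively, run your dichotomy with threshold and H\"older scale both proportional to the gap between two arbitrary radii $r\le s<s'\le 2r$; this yields $\sup_{\O\cap B(Q,s)}u\le\tfrac12\sup_{\O\cap B(Q,s')}u+C\bigl(r/(s'-s)\bigr)^{N}u(A(Q,r))$, which the standard absorption lemma converts into the desired bound---but only along radii whose successive gaps decay like $\lambda^{j}$ with $\lambda^{-N}<2$, not along $r_j=(1+2^{-j})r$.
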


\begin{lem}[\cite{JK}, Lemma 4.8]\label{lem1.4}
Let $\O$ be a locally NTA domain. Given a compact set $K\subset\RR^n$ for
$Q\in\po\cap K$, $0<2r<R_K$ and $X\in\O\backslash B(Q,2r)$. Then
\begin{equation}\label{eqn1.18}
C^{-1}<\frac{\o^X(B(Q,r))}{r^{n-2}G(A(Q,r),X)} < C,
\end{equation}
where $G(A(Q,r),X)$ is the Green function of $\O$ with pole $X$.
\end{lem}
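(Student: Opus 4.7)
The plan is to prove the two inequalities separately, adapting the classical argument of Jerison--Kenig to the local NTA setting and relying on Lemmas~\ref{lem1.2}--\ref{lem1.3} as the main technical ingredients.

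\textbf{Upper bound.} To show $\o^X(B(Q,r))\le Cr^{n-2}G(A(Q,r),X)$, I would pick a cutoff $\varphi\in C_c^\infty(B(Q,2r))$ with $\varphi\equiv 1$ on $B(Q,3r/2)$ and $|\nabla\varphi|\le C/r$. The harmonic extension $v_\varphi$ majorizes $\o^{\cdot}(B(Q,r))$ by the maximum principle, and the representation formula (\ref{prel-eqn4}) expresses $v_\varphi(X)$ as an integral of $\langle\nabla_Y G(X,Y),\nabla\varphi(Y)\rangle$ supported on the annulus $B(Q,2r)\setminus B(Q,3r/2)$. Since $X\notin B(Q,2r)$, the function $Y\mapsto G(X,Y)$ is harmonic in $B(Q,2r)\cap\O$ and vanishes continuously on $B(Q,2r)\cap\po$. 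Applying Cauchy--Schwarz, a standard Caccioppoli estimate for $G(X,\cdot)$, and then Lemma~\ref{lem1.3} (Carleson estimate) gives
\[
\o^X(B(Q,r))\le \frac{C}{r}\int_{B(Q,2r)\setminus B(Q,3r/2)}|\nabla_Y G(X,Y)|\,dY\le Cr^{n-2}G(A(Q,r),X),
\]
after using interior Harnack along a Harnack chain to pass from the auxiliary corkscrew to $A(Q,r)$.

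\textbf{Lower bound.} Consider the two positive harmonic functions $u_1(Y):=\o^Y(B(Q,r))$ and $u_2(Y):=r^{n-2}G(Y,A(Q,r))$ on $\O\setminus\overline{B(Q,2r)}$; both vanish continuously on $\po\setminus\overline{B(Q,2r)}$. The boundary Harnack principle for locally NTA domains---obtained by iterating Lemmas~\ref{lem1.2} and~\ref{lem1.3} together with the Harnack chain condition, as in \cite{JK}---yields
\[
\frac{u_1(X)}{u_2(X)}\sim \frac{u_1(X_0)}{u_2(X_0)}
\]
for any reference point $X_0\in\O$ in the same component as $X$ and sufficiently far from $B(Q,r)$. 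Choosing $X_0=A(Q,4r)$, one checks using the corkscrew condition that $u_1(X_0)\ge c>0$ (a standard barrier argument at a corkscrew point), while $u_2(X_0)\le C$ by interior Harnack and the pointwise bound $G(Y,A)\le C|Y-A|^{2-n}$. Together these give $u_1(X)\ge C^{-1}u_2(X)$, which is the desired lower bound.

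\textbf{Main obstacle.} The delicate step is the boundary Harnack principle used in the lower bound. While Lemmas~\ref{lem1.2} and~\ref{lem1.3} are its essential ingredients, combining them into a sharp ratio comparison of $u_1$ and $u_2$ requires the full NTA geometry---in particular careful use of the Harnack chain condition near $\po$---and is the most technical part of the Jerison--Kenig theory. Once it is in place, evaluating the comparison constant at a corkscrew point is elementary, so the bulk of the work is concentrated in verifying that the boundary Harnack argument still goes through in the \emph{local} NTA framework used here.
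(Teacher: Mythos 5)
The paper does not actually prove this lemma: it is quoted from \cite{JK} (Lemma 4.8) with the blanket remark that the NTA arguments localize, so your attempt has to be measured against the Jerison--Kenig proof. Your upper bound follows that proof in outline and is essentially correct, up to a slip in the radii: with $\varphi\equiv 1$ on $B(Q,3r/2)$ and $\spt\varphi\subset B(Q,2r)$, the annulus carrying $\nabla\varphi$ reaches $\p B(Q,2r)$, where the pole $X$ is allowed to sit; there the Caccioppoli estimate for $G(X,\cdot)$ breaks down near the singularity, and Lemma \ref{lem1.3} (which needs harmonicity in $B(Q,4r)\cap\O$) does not apply directly. One should instead take $\varphi\equiv 1$ on $B(Q,r)$ with $\spt\varphi\subset B(Q,5r/4)$, run Caccioppoli and the Carleson estimate at scale $r/4$, and Harnack-chain $A(Q,r/4)$ to $A(Q,r)$. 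That is bookkeeping, not a gap.

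The lower bound is where the genuine problem lies. You deduce it from a boundary Harnack (ratio comparison) principle on $\O\setminus\overline{B(Q,2r)}$, asserting that this principle is ``obtained by iterating Lemmas \ref{lem1.2} and \ref{lem1.3}.'' It is not: those lemmas give only one-sided estimates, and in \cite{JK} the comparison theorem (their Lemma 4.10) is proved \emph{from} Lemma 4.8 together with the doubling property (their Lemma 4.9), which is itself a consequence of Lemma 4.8. As structured, your argument is circular. The non-circular argument is a direct maximum-principle comparison: on the sphere $\p B(A(Q,r),\d(A(Q,r))/2)$ one has $G(Y,A(Q,r))\le C\,\d(A(Q,r))^{2-n}\le C(r/M)^{2-n}$ by comparison with the fundamental solution, while $\o^{Y}(B(Q,Cr))\ge c>0$ there --- apply Lemma \ref{lem1.2} to the positive harmonic function $1-\o^{(\cdot)}(B(Q,Cr))$, which vanishes on $B(Q,Cr)\cap\po$, to get this near $Q$, and then use a bounded Harnack chain to reach the sphere. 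Since both $G(\cdot,A(Q,r))$ and $\o^{(\cdot)}(B(Q,Cr))$ vanish on $\po$, the maximum principle in $\O\setminus B(A(Q,r),\d(A(Q,r))/2)$ yields $r^{n-2}G(X,A(Q,r))\le C\,\o^{X}(B(Q,Cr))$, and passing from $B(Q,Cr)$ to $B(Q,r)$ is handled by stating the estimate at comparable scales (doubling is only derived afterwards). Note also that your normalization at $X_0=A(Q,4r)$ tacitly assumes $|A(Q,4r)-A(Q,r)|\ge cr$, which the corkscrew condition does not guarantee; the maximum-principle route avoids this issue entirely.
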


\begin{lem}[\cite{JK}, Lemma 4.8, 4.11]\label{lem1.5}
Let $\O$ be a locally NTA domain. Given a compact set $K\subset\RR^n$ if
$M>1$ and $R>0$, are as in Definition \ref{defn1.2}, for $Q\in\po\cap K$,
$0<2r<R$, and $X\in\O\backslash B(Q,2Mr)$, then for $s\in [0,r]$ 
\begin{equation}\label{eqn1.19}
\o^X(B(Q,2s))\le C\o^X(B(Q,s)),
\end{equation}
where $C\ge 1$ only depends on $K$.
\end{lem}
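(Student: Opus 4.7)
The plan is to reduce doubling of harmonic measure to a Harnack chain comparison of the Green function at two nearby corkscrew points, which is the standard NTA strategy, and then to track constants so that everything remains uniform over $Q\in\partial\Omega\cap K$.

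First I would apply Lemma \ref{lem1.4} at both scales $s$ and $2s$ centered at $Q$. For this to be legitimate one needs the pole $X$ to lie outside $B(Q,4s)$ and $B(Q,2s)$ respectively. Since $X\in\Omega\setminus B(Q,2Mr)$ and $s\le r$, choosing (as Definition \ref{defn1.2} allows) $M\ge 2$ gives $4s\le 4r\le 2Mr$, so both hypotheses hold. This yields
\[
\omega^X(B(Q,s))\asymp s^{n-2}G(A(Q,s),X),\qquad \omega^X(B(Q,2s))\asymp(2s)^{n-2}G(A(Q,2s),X),
\]
with constants depending only on $K$ (through $R_K$ and $M_K$).

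The core step is then to prove $G(A(Q,2s),X)\lesssim G(A(Q,s),X)$. The two corkscrew points both lie in $B(Q,2s)$, each at distance at least $M^{-1}s$ from $\partial\Omega$, and are separated by at most $3s$. By the Harnack chain condition applied in the compact set $K$ (Definition \ref{defn1.2}(2)), they can be joined by a chain of balls in $\Omega$ whose number is bounded by a constant depending only on $K$, and whose radii are bounded below by a fixed multiple of $s$. Since $X\notin B(Q,2Mr)$ with $M\ge 2$, none of these balls contains $X$, so $G(\cdot,X)$ is positive and harmonic on the whole chain; iterated Harnack then gives $G(A(Q,2s),X)\le C\, G(A(Q,s),X)$ with $C=C(K)$.

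Combining the two displays produces $\omega^X(B(Q,2s))\le C\,\omega^X(B(Q,s))$, which is the assertion. The only real subtlety I anticipate is constant bookkeeping: one must fix the factor $M$ in the hypothesis $X\notin B(Q,2Mr)$ large enough that the Green function comparison works \emph{uniformly} for every $s\in(0,r]$ and every $Q$ in the compact set $K$, which is precisely why the statement is formulated with the enlarged forbidden ball rather than the bare $B(Q,2r)$ that appears in Lemma \ref{lem1.4}. The case $s=0$ is trivial.
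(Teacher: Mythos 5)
Your argument is correct and is essentially the proof in Jerison--Kenig (Lemmas 4.8, 4.9 and 4.11 of \cite{JK}), which is all the paper itself offers by way of justification: reduce doubling to the Green function comparison $G(A(Q,2s),X)\le C\,G(A(Q,s),X)$ via Lemma \ref{lem1.4} at both scales and a Harnack chain between the two corkscrew points that avoids the pole. Your remark that the enlarged forbidden ball $B(Q,2Mr)$ is there precisely to keep the chain away from $X$ uniformly in $s\in(0,r]$ is exactly the point.
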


\begin{defn}\label{2-nta}
A domain $\Omega\subset\RR^n$ is 2-sided locally non-tangentially accessible 
if $\Omega^\pm$ are both locally NTA.
\end{defn}

\begin{lem}\label{lem-nta-ad} Let $\Omega\subset\RR^n$ be a 2-sided locally
NTA domain, then $\Omega$ is an admissible domain.
\end{lem}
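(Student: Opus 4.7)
The plan is to verify, in turn, each of the three defining properties of an admissible domain (Definition \ref{prel-defn1}), using the 2-sided corkscrew condition as the central tool. First, since $\O^+$ has an exterior corkscrew at every boundary point (supplied by the corkscrew condition for $\O^-$) and conversely, a standard fundamental-solution barrier built on the corkscrew ball shows that every $Q \in \po$ is Wiener-regular for both $\O^+$ and $\O^-$. For the identity $\po^+ = \po^- = \po$, the inclusion $\po^- \subset \po$ is automatic, while the reverse follows because the interior corkscrews of $\O^-$ accumulate at each $Q \in \po$, placing $Q \in \overline{\O^-} \cap ({\rm int}(\O^c))^c = \po^-$.

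Next, I would fix points $X^\pm \in \O^\pm$ and extend the Green functions $G_\pm(\cdot,X^\pm)$ by zero outside $\overline{\O^\pm}$, obtaining functions $v_\pm$ on $\RR^n \setminus \{X^\pm\}$. Since $G_\pm$ vanishes continuously at every boundary point by Dirichlet regularity, both $v_\pm$ and hence $u = v_+ - v_-$ are continuous on $B(Q,R)$ whenever $R < \min\{\delta(X^+),\delta(X^-)\}$.

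The core of the argument is the bound $u \in H^1(B(Q,R))$. Here I would first observe that the 2-sided corkscrew condition forces $|\po| = 0$: at any would-be Lebesgue density point of $\po$, the densities of $\O^\pm$ must vanish, contradicting the uniform lower bound $|\O^\pm \cap B(Q,r)| \ge \eta\,|B(Q,r)|$ coming from the corkscrew condition. Then Caccioppoli's inequality, applied to $v_+$ on $\O^+ \cap B(Q,R)$ where $v_+$ is bounded (via Lemma \ref{lem1.3}) and harmonic, gives
\[
\int_{\O^+ \cap B(Q,R/2)} |\nabla v_+|^2\,dX \;\le\; \frac{C}{R^2}\int_{\O^+ \cap B(Q,R)} v_+^2\,dX \;<\; \infty,
\]
and symmetrically for $v_-$. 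To upgrade these pointwise $L^2$ bounds into a distributional gradient across $\po$, I would truncate: $(v_+ - \epsilon)^+$ has support in the open set $\{v_+ > \epsilon\} \subset \O^+$, so ordinary integration by parts is legal against any $\psi \in C_c^\infty(B(Q,R/2))$. Letting $\epsilon \to 0$ and using dominated convergence together with $|\po|=0$ and the strict positivity of $v_+$ inside the component of $\O^+$ containing $X^+$ (strong maximum principle) then identifies the natural candidate (gradient on $\O^+$, zero elsewhere) as the distributional gradient of $v_+$ on $B(Q,R/2)$.

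The main obstacle is precisely this last step: passing from interior $H^1$ estimates on each side of $\po$ to a single $H^1$ statement across the boundary, which is where the corkscrew-induced null boundary and the vanishing of $v_\pm$ on $\po$ must be combined. Once that is in hand, $u = v_+ - v_-$ lies in $H^1(B(Q,R/2)) \cap C^0(B(Q,R/2))$, and replacing $R$ by $R/2$ in the admissibility definition concludes the proof. Everything else reduces to routine NTA consequences already recorded in Lemmas \ref{lem1.2}--\ref{lem1.5}.
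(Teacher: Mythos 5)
Your proof is correct, but the key step is carried out by a genuinely different route than the paper's. For the membership $u\in \H^1(B(Q,R))$ the paper does not use Caccioppoli at this stage at all: it starts from the interior gradient bound $|\nabla G_\pm(X,X^\pm)|\le C\,G_\pm(X,X^\pm)/\d(X)$, decomposes $B(Q,r)\cap\O^\pm$ into dyadic layers $\{2^{-j-1}r\le \d(X)<2^{-j}r\}$, and on each layer combines the H\"older decay of Lemma \ref{lem1.2} with the comparison $G_\pm(X,X^\pm)/\d(X)\sim \o^\pm(B(Q_X,\d(X)))/\d(X)^{n-1}$ and the doubling of $\o^\pm$ to sum a geometric series. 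This yields the stronger, quantitative higher-integrability estimate $\int_{B(Q,r)}|\nabla u|^{2+\eta}\le C_\eta\bigl(\o^+(B(Q,r))^{2+\eta}r^{-(n-1)(1+\eta)+1}+\dots\bigr)$ for $\eta<\beta/(1-\beta)$, of which admissibility is a corollary. Your argument — boundedness of $G_\pm$ near $Q$ via Lemma \ref{lem1.3}, Caccioppoli through the truncations $(v_+-\e)^+$ (equivalently, subharmonicity of the zero extension of $G_\pm$), and $|\po|=0$ from the two-sided corkscrew to identify the distributional gradient across $\po$ — gives only the $L^2$ bound, but that is all Definition \ref{prel-defn1} asks for, and you correctly isolate and resolve the one delicate point (avoiding the circular use of $v_+\varphi^2$ as a test function). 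You are also more careful than the paper about the soft items (Wiener regularity via exterior corkscrew barriers, $\po^+=\po^-=\po$, and the passage from two one-sided interior estimates to a single distributional gradient), which the paper treats as implicit. In short: your proof is more elementary and self-contained; the paper's buys a reverse-H\"older-type gain in integrability of $\nabla u$ expressed in terms of harmonic measure, at the cost of the heavier dyadic-covering machinery.
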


\begin{proof} Lemmas \ref{lem1.2} and \ref{lem1.3}  ensure that there exists $M>1$ depending on the NTA constants of $\O^\pm$ such that for $X^\pm\in\O^\pm$, and for $r<\frac{1}{M}\min\{\delta(X^+),\delta(X^-)\}$ if
$X\in \O^\pm\cap B(Q,r)$ then
\begin{equation}\label{holder-c}
G_\pm(X,X^\pm)\le C G_\pm(A^\pm(Q,r), X^\pm)\left(\frac{\delta(X)}{R}\right)^\beta,
\end{equation}
where $\beta$ and $C$ depend on $n$ and the NTA constants of $\O^\pm$.

Thus $u=G_+(-,X^+)-G_-(-,X^-)\in C^0(B(Q,R))$. Recall that for $X\in
B(Q,r)\cap\O^{\pm}$
\begin{equation}\label{eqn-NTA.2}
|\nabla G_\pm(X,X^\pm)|\le C_n \frac{G_\pm(X,X^\pm)}{\d(X)}.
\end{equation}
We claim that there exist $\eta>0$ and $R\in (0,R_0)$ so that
\begin{equation}\label{eqn-NTA.2A}
\int_{B(Q,r)} \left(\frac{G_\pm(X,X^\pm)}{\d(X)}\right)^{2+\eta} dX <
\infty\qquad\mbox{ for }r<R.
\end{equation}
Note that 
\begin{equation}\label{eqn-NTA.3}
\int_{B(Q,r)}\left(\frac{G_\pm(X,X^\pm)}{\d(X)}\right)^{2+\eta}dX =
\sum^\infty_{j=0} \int_{\{2^{-s-1}r\le
\d(X)<2^{-j}r\}}\left(\frac{G_\pm(X,X^\pm)}{\d(X)}\right)^{2+\eta}
\end{equation}
cover $\{X\in B(Q,r); 2^{-j-1}r\le\d(X)<2^{-j}r\}\cap \O^\pm=A^\pm_j$ by
balls $\{B^\pm\left(X^j_i, \frac{r}{2^{j-2}}\right)\}^{N_j}_{i=1}$ such
that $X^j_i\in A^\pm_j$, $|X^j_i-X^j_l|\ge \frac{r}{2^{j-2}}$ for $i\ne l$.
These balls have finite overlaps bounded by a number which only depends on
$n$. Moreover $\dist\left(B^\pm\left(X^j_i, \frac{r}{2^{j-2}}\right)\cap
A^+_j,\, \po\right)\ge \frac{r}{2^{j+2}}$. Note that for $X\in A^+_j$
(\ref{eqn1.16}) yields
\begin{equation}\label{eqn-NTA.4}
G_+(X,X^+)\le C G_+(A^+(Q,r); X^+)2^{-j\beta}
\end{equation}
and
\begin{eqnarray}\label{eqn-NTA.5}
\int_{2^{-j-1}r\le \d(X)<2^{-j}r}
\left(\frac{G(X,X^+)}{\d(X)}\right)^{2+\eta}\, dX & \le & C r^{-(1+\eta)} 2^{j(1+\eta)}
2^{-j\beta(1+\eta)} G_+(A^+(Q,r), X^+)^{1+\eta}\nonumber \\
 &&\qquad \cdot\int_{2^{-j-1}r\le \d(X) <
2^{-j}r} \frac{G(X,X^+)}{\d(X)}dX.
\end{eqnarray}

For $X\in B^+\left(X^{j}_i, \frac{r}{2^{j+2}}\right)$ \cite[Lemma 4.8]{JK}
yields
\begin{equation}\label{eqn-NTA.6}
\frac{G_+(X,X^+)}{\d(X)} \sim
\frac{\o^{+}\left(B(Q_X,\d(X)\right)}{\d(X)^{n-1}}
\end{equation}
where $Q_X\in\po$ is such that $|X-Q_X|=\d(X)$.
The notation $a\sim b$ means that there exists a constant, $C>1$ such that
$C^{-1}\le a/b\le c$.  By Harnack's principle for
$X\in B^+\left(X^j_i, \frac{r}{2^{j-2}}\right)\cap A^+_j$
\begin{equation}\label{eqn-NTA.7}
G_+(X,X^+)\sim G_+(X^j_i, X^+).
\end{equation}
Note also that for $X\in B^+\left(X^j_i, \frac{r}{2^{j-2}}\right)\cap
A^+_j$, $\d(X)\sim\frac{r}{2^j}\sim\d(X^j_i)$. Combining this remark with
the doubling property of $\o^{\pm}$ (see \cite{JK},  4.9 \& 4.11), 
(\ref{eqn-NTA.6}) and (\ref{eqn-NTA.7}) we obtain that for $X\in B^+
\left(X^j_i, \frac{r}{2^{j-2}}\right)\cap A^+_j$
\begin{equation}\label{eqn-NTA.8}
\frac{G_+(X,X^+)}{\d(X)} \sim \frac{G_+(X^j_i,X^+)}{\d(X^j_i)}\sim
\frac{\o^{+}\left(B\left(Q^j_i,
\frac{r}{2^j}\right)\right)}{\left(\frac{r}{2^i}\right)^{n-1}}
\end{equation}
where $Q^j_i\in\po$ is such that $\d(X^j_i)=|X^j_i-Q^j_i|$. In particular
\begin{eqnarray*}
|Q^j_i-Q^j_l| & \ge & |X^j_i-X^j_l|-|X^j_i-Q^j_i|-|X^j_l-Q^j_l| \\
& \ge & |X^j_i-X^j_l|-\frac{r}{2^j}\ge \frac{r}{2^{j-2}}-\frac{r}{2^j}\ge
\frac{r}{2^j}
\end{eqnarray*}
Thus $\left\{B\left(Q^j_i; \frac{r}{2^j}\right)\right\}^{N_j}_{i=1}$ is a
disjoint family of balls in $B(Q,2r)$. Hence the doubling property
of $\o^{+}$ and (\ref{eqn-NTA.8}) yield
\begin{eqnarray} \label{eqn-NTA.9}
\int_{2^{-j-1}\le \d(X)<2^{-j}r} \frac{G_+(X,X^+)}{\d(X)} dX & = &
\sum^{N_j}_{j=1} \int_{A^+_j\cap B^+\left(X^j_i, \frac{r}{2^{j-2}}\right)}
\frac{G_+(X,X^+)}{\d(X)} dX \\
& \le & C\sum^{N_j}_{i=1} \frac{\o^{+}(B(Q^j_i, r2^{-j}))}{(r2^{-j})^{n-1}}
\H^n(A^+_j\cap B^+(X^j_i) \nonumber \\
& \le & C\frac{(r2^{-j})^n}{(r2^{-j})^{n-1}} \o^{+}(B(Q,r)) \nonumber \\
& \le & C2^{-j}r\o^{+}(B(Q,r)). \nonumber
\end{eqnarray}
Combining (\ref{eqn-NTA.3}), (\ref{eqn-NTA.5}) and (\ref{eqn-NTA.9}) we
obtain
\begin{eqnarray}\label{eqn-NTA.10}
\int_{B(Q,r)} \left(\frac{G_+(X,X^+)}{\d(X)}\right)^{2+\eta}dX & \le &
C\sum^\infty_{j=0} 2^{j(1+\eta)-j\beta(1+\eta)}2^{-j} \\
& &\qquad \cdot
\left(\frac{G_+(A^+(Q,r),X^{+})}{r}\right)^{1+\eta}\cdot r\o^+(B(Q,r)). \nonumber
\end{eqnarray}
If $\eta<\frac{\beta}{1-\beta}$ the series in the r.h.s. in (\ref{eqn-NTA.10})
converges. The estimate for $G_-(-,X^-)$ is identical. Thus using
(\ref{eqn-NTA.2}), (\ref{eqn1.18}) and (\ref{eqn-NTA.10}) we conclude that
\begin{equation}\label{eqn-NTA.11}
\int_{B(Q,r)} |\nabla u|^{2+\eta} \le
C_\eta\left(\frac{\o^+(B(Q,r))^{2+\eta}}{r^{(n-1)(1+\eta)-1}} +
\frac{\o^-(B(Q,r))^{2+\eta}}{r^{(n-1)(1+\eta)}}.\right)
\end{equation}
Hence $u\in \H^1(B(Q,r))$ and $\O$ is admissible.\qed
\end{proof}

\begin{thm}\label{thm1.3}
Let $\O\subset\RR^n$ be a 2-sided locally NTA domain. Given a compact set $K\subset\RR^n$ there exists
$R_K\in(0,\min\{\delta(X^+),\delta(X^-)\}$ such that for $4r<R_K$ and
$Q\in\po\cap K$
\begin{equation}\label{eqn1.20}
\frac{\o^{\pm}(B(Q,r))}{r^{n-1}} \sim\left(\frac{1}{r^2}
\int_{B(Q,r)}\frac{|\nabla u^{\pm}|^2}{|X-Q|^{n-2}}\, dX\right)^{\frac{1}{2}}
\end{equation}
and
\begin{equation}\label{eqn1.21}
\gamma(Q,r)^{\frac{1}{2}}\sim \frac{\o^+(B(Q,r))}{r^{n-1}} \cdot
\frac{\o^-(B(Q,r))}{r^{n-1}}.
\end{equation}
\end{thm}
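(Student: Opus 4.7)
The plan is to prove (\ref{eqn1.20}) as a two-sided inequality, since (\ref{eqn1.21}) will then follow immediately by multiplying the $+$ and $-$ versions and invoking the very definition of $\gamma(Q,r)$ in (\ref{prel-gamma-1}). One direction of (\ref{eqn1.20}) is already contained in Lemma \ref{prel-lem1}; the content of the theorem is the reverse inequality, which is where the locally NTA hypothesis is essential.

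For the $\lesssim$ direction, Lemma \ref{prel-lem1} gives the bound with $B(Q,2r)$ on the right; applying it at scale $r/2$ and then using the doubling property of $\o^{\pm}$ (Lemma \ref{lem1.5}, valid since $\O^\pm$ are locally NTA) to replace $\o^\pm(B(Q,r/2))$ by $\o^\pm(B(Q,r))$ on the left yields
$$
\frac{\o^\pm(B(Q,r))}{r^{n-1}}\le C\left(\frac{1}{r^2}\int_{B(Q,r)}\frac{|\nabla u^\pm|^2}{|X-Q|^{n-2}}\,dX\right)^{1/2}.
$$

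For the reverse inequality, I would combine the subharmonicity computation already carried out in (\ref{prel-eqn8}), namely
$$
\int_{B(Q,r)}\frac{|\nabla u^\pm|^2}{|X-Q|^{n-2}}\,dX \le \frac{C}{r^n}\int_{B(Q,2r)}(u^\pm)^2\,dX,
$$
with a Carleson-type $L^\infty$ bound. Since $u^\pm = G_\pm(\cdot,X^\pm)$ is nonnegative, harmonic in $\O^\pm\cap B(Q,8r)$, and vanishes continuously on $\po\cap B(Q,4r)$, Lemma \ref{lem1.3} applied at scale $2r$ gives $\sup_{Y\in B(Q,2r)\cap\O^\pm}u^\pm(Y)\le C\,u^\pm(A^\pm(Q,2r))$. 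Lemma \ref{lem1.4} then converts this into the harmonic-measure estimate $u^\pm(A^\pm(Q,2r))\le C\,\o^\pm(B(Q,2r))/r^{n-2}$, and one further application of the doubling property reduces $B(Q,2r)$ to $B(Q,r)$. Substituting back,
$$
\frac{1}{r^2}\int_{B(Q,r)}\frac{|\nabla u^\pm|^2}{|X-Q|^{n-2}}\,dX \le \frac{C}{r^{n+2}}\cdot r^n\cdot\frac{\o^\pm(B(Q,r))^2}{r^{2n-4}} = C\,\frac{\o^\pm(B(Q,r))^2}{r^{2n-2}},
$$
which is precisely the desired bound.

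Finally, multiplying the $+$ and $-$ versions of (\ref{eqn1.20}) and using the definition of $\gamma(Q,r)$ yields (\ref{eqn1.21}). The main issue is not conceptual but rather one of bookkeeping scales: the Carleson estimate, the Green function/harmonic measure comparison, and the doubling lemma each demand that $r$ be small compared to various radii determined by $K$, the NTA constants of $\O^\pm$, and $\min\{\delta(X^+),\delta(X^-)\}$. Choosing $R_K$ sufficiently small so that all of Lemmas \ref{prel-lem1}, \ref{lem1.3}, \ref{lem1.4}, and \ref{lem1.5} apply simultaneously at scales up to $8r$ (for $4r<R_K$) takes care of this, and no new analytic input beyond what is already assembled in the excerpt is needed.
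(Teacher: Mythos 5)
Your proposal is correct and follows exactly the route the paper indicates: the paper's proof is the one-line remark that the theorem is ``a straightforward combination of the doubling property (\ref{eqn1.19}), (\ref{prel-eqn9}), (\ref{eqn1.17}) and (\ref{eqn1.18}),'' and your argument fleshes out precisely that combination (Lemma \ref{prel-lem1} plus doubling for one inequality; the subharmonicity estimate, the Carleson bound of Lemma \ref{lem1.3}, the Green function/harmonic measure comparison of Lemma \ref{lem1.4}, and doubling again for the reverse). The scale bookkeeping you flag is handled exactly as you describe, by shrinking $R_K$.
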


The proof is a straightforward combination of the doubling property of
$\o^{\pm}$ (see (\ref{eqn1.19})), (\ref{prel-eqn9}), (\ref{eqn1.17}) and
(\ref{eqn1.18}). The constants that appear (\ref{eqn1.20}) and
(\ref{eqn1.21}) depend on the set $K$.

We turn our attention to the tangent structure of 2-sided locally NTA domains.

Let $\O\subset\RR^n$ be a 2-sided locally NTA domain. Let
$\{r_j\}_{j\ge 1}$ be a sequence of positive numbers such that
$\lim_{j\to\infty} r_j=0$. Consider the domains
\begin{equation}\label{eqn-structure.1}
\O^{\pm}_j= \frac{1}{r_j}(\O^{\pm}-Q)\mbox{ with }\po^{\pm}_j=\frac{1}{r_j}
(\po^{\pm}-Q),
\end{equation}
the functions
\begin{equation}\label{eqn-structure.2}
u^{\pm}_j(X)=\frac{u^{\pm}(r_jX+Q)}{\o^{\pm}(B(Q,r_j))} r^{n-2}_j
\end{equation}
and the measures
\begin{equation}\label{eqn-structure.3}
\o^{\pm}_j(E)=\frac{\o^{\pm}(r_jE+Q)}{\o^{\pm}(B(Q,r_j))}\mbox{ for
}E\subset\RR^n\mbox{ a Borel set.}
\end{equation}
Note that Lemma \ref{lem1.4} ensures that given a compact set
$K\subset\RR^n$ containing $Q$, for $j$ large enough (depending only on
$K$)
\begin{equation}\label{eqn-structure.4}
C^{-1}_K\le \frac{u^{\pm}(A^{\pm}(Q,r_j))}{\o^{\pm}(B(Q,r_j))} r^{n-2}_j
\le C_K.
\end{equation}

Here $C_K$ is a constant that only depends on $K$ and $A^{\pm}(Q,r_j)$
denote the non-tangential points associated to $Q$ at radius $r_j$ in
$\O^{\pm}$.

The boundary Harnack principle (see Lemma \ref{lem1.3}) yields that for
$N>1$, $X\in B(0,N)$ and $j$ large enough depending only on $N$
\begin{equation}\label{eqn-structure.5}
u^{\pm}(r_jX+Q)\le C_{N,K} u^{\pm}(A^{\pm}(Q,r_j)).
\end{equation}
Thus combining (\ref{eqn-structure.2}) and  (\ref{eqn-structure.4}) we obtain that
\begin{equation}\label{eqn-structure.6}
\sup_{j\ge 1} \sup_{X\in B(0,N)} u^{\pm}_j(X)\le C_{N,K}<\infty.
\end{equation}
Furthermore since $\o^{\pm}$ are locally doubling (see Lemma \ref{lem1.5})
\begin{equation}\label{eqn-structure.7}
\sup_{j\ge 1}\o^{\pm}_j(B(0,N))\le C_{N,K}<\infty.
\end{equation}

\begin{thm}\label{thm-structure.1}
Let $\O\subset\RR^n$ be a 2-sided locally NTA domain. 
Using the notation above, we have that there exists a sequence (which we
relabel), satisfying as $j\to\infty$
\begin{eqnarray}
\O^{\pm}_j\to\O^{\pm}_\infty && \mbox{in the Hausdorff distance
sense}\label{eqn-structure.8}\\
			&& \mbox{uniformly on compact sets}\nonumber\\
\po^{\pm}_j\to \po^{\pm}_\infty && \mbox{in the Hausdorff distance
sense}\label{eqn-structure.9}\\
			&& \mbox{uniformly on compact sets}\nonumber
\end{eqnarray}
where $\O^{\pm}_\infty$ are unbounded NTA domains with
$\po^+_\infty=\po^-_\infty$. Moreover, there exist $u^{\pm}_\infty\in
C(\RR^n)$ such that
\begin{equation}\label{eqn-structure.10}
u^{\pm}_j\to u^{\pm}_\infty\mbox{ uniformly on compact sets}
\end{equation}
and
\begin{equation}\label{eqn-structure.11}
\left\{\begin{array}{c@{\hspace{.3in}}l}
\Delta u^\pm_\infty=0&\mbox{in }\O^{\pm}_\infty\\
u^{\pm}_\infty=0&\mbox{on }\po^{\pm}_\infty\\
u^{\pm}_\infty>0&\mbox{in }\O^{\pm}_\infty.
\end{array}\right.
\end{equation}
Furthermore
\begin{equation}\label{eqn-structure.12}
\o^{\pm}_j\rightharpoonup\o^{\pm}_\infty\mbox{ weakly as Radon measures.}
\end{equation}
Here $\o^{\pm}_\infty$ are the harmonic measures of $\O^{\pm}_\infty$ with
pole at infinity, corresponding to $u^{\pm}_\infty$, i.e. $\forall \varphi\in C_c^\infty(\RR^n)$,
\begin{equation}\label{green-infty}
\int_{\O_\infty^\pm}u^\pm_\infty \Delta\varphi=\int_{\po^\pm_\infty}\varphi\, d\o^{\pm}_\infty.
\end{equation} 
\end{thm}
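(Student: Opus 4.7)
The strategy is to extract convergent subsequences in three stages: first the geometric objects $\O^\pm_j$ and $\po^\pm_j$, then the functions $u^\pm_j$, and finally the measures $\o^\pm_j$; along the way identify all limits and the relations among them. The main tools are the scale invariance of the NTA conditions, the uniform bounds (\ref{eqn-structure.6})--(\ref{eqn-structure.7}), and the H\"older estimate in Lemma \ref{lem1.2}.

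\emph{Step 1: geometric convergence.} The corkscrew and Harnack chain conditions are scale invariant, so $\O^\pm_j$ inherits the local NTA constants of $\O^\pm$ on any fixed compact set, with the parameter $R_K$ of Definition \ref{defn1.2} replaced by $R_K/r_j\to\infty$. Applying Blaschke's selection theorem to $\po^\pm_j\cap\overline{B(0,N)}$ and extracting diagonally, I pass to a subsequence along which $\po^\pm_j\to\Sigma^\pm$ in Hausdorff distance uniformly on compact sets. Since $\po^+_j=\po^-_j$ for every $j$, we have $\Sigma^+=\Sigma^-$. I define $\O^\pm_\infty$ as the union of those components of $\RR^n\setminus\Sigma^+$ which contain Hausdorff limits of interior corkscrew points of $\O^\pm_j$. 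The uniform corkscrew condition guarantees that $\O^\pm_\infty$ is nonempty and that $\partial\O^\pm_\infty=\Sigma^+$, while the uniform Harnack chain condition passes to the limit; hence $\O^\pm_\infty$ are unbounded NTA domains with $\po^+_\infty=\po^-_\infty$, and $\O^\pm_j\to\O^\pm_\infty$ in Hausdorff distance on compacta follows.

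\emph{Step 2: convergence of $u^\pm_j$.} The bound (\ref{eqn-structure.6}) gives uniform $L^\infty_\loc$ estimates for the harmonic functions $u^\pm_j$ on $\O^\pm_j$. Interior elliptic theory together with Lemma \ref{lem1.2}, applied to the rescaled problem, yields uniform H\"older estimates on compacta up to and across $\po^\pm_j$, after extending $u^\pm_j$ by zero in $(\O^\pm_j)^c$. Arzel\`a--Ascoli and a further diagonal extraction produce $u^\pm_\infty\in C(\RR^n)$ with $u^\pm_j\to u^\pm_\infty$ uniformly on compacta; harmonicity is preserved under uniform limits, so $\Delta u^\pm_\infty=0$ in $\O^\pm_\infty$. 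The Hausdorff convergence of the boundaries combined with $u^\pm_j\equiv 0$ on $\po^\pm_j$ gives $u^\pm_\infty=0$ on $\po^\pm_\infty$, and positivity of $u^\pm_\infty$ in $\O^\pm_\infty$ follows from the nondegeneracy (\ref{eqn-structure.4}) at the corkscrew points, propagated through $\O^\pm_\infty$ by Harnack's inequality along its Harnack chains.

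\emph{Step 3: convergence of $\o^\pm_j$ and identification via (\ref{green-infty}).} The uniform bound (\ref{eqn-structure.7}) and Banach--Alaoglu yield, after a further diagonal extraction, $\o^\pm_j\rightharpoonup\o^\pm_\infty$ weakly for some Radon measure $\o^\pm_\infty$ supported in $\po^\pm_\infty$ (the support statement uses Hausdorff convergence of the boundaries). For any $\varphi\in C_c^\infty(\RR^n)$, the scaled Green identity in the admissible domain $\O^\pm_j$ (valid on a sufficiently large ball for $j$ large by the scaled version of Lemma \ref{lem-nta-ad}), combined with the definitions (\ref{eqn-structure.2})--(\ref{eqn-structure.3}), gives
\[
\int_{\O^\pm_j} u^\pm_j\,\Delta\varphi\,dX = \int_{\po^\pm_j}\varphi\,d\o^\pm_j.
\]
The right-hand side converges to $\int\varphi\,d\o^\pm_\infty$ by weak convergence, while the left-hand side converges to $\int_{\O^\pm_\infty}u^\pm_\infty\,\Delta\varphi\,dX$ by uniform convergence of $u^\pm_j$ on $\spt\varphi$, Hausdorff convergence of $\O^\pm_j$, and the uniform $L^\infty_\loc$ bound. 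This is precisely (\ref{green-infty}), identifying $\o^\pm_\infty$ as the harmonic measure of $\O^\pm_\infty$ with pole at infinity corresponding to $u^\pm_\infty$.

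\emph{Main obstacle.} The delicate part is Step 1: showing that the Hausdorff limit $\Sigma^+=\Sigma^-$ really is the common boundary of two nondegenerate NTA components $\O^\pm_\infty$, with neither the interior collapsing nor the boundary thickening under the limit. This is where the two-sided corkscrew condition is essential, both to produce interior points of $\O^\pm_\infty$ arbitrarily close to every point of $\Sigma^+$ and to prevent mass from accumulating off $\po^\pm_j$ in the limit.
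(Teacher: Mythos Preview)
Your proposal is correct and follows essentially the same compactness strategy as the paper's reference: the paper itself does not give a proof of this theorem but simply refers to \cite{KT2}, Section~4, where precisely this three-stage argument (Hausdorff compactness of the rescaled domains and boundaries via the scale-invariant NTA data, Arzel\`a--Ascoli for the Green functions using the uniform bounds and boundary H\"older estimate, and weak compactness of the measures followed by passage to the limit in the Green identity) is carried out. Your identification of the ``main obstacle'' in Step~1 is apt, and it is exactly the two-sided corkscrew condition that resolves it in \cite{KT2}.
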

For the proof of this theorem see \cite{KT2} section 4.

When $\O$ is a 2-sided locally NTA domain,  by the differentiation theory of Radon measures
(see \cite{EG}) we know that 
\begin{equation}\label{eqn-decomp-po}
\po=\Lambda_1\cup \Lambda_2\cup \Lambda_3\cup \Lambda_4,
\end{equation}
where
\begin{equation}\label{eqn-l1}
\Lambda_1=\left\{Q\in\po: 0<h(Q):=\frac{d\o^-}{d\o^+}(Q)=D_{\o^+}\o^-(Q)=\lim_{r\to 0}\frac{\o^-(B(Q,r))}{\o^+(B(Q,r))}<\infty\right\}
\end{equation}
\begin{equation}\label{eqn-l2}
\Lambda_2=\left\{Q\in\po: D_{\o^+}\o^-(Q)=\lim_{r\to 0}\frac{\o^-(B(Q,r))}{\o^+(B(Q,r))}=\infty\right\}
\end{equation}
\begin{equation}\label{eqn-l3}
\Lambda_3=\left\{Q\in\po: D_{\o^+}\o^-(Q)=\lim_{r\to 0}\frac{\o^-(B(Q,r))}{\o^+(B(Q,r))}=0\right\}
\end{equation}
\begin{equation}\label{eqn-l4}
\Lambda_4=\left\{Q\in\po: \lim_{r\to 0}\frac{\o^-(B(Q,r))}{\o^+(B(Q,r))}\hbox{ does not exist }\right\}.
\end{equation}
Note that:
\begin{itemize}
\item  $\o^+(\Lambda_2)=0$, $\o^-(\Lambda_3)=0$ and $\o^\pm(\Lambda_4)=0$.
\item $\o^+\perp \o^-$ in $\Lambda_2\cup \Lambda_3$.
\item $\o^+\res \Lambda_1$ and $\o^-\res\Lambda_1$ are mutually absolutely continuous.
\item By the Radon-Nikodym theorem  $h\in L_{loc}^1(\o^+)$ and $\frac{1}{h}\in L_{loc}^1(\o^-)$.
\end{itemize}

Define
\begin{equation}\label{eqn-size.18}
\G=\{Q\in\Lambda_1: \, h(Q)=\lim_{r\to 0}\NotInt_{B(Q,r)}h\, d\o^+,\ \lim_{r\to 0}
\mbox{\NotInt}_{B(Q,r)} |h(P)-h(Q)|d\o^+(P)=0\}.
\end{equation}
Note that $\o^{\pm}(\Lambda_1\backslash\G)=0$.

\begin{thm}\label{thm-structure.2}
Let $\Omega\subset\RR^n$ be a 2-sided locally NTA domain. For $Q\in\G$ (defined in 
(\ref{eqn-size.18})) the blow up
procedure in Theorem \ref{thm-structure.1} yields
\begin{eqnarray}
\o^+_\infty & = & \o^-_\infty \label{eqn-structure.13} \\
u_\infty & = & u^+_\infty-u^-_\infty\mbox{ is a harmonic polynomial in
$\RR^n$}. \label{eqn-structure.14}
\end{eqnarray}
Furthermore there exists $\eta=\eta(n)>0$ such that if $\Omega$ is a $\eta$-Reifenberg
flat domain (i.e for each compact set $K\subset\RR^n$ there exists $r_K>0$ so that for 
$P\in\po\cap K$, and $r\in(0,r_K)$, $\beta_\infty(P,r)<\eta(n)$), then $u_\infty$ is linear.
Here
\begin{equation}\label{prel-eqn3A}
\beta_\infty(P,r)=\frac{1}{r}\inf_{L\in G(n,n-1)} D[\po\cap B(P,r); L\cap B(P,r)],
\end{equation}
and $D$ denotes the Hausdorff distance between sets.
\end{thm}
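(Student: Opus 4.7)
The plan is to combine the blow-up machinery of Theorem~\ref{thm-structure.1}, the structural properties of $\G$ baked into (\ref{eqn-size.18}), and the Green identity (\ref{green-infty}) to identify the limit. First I would prove (\ref{eqn-structure.13}). Fix $\varphi\in C_c(\RR^n)$ supported in $B(0,R)$. Using (\ref{eqn-structure.3}) and the Radon--Nikodym relation $d\o^-=h\,d\o^+$ valid on $\L_1$,
\[
\int \varphi\,d\o^-_j \;=\; \frac{1}{\o^-(B(Q,r_j))}\int \varphi\!\left(\tfrac{P-Q}{r_j}\right) h(P)\,d\o^+(P).
\]
Writing $h(P)=h(Q)+(h(P)-h(Q))$, the first piece contributes $\frac{h(Q)\,\o^+(B(Q,r_j))}{\o^-(B(Q,r_j))}\int\varphi\,d\o^+_j$, whose prefactor tends to $1$ by the definition of $h(Q)$ in (\ref{eqn-l1}). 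The remaining piece is bounded in absolute value by
\[
\|\varphi\|_\infty\cdot \frac{\o^+(B(Q,Rr_j))}{\o^-(B(Q,r_j))}\cdot \frac{1}{\o^+(B(Q,Rr_j))}\int_{B(Q,Rr_j)}|h-h(Q)|\,d\o^+,
\]
which goes to zero thanks to the locally doubling property of $\o^\pm$ (Lemma~\ref{lem1.5}) combined with the averaged Lebesgue condition in (\ref{eqn-size.18}). Passing $j\to\infty$ yields $\int\varphi\,d\o^-_\infty=\int\varphi\,d\o^+_\infty$, hence (\ref{eqn-structure.13}).

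Second, I would verify (\ref{eqn-structure.14}). Applying (\ref{green-infty}) to $(\O^\pm_\infty,u^\pm_\infty,\o^\pm_\infty)$ and subtracting, the boundary integrals cancel by (\ref{eqn-structure.9}) and (\ref{eqn-structure.13}); hence $\int_{\RR^n}u_\infty\,\D\varphi=0$ for every $\varphi\in C^\infty_c(\RR^n)$, and Weyl's lemma gives that $u_\infty$ is harmonic on all of $\RR^n$. The remaining point is polynomial growth. For $X\in B(0,N)$ and $j$ large, Lemma~\ref{lem1.3} together with Lemma~\ref{lem1.4} yield
\[
u^\pm(r_jX+Q)\;\le\; C\,u^\pm(A^\pm(Q,Nr_j))\;\le\; C'\,(Nr_j)^{-(n-2)}\,\o^\pm(B(Q,Nr_j)),
\]
and iterating the doubling estimate (\ref{eqn1.19}) roughly $\log_2 N$ times together with (\ref{eqn-structure.2}) produces $|u^\pm_j(X)|\le C\,N^{\alpha-(n-2)}$ uniformly in $j$, where $\alpha$ depends only on the doubling constant of $\o^\pm$. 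Passing to the limit, $|u_\infty(X)|\le C(1+|X|^d)$ for some integer $d$, so by the classical Liouville theorem (harmonic functions on $\RR^n$ of polynomial growth are polynomials) $u_\infty$ is a harmonic polynomial.

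For the Reifenberg-flat refinement, scale invariance of $\beta_\infty$ shows that the hypothesis $\beta_\infty(P,r)<\eta(n)$ is preserved under the rescaling (\ref{eqn-structure.1}); passing to the Hausdorff limit (\ref{eqn-structure.9}), we obtain $\beta_\infty(P,r)<\eta(n)$ on $\po^\pm_\infty$ at every scale $r>0$. For $\eta(n)$ small enough, this forces $\po^\pm_\infty$ to be a single hyperplane $L$ (the analogous flatness result from \cite{KT2}), so that $\O^\pm_\infty$ are the two complementary half-spaces. Any positive harmonic function on a half-space vanishing on the bounding hyperplane is, by the Martin representation combined with the polynomial growth from the previous step, a constant multiple of the distance to $L$; hence $u^\pm_\infty=c^\pm\dist(\,\cdot\,,L)$ restricted to the appropriate half-space, and (\ref{eqn-structure.13}) forces $c^+=c^-$. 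Therefore $u_\infty$ is linear. The main obstacle I anticipate is the careful bookkeeping in the first step: one must replace $h(P)$ by $h(Q)$ inside an integral against $\o^+$ concentrated on a ball slightly larger than $B(Q,r_j)$, and this is precisely where the full strength of $\G$, namely both clauses in (\ref{eqn-size.18}), together with the locally doubling property, becomes indispensable; a secondary subtlety is ruling out contributions to $u^\pm_\infty$ from finite Martin boundary points in the Reifenberg step, which is exactly where the polynomial growth derived above is used.
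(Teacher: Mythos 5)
Your proofs of (\ref{eqn-structure.13}) and (\ref{eqn-structure.14}) follow the paper's argument essentially verbatim: the same splitting $h(P)=h(Q)+(h(P)-h(Q))$ with the error term controlled by the doubling property and the second clause of (\ref{eqn-size.18}) (your normalization $h(Q)\o^+(B(Q,r_j))/\o^-(B(Q,r_j))\to 1$ is equivalent to the paper's $\NotInt_{B(Q,r_j)}h\,d\o^+\to h(Q)$), then the Green identity (\ref{green-infty}) plus (\ref{eqn-structure.13}) to get weak harmonicity of $u_\infty$ across $\po_\infty$, and finally a polynomial growth bound from Lemmas \ref{lem1.3}, \ref{lem1.4} and (\ref{eqn1.19}) together with Liouville. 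The paper delegates this last growth estimate to the proof of Theorem 4.4 in \cite{KT2}; your explicit version of it is correct.

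The Reifenberg-flat refinement, however, contains a genuine gap. You assert that $\eta$-Reifenberg flatness of $\po^\pm_\infty$ at every point and every scale forces $\po^\pm_\infty$ to be a single hyperplane. This is false as a general principle: the graph of a Lipschitz function with Lipschitz constant at most $\eta$ (for instance the cone $\{x_n=\eta|x'|\}$) is $\eta$-Reifenberg flat at all points and scales in the sense of (\ref{prel-eqn3A}) but is not a hyperplane, and Reifenberg-flat sets with small fixed constant need not even be rectifiable. Consequently the half-space/Martin-representation step has no footing. The paper's route is different and quantitative: Theorem 4.1 of \cite{KT1} shows that $\eta$-Reifenberg flatness makes $\o^+$ $\delta$-doubling in the sense of Definition 4.4 of \cite{KT2}, and the argument of Theorem 4.4 there converts $\delta$-doubling with $n\delta<1$ into a strictly sub-quadratic growth bound on $u_\infty$, which forces the harmonic polynomial $u_\infty$ to have degree one. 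Your argument could be repaired without the hyperplane claim by combining what you have already established --- that $u_\infty$ is a harmonic polynomial --- with the fact (this is in essence the content of Lemma \ref{lem-TS.2}, applied to the blow-down at infinity) that the zero set of a harmonic polynomial of degree at least $2$ whose positive and negative sets are NTA is uniformly non-flat at large scales, contradicting the all-scale $\eta$-flatness of $\po_\infty$; but as written the step is unjustified.
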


\begin{proof}
Let $Q\in\G$,  and $\{r_j\}_{j\ge 1}$ a sequence of positive numbers such
that $\lim_{j\to\infty}r_j=0$. Suppose that  (\ref{eqn-structure.8}),
(\ref{eqn-structure.9}), (\ref{eqn-structure.10}), (\ref{eqn-structure.11})
and (\ref{eqn-structure.12}) hold. Let $\varphi\in C_c(\RR^n)$ then
\begin{equation}\label{eqn-structure.15}
\int_{\po^{\pm}_j}\varphi d\o^{\pm}_j = \frac{1}{\o^{\pm}(B(Q,r_j))}
\int_{\po^{\pm}}\varphi\left(\frac{P-Q}{r_j}\right)d\o^+(P).
\end{equation}
In particular if $\spt\varphi\in B(0,M)$ then 
\begin{eqnarray}\label{eqn-structure.16}
\int_{\po^-_j}\varphi d\o^-_j & = & \frac{1}{\o^-(B(Q,r_j))}
\int_{\po}\varphi\left(\frac{P-Q}{r_j}\right)h(P)d\o^+(P) \\
& = & \frac{1}{\NotInt_{B(Q,r_j)}hd\o^+} \cdot \frac{1}{\o^+(B(Q,r_j))}
\int\varphi\left(\frac{P-Q}{r_j}\right)h(P)d\o^+(P)\nonumber \\
& = & \frac{h(Q)}{\NotInt_{B(Q,r_j)}hd\o^+} \cdot \frac{1}{\o^+(B(Q,r_j))}
\int_{\po} \varphi\left(\frac{P-Q}{r_j}\right)d\o^+(P) \nonumber \\
&&+ \frac{1}{\NotInt_{B(Q,r_j)}hd\o^+}\cdot \frac{1}{\o^+(B(Q,r_j))}
\int_{\po} \varphi\left(\frac{P-Q}{r_j}\right)(h(P)-h(Q))d\o^+(P). \nonumber
\end{eqnarray}
Thus using the fact that $\o^+$ is locally doubling
(\ref{eqn-structure.16}) yields
\begin{eqnarray}\label{eqn-structure.17}
 \left|\int_{\po^-_j} \varphi d\o^-_j - \frac{h(Q)}{\NotInt_{B(Q,r_j)}hd\o^+}
\int_{\po^+_j} \varphi d\o^+_j\right| 
 & \le & 
\frac{ \|\varphi\|_{\infty}}{\NotInt_{B(Q,r_j)} hd\o^+}  \nonumber\\
& \cdot&\frac{\o^+(B(Q,Mr_j))}{\o^+(B(Q,r_j))}
\NOTINT_{_{B(Q,Mr_j)}}|h(P)-h(Q)|d\o^+(P)\nonumber \\
& \le & \frac{C_{K,M} \|\varphi\|_{\infty} }{\NotInt_{B(Q,r_j)}hd\o^+}
\NOTINT_{_{B(Q,Mr_j)}}|h(P)-h(Q)|d\o^+(P).\nonumber
\end{eqnarray}
Since $Q\in\G$ letting $j\to\infty$ we obtain
\begin{equation}\label{eqn-structure.18}
\int_{\po^+_\infty}\varphi d\o^-_\infty = \int_{\po^-_\infty} \varphi
d\o^+_\infty
\end{equation}
for every $\varphi\in C_c(\RR^n)$. Since $\po^+_\infty=\po^-_\infty$ to
show that $u_\infty=u^+_\infty-u^-_\infty$ is harmonic in $\RR^n$ let
$\varphi\in C^\infty_c(\RR)$ by (\ref{eqn-structure.18}) we have
\begin{eqnarray}\label{eqn-structure.19}
\int_{\RR^n} u_\infty\Delta\varphi & = & \int_{\O^+_\infty}
u_\infty\Delta\varphi dY - \int_{\O^-_\infty} u_\infty\Delta\varphi d Y \\
& = & \int_{\po^+_\infty} \varphi d\o^+_\infty - \int_{\po^-_\infty}\varphi
d\o^-_\infty=0\nonumber
\end{eqnarray}
Since $u_\infty$ is continuous in $\RR^n$, it is weakly harmonic and
therefore harmonic in $\RR^n$. Note that $u_\infty(0)=0$.
An argument similar to the one that appears in the proof of Theorem 4.4 in
\cite{KT2} shows that $u_\infty$ is a harmonic polynomial. 
Theorem 4.1 in \cite{KT1} shows that 
given $\delta>0$ there exists $\eta>0$ such that if $\O$ is 
$\eta$-Reifenberg flat then 
$\o^+$ is $\delta$-doubling as in Definition 4.4
in \cite{KT2}.  The same argument as in the proof of Theorem 4.4 in \cite{KT2} shows in this 
case that if $\o^+$ is $\delta$ doubling with $n\delta<1$ then $u_\infty$ is linear.\qed
\end{proof}



\begin{cor}\label{cor-structure.1}
There exists $\eta>0$ such that if 
$\O$ is a $\eta$-Reifenberg flat domain then 
\begin{equation}\label{g-size}
\dim_\H\G\le n-1.
\end{equation}
\end{cor}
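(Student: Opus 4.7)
The plan is to show that at every point $Q\in\G$ the boundary $\po$ is asymptotically flat in the sense of Lemma \ref{lem-size.2}, i.e. $\lim_{r\to 0}\beta_\po(Q,r)=0$, and then read off the Hausdorff dimension bound from that lemma. Since $\G\subset\po$, the one-sided inequality $\beta_\G(Q,r)\le\beta_\po(Q,r)$ reduces matters to controlling the flatness of the full boundary.

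First I would fix $Q\in\G$, choose an arbitrary sequence $r_j\searrow 0$, and apply the blow-up machinery of Theorem \ref{thm-structure.1} to extract a subsequence (relabeled) for which $r_j^{-1}(\O^\pm-Q)\to\O^\pm_\infty$ and $r_j^{-1}(\po-Q)\to\po_\infty$ in the Hausdorff metric uniformly on compact sets, together with the corresponding limits $u^\pm_\infty$ and $\o^\pm_\infty$. Because $Q\in\G$, Theorem \ref{thm-structure.2} identifies $u_\infty=u^+_\infty-u^-_\infty$ as a harmonic polynomial in $\RR^n$; moreover for $\eta$ chosen as in that theorem, the $\eta$-Reifenberg flatness hypothesis forces $u_\infty$ to be linear. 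Since $u_\infty(0)=0$ and $u^\pm_\infty$ are strictly positive in $\O^\pm_\infty$ respectively (each vanishing on $\po_\infty=\po^+_\infty=\po^-_\infty$), the polynomial must be of the form $u_\infty(X)=c\,\vec n\cdot X$ for some unit vector $\vec n$ and some $c>0$, so $\po_\infty$ coincides with the hyperplane $L=\vec n^{\perp}$.

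Next, the Hausdorff convergence $r_j^{-1}(\po-Q)\cap \overline{B(0,1)}\to L\cap\overline{B(0,1)}$ immediately gives $\beta_\po(Q,r_j)\to 0$ along this subsequence. Since the choice of sequence $r_j\searrow 0$ was arbitrary, and every subsequence of any $r_j\searrow 0$ admits a further subsequence along which $\beta_\po(Q,r_j)\to 0$, we conclude that $\lim_{r\to 0}\beta_\po(Q,r)=0$. Therefore $\lim_{r\to 0}\beta_\G(Q,r)=0$ at every $Q\in\G$, and Lemma \ref{lem-size.2} yields $\dim_\H\G\le n-1$.

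The proof is essentially a short assembly: all the substantive work has been packed into Theorem \ref{thm-structure.2} (harmonicity and linearity of the blow-up under Reifenberg flatness) and Lemma \ref{lem-size.2} (the flatness-to-dimension principle). The only point requiring care is the identification $\po_\infty=\{u_\infty=0\}$, so that linearity of $u_\infty$ actually translates into planarity of $\po_\infty$; this follows from the two-sided structure preserved in the limit, since $u^\pm_\infty>0$ in $\O^\pm_\infty$ prevents $\{u_\infty=0\}$ from extending beyond the common boundary.
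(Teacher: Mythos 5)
Your argument is, in substance, the paper's own proof: blow up at $Q\in\G$, use Theorem \ref{thm-structure.2} to identify the limit $u_\infty$ as a harmonic polynomial which the Reifenberg flatness hypothesis forces to be linear, pass from Hausdorff convergence of the rescaled boundaries to $\lim_{r\to0}\beta_{\po}(Q,r)=0$, and finish with Lemma \ref{lem-size.2}. Your closing remark about why $\po_\infty$ is contained in $\{u_\infty=0\}$ is correct and is in fact the only direction of the identification that is needed, since $\beta$ is a one--sided quantity.

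There is, however, one step you skipped that the paper does not: the corollary assumes only that $\O$ is $\eta$-Reifenberg flat, while Theorems \ref{thm-structure.1} and \ref{thm-structure.2} (and all the NTA estimates feeding into them) are stated for 2-sided locally NTA domains. Before you may ``apply the blow-up machinery of Theorem \ref{thm-structure.1}'' you must verify its hypothesis; the paper does this in the first sentence of its proof by citing Theorem 3.1 of \cite{KT1}, which shows that for $\eta$ small enough depending only on $n$, an $\eta$-Reifenberg flat domain is 2-sided locally NTA. The final $\eta$ in the statement must then be taken small enough to satisfy both this requirement and the linearity criterion of Theorem \ref{thm-structure.2}. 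With that one-line addition your proof is complete and coincides with the paper's.
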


\begin{proof} Theorem 3.1 in \cite{KT1} shows that if $\eta$ is small enough depending only on $n$ then 
$\O$ is a 2-sided locally NTA domain. Thus by Theorem \ref{thm-structure.2} for $Q\in\G$ all blow-ups
of $\po$ at $Q$ are the zero set of linear polynomial that is an $(n-1)$-plane. 
For $Q\in\po$, the last remark in Theorem \ref{thm-structure.2} ensures that $\lim_{r\to 0}\beta_\infty(Q,r)=0$.
Thus given $\e>0$ there exists $r_{Q,\e}>0$ such that for $r<r_{Q,\e}$;
$\beta_\infty(Q,r)<\e$, which implies that there exists an $(n-1)$ plane $L(Q,r)$
through $Q$ so that
\begin{equation}\label{eqn-size.9A}
\po\cap B(Q,r)\subset\po\cap B(Q,r)\subset(L(Q,r)\cap B(Q,r):\e r).
\end{equation}
Thus for $Q\in \po$, $\lim_{r\to 0}\beta_{\po}(Q,r)=0$. Lemma \ref{lem-size.2}
yields the conclusion of the corollary.\qed
\end{proof}

\section{Tangent structure and size of $\Gamma$}

Let $\cF$ be the set of $(n-1)$ flat measures in $\RR^n$, i.e.
\begin{equation}\label{eqn-TS.A.00}
\cF=\{c\H^{n-1}\res V: c\in (0,\infty); V\in G(n,n-1)\}.
\end{equation}
Note that since $G(n,n-1)$ is compact, $\cF$ has a compact basis, and it is
closed under weak convergence of Radon measure.

\begin{lem}\label{lem-TS.2}
Let $h$ be a harmonic polynomial in $\RR^n$ such that $h(0)=0$ and
$\{h>0\}$ and $\{h<0\}$ are unbounded NTA domains. Let $\o$ be the
corresponding harmonic measure, i.e. $\forall\varphi\in
C^\infty_c(\RR^n)$
\begin{equation}\label{eqn-TS.A.18}
\int_{\{h>0\}}h\Delta\varphi = \int_{\{h<0\}} h^-\Delta\varphi =
\int_{\{h=0\}} \varphi d\o.
\end{equation}
There exists $\epsilon_0>0$ (depending on the NTA
constant of $\{h>0\}$ and on $n$) such that if for some $r_0>0$
\begin{equation}\label{eqn-TS.A.19}
d_r(\o,\cF)<\epsilon_0\ \mbox{ for }r\ge r_0,\ \mbox{ then}\ \o\in\cF.
\end{equation}
\end{lem}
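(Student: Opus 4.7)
The plan is to argue by contradiction, using a blow-down of $\omega$ at infinity to produce a "tangent measure at infinity'' $\omega_\infty$ associated to the top-degree homogeneous part of $h$, and then to derive a contradiction from the fact that $\omega_\infty$ is homogeneous with the wrong scaling exponent to be close to a flat measure.

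Since $\omega\in\cF$ exactly when $\{h=0\}$ is an $(n-1)$-plane, i.e.\ $h$ is linear, it suffices to show the hypothesis forces $h$ to be linear. Assume for contradiction that $h=h_k+h_{k-1}+\cdots+h_1$ has degree $k\ge 2$, with $h_j$ its homogeneous component of degree $j$. Define the rescalings $\omega_R:=R^{-(n+k-2)}T_{0,R}[\omega]$. Changing variables in the defining identity $\int\varphi\,d\omega=\int_{\{h>0\}}h\,\Delta\varphi$ shows that $\omega_R$ is the harmonic measure (with pole at infinity) associated to the polynomial $h_R(y):=R^{-k}h(Ry)=h_k(y)+R^{-1}h_{k-1}(y)+\cdots+R^{1-k}h_1(y)$, which tends to $h_k$ uniformly on compact sets. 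Because NTA conditions are scale invariant, the domains $\{h_R>0\}$ are NTA with constants uniform in $R$; combined with the uniform doubling bound on $\omega_R$ (Lemma \ref{lem1.5}) and the convergence arguments of Theorem \ref{thm-structure.1}, one extracts a subsequence along which $\omega_R\to\omega_\infty$ as Radon measures, where $\omega_\infty$ is the harmonic measure of $h_k$.

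Now Remark \ref{rem-GMTP.2}(ii) and the invariance of the normalized distance $d_1$ under multiplication by positive scalars give
\[
d_1(\omega_R,\cF)=d_1(T_{0,R}[\omega],\cF)=d_R(\omega,\cF)<\epsilon_0,\qquad R\ge r_0.
\]
Since $h_k(0)=0$ forces $0\in\spt\omega_\infty$, we have $F_1(\omega_\infty)>0$, so Remark \ref{rem-GMTP.2}(iii) yields $d_1(\omega_\infty,\cF)\le\epsilon_0$. Furthermore, a change of variables using $h_k(\lambda y)=\lambda^k h_k(y)$ gives the homogeneity $T_{0,\lambda}[\omega_\infty]=\lambda^{n+k-2}\omega_\infty$, and this together with the scale invariance of $d_1$ upgrades the above to $d_r(\omega_\infty,\cF)\le\epsilon_0$ for every $r>0$.

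It remains to choose $\epsilon_0=\epsilon_0(n,\text{NTA constants})>0$ small enough that the bound $d_1(\omega_\infty,\cF)\le\epsilon_0$ forces $\omega_\infty\in\cF$, which would mean $\{h_k=0\}$ is a hyperplane, hence $h_k$ is linear, contradicting $k\ge 2$. This is a compactness argument: the family of normalized harmonic measures $\omega_{h'}/F_1(\omega_{h'})$ attached to homogeneous harmonic polynomials $h'$ of degree $\ge 2$ with $\{h'>0\}$ NTA (with constants bounded by those of $\{h>0\}$) is a relatively compact subset of Radon measures, closed under weak limits, and disjoint from $\cF$; hence the continuous functional $\mu\mapsto d_1(\mu,\cF)$ is bounded below on it by a positive constant. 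The main obstacle is verifying this compactness, specifically that the NTA constants bound the degree of such an $h'$ (so that modulo scaling $h'$ ranges over a finite-dimensional space), a rigidity fact for homogeneous harmonic polynomials with NTA complements; once the degree bound is in hand, continuity of $d_1(\cdot,\cF)$ via Remark \ref{rem-GMTP.2}(iii) supplies the uniform positive lower bound, and the proof closes.
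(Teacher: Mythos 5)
Your strategy is genuinely different from the paper's. The paper never blows down or invokes compactness of a family of polynomials: it works directly with the hypothesis $d_r(\o,\cF)<\epsilon_0$ for $r\ge r_0$, which for a suitable $\tau>1$ pins the ratio $F_r(\o)/F_{\tau r}(\o)$ to within a factor $(1+\delta)$ of $\tau^{-n}$ (the exact ratio for flat measures). Iterating this over the scales $\tau^\ell r_0$ and converting $F_r(\o)$ into $\o(B(0,r))$ via doubling, and then into $h(A(0,r))$ via the comparison $\o(B(0,r))\sim r^{n-2}h(A(0,r))$, the paper gets $h(A(0,s))\le Cs^{1+\beta}$ with $\beta<1$; interior derivative estimates then force $D^2h\equiv 0$, so $h$ is linear and $\o\in\cF$. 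This is quantitative, treats all degrees simultaneously, and produces an explicit $\epsilon_0$ depending only on $n$ and the NTA constants.

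Your reduction to the top-degree homogeneous part is correct as far as it goes: the rescaling identity for $\o_R$, the transfer $d_1(\o_R,\cF)=d_R(\o,\cF)<\epsilon_0$, the passage to the limit via Remark \ref{rem-GMTP.2}(iii), and the homogeneity $T_{0,\lambda}[\o_\infty]=\lambda^{n+k-2}\o_\infty$ (which also shows cleanly that $\o_\infty\notin\cF$ when $k\ge2$, since flat measures scale with exponent $n-1$). But the argument does not close: the uniform positive lower bound on $d_1(\cdot,\cF)$ over your family requires compactness, which in turn requires that the NTA (corkscrew) constant of $\{h'>0\}$ bounds the degree of $h'$ --- precisely the step you flag as ``the main obstacle'' and then do not prove. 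Without it, $\epsilon_0$ would depend on $h$ itself rather than only on $n$ and the NTA constants, and the lemma as stated (and as it is applied in Theorem \ref{lem-TS.A.3}, where it must be uniform over all tangent polynomials at a point) would not follow. The missing fact is true --- the corkscrew condition at $0$ puts a spherical cap of fixed aperture inside a nodal domain of the degree-$k$ spherical harmonic $h_k|_{S^{n-1}}$, and domain monotonicity of the first Dirichlet eigenvalue gives $k(k+n-2)\le\lambda_1(\mathrm{cap})$, bounding $k$ --- but it is a substantive rigidity statement that must be supplied, and even then one must verify closedness of the family under weak limits (limits of uniformly NTA nodal domains, convergence of the associated measures). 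The paper's iteration of the $F_r$-ratio estimate avoids all of this.
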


\begin{remark}\label{rem-TS.A.1}
Note that $h$ is the Green's function with pole at infinity for $\{h>0\}$
and $\o$ is its corresponding harmonic measure.
\end{remark}

\begin{proof}
Let $\tau>1$ and $r\ge r_0$ there exists $\Psi\in\cF$ such that $F_{\tau
r}(\Psi)=1$ and
\begin{equation}\label{eqn-TS.A.20}
F_r\left(\frac{\o}{F_{\tau r}(\o)}, \Psi\right) \le
F_{r\tau}\left(\frac{\o}{F_{\tau r}(\o)}, \Psi\right)<\epsilon_0.
\end{equation}
Thus
\begin{equation}\label{eqn-TS.A.21}
F_r(\Psi)-\epsilon_0\le \frac{F_r(\o)}{F_{\tau r}(\o)} \le
F_r(\Psi)+\epsilon_0.
\end{equation}
Since $\Psi=c\H^{n-1}\res V$, $F_{r\tau}(\Psi)=1=c\frac{\o_n}{n+1} (\tau
r)^n$ and $F_r(\Psi)=\tau^{-n}$. Thus given $\delta>0$ (small enough) for
$\tau\in (1,\tau_{\epsilon_0,\delta})$ with $\tau_{\epsilon_0,\delta}=
\left(\frac{\delta \epsilon^{-1}_0}{2}\right)^{\frac{1}{n}}$ for $r\ge r_0$
(\ref{eqn-TS.A.21}) yields
\begin{equation}\label{eqn-TS.A.22}
(1+\delta)^{-1}\tau^{-n}<\frac{F_r(\o)}{F_{\tau_r(\o)}}<(1+\delta)\tau^{-n}.
\end{equation}

Applying (\ref{eqn-TS.A.22}) to $\tau^j r$ for $j=1,\cdots, \ell$ with $\ell\in\NN$ and $r\ge r_0$,
then multiplying the outcomes we obtain
\begin{equation}\label{eqn-TS.A.23}
[(1+\delta)^{-1}\tau^{-n}]^\ell\le \frac{F_r(\o)}{F_{\tau^\ell r}(\o)}\le
[(1+\delta)\tau^{-n}]^\ell.
\end{equation}
Since $\o$ is a doubling measure with doubling constant depending only on
the NTA constant of $\{h>0\}$ and on $n$ (see \cite{KT3} Lemma 3.1 or
\cite{JK} Lemma 4.9, 4.11) from the definition of $F_r$ (see Definition
\ref{defn-GMTP.1}) we have that there is $C>1$ such that for $r>0$
\begin{equation}\label{eqn-TS.A.24}
C^{-1}r\o (B(0,r))\le
\frac{r}{2}\o\left(B\left(0,\frac{r}{2}\right)\right)\le F_r(\o)\le r\o
(B(0,r)).
\end{equation}

Combining (\ref{eqn-TS.A.23}) and (\ref{eqn-TS.A.24}) we obtain
\begin{equation}\label{eqn-TS.A.25}
C^{-1}[(1+\delta)^{-1}\tau^{-n}]^\ell \le
\frac{\o(B(0,r))}{\tau^\ell\o(B(0,\tau^\ell r))} \le C[(1+\delta)\tau^{-n}]^\ell.
\end{equation}
Thus
\begin{equation}\label{eqn-TS.A.26}
C^{-1}(1+\delta)^{-\ell}\frac{\o(B(0,\tau^\ell r))}{(\tau^\ell r)^{n-1}} \le
\frac{\o(B(0,r))}{r^{n-1}} \le C(1+\delta)^\ell
\frac{\o(B(0,\tau^\ell r))}{(\tau^\ell r)^{n-1}}.
\end{equation}

By Lemma 3.4 in \cite{KT3} (see also Lemma 4.8 in \cite{JK}) we know
that there exists $C>1$ depending  only on $n$ and on the NTA constant of
$\{h>0\}$ such that
\begin{equation}\label{eqn-TS.A.27}
C^{-1}\le \frac{\o(B(0,r))}{r^{n-2}h(A(0,r))} \le C.
\end{equation}
Here $A(0,r)\in \{h>0\}$ denotes a nontangential point for $0$ at radius $r>0$.
Combining (\ref{eqn-TS.A.26}) and (\ref{eqn-TS.A.27}) we have
\begin{equation}\label{eqn-TS.A.28}
C^{-1}(1+\delta)^{-\ell} \frac{h(A(0,\tau^\ell r))}{\tau^\ell r} \le \frac{h(A(0,r))}
{r} \le C(1+\delta)^\ell \frac{h(A(0,\tau^\ell r))}{\tau^\ell r}.
\end{equation}
If $1+\delta=\tau^\beta$ with $\beta\in (0,1)$ then (\ref{eqn-TS.A.28})
becomes
\begin{equation}\label{eqn-TS.A.29}
C^{-1}\tau^{-\beta \ell} \frac{h(A(0,\tau^\ell r))}{\tau^\ell r} \le
\frac{h(A(0,r))}{r} \le C\tau^{\beta\ell} \frac{h(A(0,\tau^\ell r))}{\tau^\ell r}.
\end{equation}

Note that by choosing $\delta=4\epsilon_0$ (with $\epsilon_0>0$ to still be
determined) then $\tau_{\epsilon_0,\delta} = \tau_0=2^{\frac{1}{n}}$ and 
$1+\delta=1+4\epsilon_0=\tau^{\beta/n}$ for some $\tau\in(1, 2^{1/n})$ and
$\beta\in (0,1)$ provided $\epsilon_0<\frac{1}{4} (2^{1/n}-1)$. 
For $s\in (0,\tau r_0)$ there is $\ell\ge 1$ such that 
$\tau^{\ell-1}r_0<s\le\tau^\ell r_0$. For such $s$, the
boundary Harnack's inequality (for NTA domains (see Lemma 3.3 \cite{KT3},
also Lemma 4.4 \cite{JK})), combined with (\ref{eqn-TS.A.29}) yields
\begin{eqnarray}\label{eqn-TS.A.30}
\frac{h(A(0,s))}{s} & \le & C\frac{h(A(0,\tau^\ell r_0))}{\tau^{l-1}r_0} \le
C\tau \frac{h(A(0,\tau^\ell r_0))}{\tau^\ell r_0} \\
& \le & C\tau \tau^{\ell\beta} \frac{h(A(0,r_0))}{r_0} \nonumber \\
& \le & C\tau^{2+\beta} \left(\frac{s}{r_0}\right)^{\beta }
\frac{h(A(0,r_0))}{r_0}.\nonumber
\end{eqnarray}
Since $h$ is harmonic using its Poisson integral formula and computing its
second derivatives (as in the proof of Theorem 4.4 in \cite{KT2}) from
(\ref{eqn-TS.A.30}) we obtain that for $X\in B(0,s)$
\begin{equation}\label{eqn-TS.A.31}
|\partial_{\alpha_1}\partial_{\alpha_2} h(X)| \le \frac{h(0,s)}{s^2} \le
C(\tau,r_0)s^{\beta-1} \frac{h(A(0,r_0))}{r_0}.
\end{equation}
Since $\beta<1$ letting $s\to\infty$ we conclude that $h$ is a polynomial of
degree 1, and therefore $\o$ is an $(n-1)$ flat measure.
\qed
\end{proof}

We will now return to the question of the extent to which the relative
behavior of the interior and exterior harmonic measures determines the size
of the boundary of a domain.

\begin{remark}\label{rem-oo}
Note that for $Q\in\G$
\begin{equation}\label{eqn-oo}
\Tan(\o^+,Q)=\Tan(\o^-,Q)
\end{equation}
\end{remark}

\begin{thm}\label{lem-TS.A.3}
Let $\Omega$ be a 2-sided locally NTA domain. Let $\Gamma$ be as in (\ref{eqn-size.18}), and
\begin{equation}\label{eqn-TS-g-ast}
\Gamma^\ast=\left\{Q\in \Gamma:\ \Tan(\o^\pm,Q)\cap \cF\not = \emptyset\right\}.
\end{equation} 
Then for $Q\in\Gamma^\ast$, $\Tan(\o^\pm, Q)\subset\cF$.
In particular, all blow-ups of $\po$ at $Q\in\Gamma^\ast$
are $(n-1)$-planes, and $\dim_\H \, \Gamma^\ast\le n-1$. Furthermore
$\Gamma_0=\Gamma\backslash\Gamma^\ast$ satisfies
$\o^\pm(\Gamma_0)=0$.
\end{thm}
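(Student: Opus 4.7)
The plan is to apply the connectivity principle of Corollary \ref{cor-GMTP.1} (equivalently Theorem \ref{thm-GMTP.1}) with $\eta=\omega^+$ (equivalently $\omega^-$, by Remark \ref{rem-oo}), taking $\cF$ to be the $(n-1)$-flat $d$-cone of \eqref{eqn-TS.A.00} and $\cM$ to be the $d$-cone generated by harmonic measures $\omega_h$ associated to harmonic polynomials $h$ with $h(0)=0$ whose positivity/negativity sets are unbounded NTA domains with NTA constants bounded by a fixed $M_0$ depending only on the local NTA data of $\Omega$ near $\Gamma$. By Theorem \ref{thm-structure.1} and Theorem \ref{thm-structure.2}, for $Q\in\Gamma$ every element of $\Tan(\omega^\pm,Q)$ is such a measure, so $\Tan(\omega^\pm,Q)\subset\cM$, and trivially $\cF\subset\cM$. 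The uniform doubling of the measures in $\cM$ (via the uniform NTA bound and Lemma \ref{lem1.5}) combined with Proposition \ref{prop-GMTP.1} shows $\cM$ has compact basis, while $\cF$ is clearly closed under weak convergence.

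The crucial analytic input is Lemma \ref{lem-TS.2}: there exists $\epsilon_0=\epsilon_0(n,M_0)>0$, uniform over $\mu\in\cM$, such that $d_r(\mu,\cF)<\epsilon_0$ for all $r\ge r_0$ forces $\mu\in\cF$. This is precisely the hypothesis of Corollary \ref{cor-GMTP.1}; since $\Gamma\subset\spt\omega^\pm$ and $\Gamma^\ast$ is by definition the subset of $\Gamma$ for which $\Tan(\omega^\pm,Q)\cap\cF\neq\emptyset$, the corollary yields $\Tan(\omega^\pm,Q)\subset\cF$ for every $Q\in\Gamma^\ast$. Because $\omega^\pm$ is locally doubling with $\spt\omega^\pm=\po$, Lemma \ref{lem-GMTP.3} then shows that along any blow-up sequence $r_j^{-1}(\po-Q)$ converges in Hausdorff distance on compact sets to the support of the corresponding tangent measure, which must now be an $(n-1)$-plane. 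Hence $\lim_{r\to 0}\beta_{\po}(Q,r)=0$ for every $Q\in\Gamma^\ast$, and Lemma \ref{lem-size.2} yields $\dim_\H\Gamma^\ast\le n-1$.

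For the last assertion $\omega^\pm(\Gamma_0)=0$, I would show that at $\omega^\pm$-a.e.\ $Q\in\Gamma$ some tangent measure is already flat. Fix $Q$ where both Theorem \ref{thm-structure.2} and Theorem \ref{thm-GMTP.3} apply. Pick any $\Psi\in\Tan(\omega^\pm,Q)$; by Theorem \ref{thm-structure.2}, $\Psi=\omega_h$ with $h$ a non-trivial harmonic polynomial, $h(0)=0$, and $\spt\Psi=\{h=0\}$. By the Hardt--Simon stratification \cite{HS}, the singular set $\{h=0\}\cap\{\nabla h=0\}$ has Hausdorff dimension at most $n-2$, so there exists a regular point $y\in\spt\Psi$. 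Near $y$ the domain $\{h>0\}$ is smooth, so any blow-up of $\omega_h$ at $y$ is a positive multiple of $\H^{n-1}$ restricted to the tangent hyperplane $T_y\{h=0\}$, and thus lies in $\cF$. By Theorem \ref{thm-GMTP.3}(ii), this blow-up also lies in $\Tan(\omega^\pm,Q)$, giving $Q\in\Gamma^\ast$ and hence $\omega^\pm(\Gamma_0)=0$.

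The hardest step I anticipate is verifying that the $d$-cone $\cM$ is uniformly controlled: specifically, that the NTA constants of the blow-up domains arising from Theorem \ref{thm-structure.1} can be bounded by a single $M_0$, so that $\cM$ has compact basis and the $\epsilon_0$ of Lemma \ref{lem-TS.2} is uniform across $\cM$. This requires tracking, in a quantitative way, how the local NTA constants of $\Omega$ pass through the blow-up procedure and, via Theorem \ref{thm-GMTP.3}(i), to tangent measures of tangent measures; everything else is a fairly direct consequence of the already-assembled GMT and ACF machinery.
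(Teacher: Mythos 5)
Your proposal is correct and follows the same overall architecture as the paper: (a) Hardt--Simon stratification of $\{h=0\}$ plus Theorem \ref{thm-GMTP.3} (tangents of tangents are tangents) to show $\Tan(\o^\pm,Q)\cap\cF\neq\emptyset$ at $\o^\pm$-a.e.\ $Q\in\G$, hence $\o^\pm(\G_0)=0$; (b) the connectivity principle of Corollary \ref{cor-GMTP.1} combined with Lemma \ref{lem-TS.2} to upgrade one flat tangent measure to all tangent measures being flat; (c) Lemma \ref{lem-GMTP.3} and Lemma \ref{lem-size.2} for the dimension bound. The one place where you diverge is the choice of the ambient $d$-cone $\cM$, and it is exactly this choice that creates the difficulty you flag as the ``hardest step.'' You take $\cM$ to be the cone of all harmonic measures of harmonic polynomials with uniformly bounded NTA constants, which forces you to prove that this class is closed under weak limits and has a compact basis via uniform doubling --- a nontrivial global statement about a family of measures. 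The paper instead takes $\cM=\cF\cup\Tan(\o,Q)$ for each fixed $Q\in\G^\ast$: compactness of the basis of $\Tan(\o,Q)$ is then immediate from Theorem \ref{thm-GMTP.2}, since $\o^\pm$ is locally doubling at $Q$ by Lemma \ref{lem1.5}, and closedness of $\cF$ is trivial. This pointwise choice of $\cM$ dissolves most of the uniformity problem you anticipate; the only residual uniformity needed is that the single $\epsilon_0$ of Lemma \ref{lem-TS.2} works for every $\mu\in\Tan(\o,Q)$, which reduces to the (standard, and much lighter) fact that the blow-up domains of Theorem \ref{thm-structure.1} inherit NTA constants controlled by the local NTA data of $\O$ near $Q$. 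So your argument would go through, but the paper's localized cone gets you there with considerably less machinery.
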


\begin{proof}
For $Q\in\Gamma$ the blow-up procedure described in Theorem \ref{thm-structure.1}
always yields a harmonic polynomial (see Theorem \ref{thm-structure.2}). Let $h$ be a tangent harmonic polynomial of $u$ at $Q$, with
$\{h>0\}$ $\{h<0\}$ (unbounded NTA domains) and $\nu$
the corresponding harmonic measures to $h^\pm$. By \cite{HS} the zero set
of $h$, i.e. $\partial\{h>0\}$ decomposes into a
disjoint union of the embedded $C^1$ submanifold
$h^{-1}\{0\}\cap\{|Dh|>0\}$, together with a closed set $h^{-1}\{0\}\cap
|Dh|^{-1}|0|$ which is countably $(n-2)$-rectifiable. Furthermore by Lemma \ref{lem-GMTP.3}, 
$\spt\nu=h^{-1}\{0\}$. For $Y\in h^{-1}\{0\}\cap \{|Dh|>0\}$ and $X\in
\RR^n$
\begin{equation}\label{eqn-TS.A.32}
h_{Y,r}(X)=\frac{h(rX+Y)}{r}
\mathop{\longrightarrow}\limits_{r\to 0}\lan Dh(Y),X\ran
\end{equation}
uniformly on compact sets. Thus $r^{-1}(\partial\{h>0\}-Y)\to
\lan\frac{Dh(Y)}{|Dh(Y)|}\ran^{\perp} =V$ as $r\to 0$, in the
Hausdorff distance sense and $r^{-(n-1)}T_{Y,r}[\nu]\to
|Dh(Y)|\H^{n-1}\res V$. Therefore, for $Y\in
h^{-1}\{0\}\cap\{|Dh|>0\}$ all non-zero tangent measures of $\nu$ at
$Y$ are flat, i.e. $\Tan(\nu,Y)\subset\cF$. By Theorem \ref{thm-GMTP.3}
for $\o=\o^\pm$ a.e. $Q\in\Gamma$ if $\nu\in\Tan(\o,Q)$, then for all
$Y\in\spt\nu$, $\Tan(\nu,Y)\subset\Tan(\o,Q)$.
Thus, for $\o$ a.e. $Q\in\Gamma$, $\cF\cap\Tan(\o,Q)\ne\emptyset$, which proves that $\o^\pm(\Gamma_0)=0$. Our goal
is to use Corollary \ref{cor-GMTP.1} combined with Lemma \ref{lem-TS.2} to
show that for $Q\in\Gamma^\ast$, $\Tan(\o,Q)\subset\cF$. Let
$\cM=\cF\cup\Tan(\o,Q)$. Recall that $\cF$ the set of all $(n-1)$ flat
measures is a $d$-cone with compact basis. Since $\o$ is a doubling Radon
measure Theorem \ref{thm-GMTP.2} ensures that for $Q\in\Gamma$, $\Tan(\o,Q)$
is a $d$-cone with compact basis. Hence $\cM$ is also a $d$-cone with
compact basis. Moreover $\cF\subset\cM$, and $\cF$ is relatively closed
with respect to weak convergence of Radon measures. By Lemma
\ref{lem-TS.2} there exists $\epsilon_0>0$ such that if
$d_r(\mu,\cF)<\epsilon_0$ for all $r\ge r_0$, then $\mu\in\cF$. Corollary
\ref{cor-GMTP.1} ensures then that for $Q\in\Gamma^\ast$,
$\Tan(\o,Q)\subset\cF$. Lemma \ref{lem-GMTP.3} guarantees that all blow ups
of $\po$ at $Q$ converge in the Hausdorff distance sense to an $(n-1)$-plane. 
Thus for $Q\in\Gamma^\ast$, $\lim_{r\to\infty} \beta_\infty(Q,r)=0$. As in the
proof of Corollary (\ref{cor-structure.1}) this implies that for
$Q\in\Gamma^\ast$ $\lim_{r\to 0} \beta_\Gamma^\ast(Q,r)=0$. By Lemma \ref{lem-size.2} we
conclude that  $\dim_\H \Gamma^\ast\le n-1$.
\qed
\end{proof}

\begin{cor}\label{cor-bdy-decomp}
Let $\Omega$ be a 2-sided locally NTA domain. Then the boundary of $\O$ can be decomposed as
follows:
\begin{equation}\label{eqn-decomp1}
\po=\G^\ast\cup S\cup N,
\end{equation}
\begin{equation}\label{eqn-decomp2}
\o^+\res\G^\ast \ll \o^-\res\G^\ast\ll \o^+\res\G^\ast, \ \ 
\o^+\perp \o^-\ \hbox{ in }\ S,\ \hbox {and }\ \o^+(N)=\o^-(N)=0.
\end{equation}
Moreover
\begin{equation}\label{eqn-decomp3}
\dim_{\H}\G^\ast\le n-1.
\end{equation}
Furthermore, if $\o^\pm(\G^\ast)>0$ then 
\begin{equation}\label{eqn-decomp4}
\dim_{\H}\G^\ast= n-1.
\end{equation}
Here $\G^\ast$ is as in Theorem \ref{lem-TS.A.3}, $S=\Lambda_2\cup\Lambda_3$ (see (\ref{eqn-l2}) and (\ref{eqn-l3})), and $N=\Lambda_1\backslash \G^\ast\cup \Lambda_4$.
\end{cor}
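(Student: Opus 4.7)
The decomposition itself is mostly a matter of bookkeeping against the splitting $\po=\Lambda_1\cup\Lambda_2\cup\Lambda_3\cup\Lambda_4$ introduced in (\ref{eqn-decomp-po})--(\ref{eqn-l4}) and Theorem~\ref{lem-TS.A.3}. First I would identify $S=\Lambda_2\cup\Lambda_3$ and recall that on $\Lambda_2\cup\Lambda_3$ the Radon--Nikodym derivative $d\o^-/d\o^+$ is either zero or infinite, which by the standard Lebesgue decomposition gives $\o^+\perp\o^-$ on $S$. The set $\G^\ast\subset\G\subset\Lambda_1$, so $\o^+\ll\o^-\ll\o^+$ on $\G^\ast$ is immediate from the definition of $\Lambda_1$. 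For the null set, write $N=(\Lambda_1\setminus\G^\ast)\cup\Lambda_4=(\Lambda_1\setminus\G)\cup(\G\setminus\G^\ast)\cup\Lambda_4$. Each piece has $\o^\pm$ measure zero: the first because $\G$ contains $\o^+$-a.e.\ Lebesgue point of $h$, the second by Theorem~\ref{lem-TS.A.3} (which gives $\o^\pm(\G\setminus\G^\ast)=0$), and the third by the Lebesgue decomposition. The upper bound $\dim_\H\G^\ast\le n-1$ is the other conclusion of Theorem~\ref{lem-TS.A.3}.

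The genuinely new content is the lower bound $\dim_\H\G^\ast\ge n-1$ when $\o^\pm(\G^\ast)>0$. My plan is to use Beurling's inequality. By Lemma~\ref{lem-nta-ad} a 2-sided locally NTA domain is admissible, so the ACF formula (Theorem~\ref{prel-acf1}) and the remark after Lemma~\ref{prel-lem1} apply: for any compact $K\subset\po$ there exist $r_0, C_K>0$ with
\[
\frac{\o^+(B(Q,r))}{r^{n-1}}\cdot \frac{\o^-(B(Q,r))}{r^{n-1}}\le C_K
\quad\text{for every }Q\in K,\ r<r_0.
\]
Assuming $\o^+(\G^\ast)>0$, I would pick such a compact $K$ with $\o^+(\G^\ast\cap K)>0$.

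Next I would exploit the fact that on $\G^\ast\subset\Lambda_1$ the density $h(Q)=d\o^-/d\o^+(Q)$ lies in $(0,\infty)$ and is the pointwise limit of $\o^-(B(Q,r))/\o^+(B(Q,r))$. Applying Egorov's theorem to this convergence and to the characteristic function of $\{h\ge m\}$ for a suitable $m>0$, I extract a subset $E\subset\G^\ast\cap K$ with $\o^+(E)>0$ on which the ratios $\o^-(B(Q,r))/\o^+(B(Q,r))$ exceed $m/2$ uniformly for all $r<r_1$. Combining this uniform lower bound on $\o^-/\o^+$ with the Beurling estimate yields a uniform growth bound
\[
\o^+(B(Q,r))\le C'\, r^{n-1}\quad\text{for every }Q\in E,\ r<\min\{r_0,r_1\}.
\]
A standard covering lemma then gives $\H^{n-1}(E)\ge C''\,\o^+(E)>0$, so $\dim_\H\G^\ast\ge n-1$.

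The main obstacle is the Egorov step: the pointwise convergence has to be upgraded to uniform convergence on a set of positive $\o^+$-measure, simultaneously for the ratios and for the density $h$. This is routine once one restricts to a compact piece and fixes a sublevel set $\{m\le h\le M\}$ of positive $\o^+$-measure (which exists because $h\in L^1_{\loc}(\o^+)$ and $h>0$ $\o^+$-a.e.\ on $\G^\ast$). Everything else is either contained in the ACF/Beurling pipeline developed earlier in the paper or is a standard consequence of the $s$-dimensional growth bound passing to $\H^s$ lower bounds via the usual $5r$-covering estimate.
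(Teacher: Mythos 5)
Your proposal is correct and follows essentially the same route as the paper: the decomposition is the same bookkeeping against $\Lambda_1,\dots,\Lambda_4$, the bullet points after (\ref{eqn-l4}), and Theorem \ref{lem-TS.A.3}, while the lower bound on $\dim_\H\G^\ast$ comes from Beurling's inequality (\ref{prel-eqn13B}) combined with $0<h(Q)<\infty$ on $\G^\ast\subset\G$. The only cosmetic difference is the final step: the paper deduces the pointwise bound $\liminf_{r\to 0}\log\o^+(B(Q,r))/\log r\ge n-1$ on $\G^\ast$ and invokes Proposition 2.3 of \cite{F}, whereas you uniformize by hand (Egorov/countable exhaustion) and then apply the mass distribution principle directly --- the two are equivalent.
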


\begin{proof}
We only need to show that that (\ref{eqn-decomp4}) holds whenever $\o^\pm(\G^\ast)>0$.
By (\ref{prel-eqn13B}) for $Q_0\in\po$, $r_0<1/8\min\{\delta(X^+), \delta(X^-)\}$, $Q\in \Gamma^\ast\cap B(Q_0, \frac{r_0}{2})$ and $0<r<r_0$
\begin{eqnarray}\label{eqn-TS.35}
\frac{\o^+(B(Q,r))}{r^{n-1}}\cdot \frac{\o^-(B(Q,r))}{r^{n-1}} &\le & C(Q_0,r_0)\\
\left(\NOTINT_{_{B(Q,r)}}h\, d\o^+\right)\left(\frac{\o^+(B(Q,r))}{r^{n-1}}\right)^2&\le & C(Q_0, r_0)\nonumber
\end{eqnarray}
Thus
\begin{equation}\label{eqn-TS.36}
\frac{\log\left(\NOTINT_{_{B(Q,r)}}h\, d\o^+\right)^{1/2}}{\log r} +\frac{\log\o^+(B(Q,r))}{\log r}\ge
n-1 +\frac{\log C(Q_0,r_0)}{\log r}.
\end{equation}
Letting $r$ tend ot 0 in (\ref{eqn-TS.36}) we obtain that
\begin{equation}\label{eqn-TS.37}
\liminf_{r\to 0}\frac{\log\o^+(B(Q,r))}{\log r}\ge n-1.
\end{equation}
By Proposition 2.3 in \cite{F} from (\ref{eqn-TS.37}) we conclude that, since $\o^+(\Gamma^\ast)>0$,
 $\dim_\H\Gamma^\ast \ge n-1$.\qed
\end{proof}

\begin{thm}\label{rectif}
Let $\O$ be a 2-sided locally NTA domain such that $\H^{n-1}\res\po$ is a Radon measure. Then 
as in Theorem \ref{lem-TS.A.3}, $\po=\G^\ast\cup S\cup N$ and $\G^\ast$ is 
$(n-1)$-rectifiable.
\end{thm}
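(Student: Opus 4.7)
The plan is to show that the $(n-1)$-density
\[
\Theta^{n-1}(\H^{n-1}\res\po,Q)=\lim_{r\to 0}\frac{\H^{n-1}(\po\cap B(Q,r))}{\omega_{n-1}r^{n-1}}
\]
equals $1$ at $\H^{n-1}$-a.e.\ $Q\in\G^\ast$, and then apply Preiss's density theorem \cite{P} (see also \cite{M} Theorem 17.6): a Radon measure whose $m$-density exists, is positive and finite a.e., is $m$-rectifiable. Together with the hypothesis that $\H^{n-1}\res\po$ is Radon, this will give $(n-1)$-rectifiability of the measure $\H^{n-1}\res\G^\ast$ and hence of the set $\G^\ast$.

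The upper bound $\Theta^{\ast,n-1}(\H^{n-1}\res\po,Q)\leq 1$ at $\H^{n-1}$-a.e.\ $Q\in\po$ is the classical density estimate for Hausdorff measure restricted to a $\sigma$-finite Borel set (\cite{M} Theorem 6.2); this is where the Radon hypothesis enters.

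For the lower bound, fix $Q\in\G^\ast$. The proof of Theorem \ref{lem-TS.A.3} already yields $\beta_\infty(Q,r)\to 0$ as $r\to 0$: for every $\varepsilon>0$ there is $r_\varepsilon>0$ such that for $r<r_\varepsilon$ some $(n-1)$-plane $V_r\ni Q$ satisfies $\po\cap B(Q,r)\subset\{x:\dist(x,V_r)\leq\varepsilon r\}$. By Theorem \ref{thm-structure.1}, the rescaled domains $r^{-1}(\O^\pm-Q)$ also converge in Hausdorff distance on compact sets to the two open half-spaces of the blow-up plane. Combining this with the 2-sided NTA corkscrew and Harnack-chain conditions one shows that, for $r$ small, the slabs
\[
\{p\pm t\nu_r : p\in V_r\cap B(Q,(1-C\varepsilon)r),\ C\varepsilon r<t<r/2\}
\]
(with $\nu_r$ a unit normal to $V_r$) are contained in $\O^\pm$ respectively. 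For each such $p$, the segment $\{p+t\nu_r:|t|\leq r/2\}$ then connects $\O^+$ to $\O^-$ and so meets $\po$ at a point whose orthogonal projection onto $V_r$ is exactly $p$. Therefore $\pi_{V_r}(\po\cap B(Q,r))\supset V_r\cap B(Q,(1-C\varepsilon)r)$, and since $\pi_{V_r}$ is $1$-Lipschitz,
\[
\H^{n-1}(\po\cap B(Q,r))\geq \omega_{n-1}\bigl((1-C\varepsilon)r\bigr)^{n-1}.
\]
Letting $r\to 0$ (so that $\varepsilon\to 0$) delivers the required lower density bound.

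The main technical obstacle is the slab-containment step, which requires more than Hausdorff convergence of $\po$ to $V_r$: it uses propagation of corkscrew points in $\O^\pm$ through Harnack chains at all small scales, which is exactly what the 2-sided local NTA structure provides. Once $\Theta^{n-1}(\H^{n-1}\res\po,Q)=1$ holds $\H^{n-1}$-a.e.\ on $\G^\ast$, Preiss's theorem identifies $\H^{n-1}\res\G^\ast$ as an $(n-1)$-rectifiable measure, so $\G^\ast$ itself is an $(n-1)$-rectifiable set.
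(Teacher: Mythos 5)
Your proposal is correct and follows essentially the same route as the paper: both establish the lower density bound $\Theta_\ast^{n-1}(\H^{n-1}\res\po,Q)\ge 1$ on $\G^\ast$ via the flatness from Theorem \ref{lem-TS.A.3}, the corkscrew points, a slab/connectivity argument producing a boundary point over each point of the approximating plane, and projection; the upper bound comes from \cite{M} Theorem 6.2 using the Radon hypothesis, and rectifiability then follows from the density theorem (\cite{M} Theorem 17.6). The only cosmetic difference is that the paper's slab containment needs only the corkscrew condition plus Hausdorff closeness to the plane, not the Harnack chain condition you invoke.
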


\begin{proof}
Our strategy consists in proving that the density of $\H^{n-1}\res \G^\ast$ exists and is 1 
a.e.. Then we appeal to Theorem 17.6 in \cite{M}, which provides a rectifiability criteria.

First we prove that for $Q\in\Gamma^\ast$ (see (\ref{eqn-TS-g-ast}) for the definition)
\begin{equation}\label{eqn-RP4}
\Theta_\ast^{n-1}(\H^{n-1}\res\po, Q)=\liminf_{r\to 0}\frac{\H^{n-1}(B(Q,r)\cap\po)}{\o_{n-1}r^{n-1}}\ge 1.
\end{equation}

For $Q\in\G^\ast$ and $\d>0$ by Theorem \ref{lem-TS.A.3} there exists $r_0>0$ so that for $r<r_0$
there exists $L(Q,r)$ an $(n-1)$ plane containing $Q$ so that 
\begin{equation}\label{eqn-size.3}
\frac{1}{r}D[\po\cap B(Q,r); L(Q,r)\cap B(Q,r)]\le\d.
\end{equation}
Since $\O^\pm$ satisfy the corckscrew condition, we may assume that for 
$r<r_0$ there exist
$A^{\pm}(Q,r)\subset\O^{\pm}$ so that 
\begin{equation}\label{eqn-size.4}
B\left(A^{\pm}(Q,r),\frac{r}{M}\right)\subset\O^{\pm}\cap B(Q,r).
\end{equation}
\begin{center}
\setlength{\unitlength}{.9mm}
\begin{picture}(155,85)(0,-20)
\put(15,0){\line(2,1){130}}
\put(55,20){\bigcircle{75}}
\put(55,40){\bigcircle{25}}
\put(55,40){\line(-1,-2){5.5}}
\put(55,0){\bigcircle{25}}
\put(55,0){\line(2,1){11}}
\curve(62,12, 58,7, 57,2)
\curve(0,0, 13,12.5, 25,20, 30,21, 40,20, 47.5,19 , 55,20, 60,22.5, 70,27,
80,32, 110,47, 122.5,53, 135,56.5, 155,57.5, 165,55)
\put(55,20){\circle*{1}}
\put(55,30){\circle*{1}}
\put(55,0){\circle*{1}}
\put(77,55){\makebox(0,0)[t]{$\O$}}
\put(47,35){\makebox(0,0)[t]{$\frac{r}{M}$}}
\put(63,40){\makebox(0,0)[m]{$A^+(Q,r)$}}
\put(55,17){\makebox(0,0)[m]{$Q$}}
\put(65,15){\makebox(0,0)[t]{$\frac{r}{M}$}}
\put(63,-2){\makebox(0,0)[t]{$A^-(Q,r)$}}
\end{picture}
\end{center}

If $\vec{n}(Q,r)$ denotes the unit normal to $L(Q,r)$ (\ref{eqn-size.3})
and (\ref{eqn-size.4}) ensure that for $\d$ small $(\d<\frac{1}{2M})$
\[
|\langle A^{\pm}(Q,r)-Q, \vec{n}(Q,r)\ran|\ge 2\d r.
\]
We may assume that $\lan A^+(Q,r)-Q, \vec{n}(Q,r)\ran\ge 2\d r$. If $Z\in
B(Q,r)$ and $\lan Z-Q, \vec{n}(Q,r)\ran\ge 2\d r$ then $Z\in\O^+$,
otherwise $Z\in\O^-$ (since  $Z\not\in\po$ by (\ref{eqn-size.3})) and by
connectivity there would be a point $P\in\po$ in the segment joining
$A^+(Q,r)$ to $Z$. Such $P$ would satisfy $\lan P-Q, \vec{n}(Q,r)\ran\ge
2\d r$ which contradicts (\ref{eqn-size.3}). This proves that
\begin{equation}\label{eqn-size.5}
\{Z\in B(Q,r):\lan Z-Q, \vec{n}(Q,r)\ran\ge 2\d r\}\subset\O^+\cap B(Q,r)
\end{equation}
and
\begin{equation}\label{eqn-size.6}
\{Z\in B(Q,r): \lan Z-Q; \vec{n}(Q,r)\ran\le -2\d r\}\subset\O^-\cap
B(Q,r).
\end{equation}
Thus for $x\in L(Q,r)\cap B\left(Q,r\sqrt{1-4\d^2}\right)$
a simple connectivity argument shows that there exists $P\in\po$ such that
$P=(x,t)$ with $|t|<\d r$. Hence $P\in\po\cap B(Q,r)$. If $\pi_{Q,r}$
denotes the orthogonal projection onto $L(Q,r)$ we have for $\d$ small
enough
\begin{eqnarray}\label{eqn-size.7}
\H^{n-1}(\po\cap B(Q,r)) 
& \ge & \H^{n-1}(\Pi_{Q,r}(\po\cap B(Q,r)) \\
& \ge & \o_{n-1}r^{n-1}(1-4\d^2)^{\frac{n-1}{2}}\nonumber  \\
& \ge & \o_{n-1}r^{n-1}(1-\d),\nonumber
\end{eqnarray}
which ensures that (\ref{eqn-RP4}) holds.
Since $\H^{n-1}\res\po$ is a Radon measure
for $\H^{n-1}\res\po$ -a.e. $Q$
\begin{equation}\label{eqn-RP5}
\Theta^{\ast, n-1}(\H^{n-1}\res\po, Q)=\limsup_{r\to 0}\frac{\H^{n-1}(B(Q,r)\cap\po)}{\o_{n-1}r^{n-1}}\le 1,
\end{equation}
see \cite{M} Theorem 6.2. Thus combining (\ref{eqn-RP4}) and (\ref{eqn-RP5}) we conclude that
for $Q\in\G^\ast$,
\begin{equation}\label{eqn-RP6}
\Theta^{n-1} (\H^{n-1}\res\po, Q)=\lim_{r\to 0}\frac{\H^{n-1}(B(Q,r)\cap\po)}{\o_{n-1}r^{n-1}}= 1.
\end{equation}
Thus since $\H^{n-1}\res\po$ is a Radon measure by Corollary 6.3 in \cite{M} for $\H^{n-1}$-a.e.
$Q\in \G^\ast$
\begin{equation}\label{eqn-RP7}
\Theta^{n-1} (\H^{n-1}\res\po, Q)= \Theta^{n-1} (\H^{n-1}\res\G^\ast, Q)=1.
\end{equation}
Therefore Theorem 17.6 in \cite{M} 
ensures that $\G^\ast$ is $(n-1)$-rectifiable.\qed
\end{proof}

The following theorem proves that there are no Wolff snowflakes for which
$\o^+$ and $\o^-$ are mutually absolutely continuous, answering a question in \cite{LVV}.

\begin{thm}\label{thm-TS.A.1}
Let $\O$ be a 2-sided locally NTA domain. Assume that $\o^+$ and $\o^-$ are
mutually absolutely continuous, then
\begin{equation}\label{eqn-TS.A.34}
\H-\dim\o^+= \H-\dim\o^-= n-1.
\end{equation}
Here the Hausdorff dimension of $\o^\pm$, $\H-\dim\o^\pm$ is defined as in (\ref{eqn-TS.A.33}).
\end{thm}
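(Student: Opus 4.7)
The plan is to combine Corollary \ref{cor-bdy-decomp} with the Beurling-type inequality (\ref{prel-eqn13B}) and a standard Frostman-type argument relating pointwise lower density bounds to Hausdorff dimension of a measure.

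First, under the hypothesis $\omega^+\ll\omega^-\ll\omega^+$ I would show that the singular part $S=\Lambda_2\cup\Lambda_3$ of the decomposition carries no $\omega^\pm$ mass. Indeed, by definition $\omega^+(\Lambda_2)=0$, so the hypothesis $\omega^-\ll\omega^+$ gives $\omega^-(\Lambda_2)=0$ as well, and symmetrically $\omega^\pm(\Lambda_3)=0$; meanwhile $\omega^\pm(N)=0$ always. Consequently $\omega^\pm$ is concentrated on $\Gamma^\ast$, and in particular $\omega^\pm(\Gamma^\ast)>0$ whenever $\partial\Omega\neq\emptyset$.

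Next, the upper bound $\mathcal{H}\text{-}\dim\omega^\pm\le n-1$ follows immediately from Theorem \ref{lem-TS.A.3}, which gives $\dim_{\mathcal{H}}\Gamma^\ast\le n-1$: taking $E=\Gamma^\ast$ in the definition (\ref{eqn-TS.A.33}), for every $k>n-1$ one has $\mathcal{H}^k(\Gamma^\ast)=0$ and $\omega^\pm(\Gamma^\ast\cap K)=\omega^\pm(\partial\Omega\cap K)$ for every compact $K$, so the infimum in (\ref{eqn-TS.A.33}) is at most $n-1$.

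For the matching lower bound I would reuse the computation already performed in the proof of Corollary \ref{cor-bdy-decomp}: the Beurling-type inequality (\ref{prel-eqn13B}) applied together with $h=d\omega^-/d\omega^+\in L^1_{\loc}(\omega^+)$ and the fact that $\fint_{B(Q,r)}h\,d\omega^+\to h(Q)<\infty$ for $Q\in\Gamma\supset\Gamma^\ast$ yields
\[
\liminf_{r\to 0}\frac{\log\omega^+(B(Q,r))}{\log r}\ge n-1\qquad\text{for every }Q\in\Gamma^\ast,
\]
and by symmetry the same holds for $\omega^-$. By a standard fact (Proposition 2.3 of \cite{F}, as already invoked in the proof of Corollary \ref{cor-bdy-decomp}), any Borel set $A\subset\Gamma^\ast$ with $\omega^\pm(A)>0$ satisfies $\dim_{\mathcal{H}}A\ge n-1$. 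Applying this to any Borel set $E\subset\partial\Omega$ as in the infimum (\ref{eqn-TS.A.33}) forces $\dim_{\mathcal{H}}(E\cap\Gamma^\ast)\ge n-1$, because $\omega^\pm(E\cap\Gamma^\ast)=\omega^\pm(\Gamma^\ast)>0$; hence $\mathcal{H}\text{-}\dim\omega^\pm\ge n-1$. Combining the two bounds gives (\ref{eqn-TS.A.34}). No step here is a real obstacle: the only slightly delicate point is verifying that $S$ is $\omega^\pm$-null under the mutual absolute continuity assumption, which is the observation that unlocks the reduction of everything to the set $\Gamma^\ast$ already controlled by Corollary \ref{cor-bdy-decomp}.
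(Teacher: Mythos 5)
Your proposal is correct and follows essentially the same route as the paper: mutual absolute continuity kills $\Lambda_2\cup\Lambda_3$ so that $\o^\pm$ concentrates on $\G^\ast$, the upper bound comes from $\dim_\H\G^\ast\le n-1$ in Theorem \ref{lem-TS.A.3}, and the lower bound comes from the Beurling-type estimate (\ref{prel-eqn13B}) together with Proposition 2.3 of \cite{F}, exactly as in the proof of Corollary \ref{cor-bdy-decomp}. If anything, you spell out more carefully than the paper why the pointwise bound $\liminf_{r\to 0}\log\o^+(B(Q,r))/\log r\ge n-1$ on $\G^\ast$ forces \emph{every} full-measure set $E$ in the infimum (\ref{eqn-TS.A.33}) to have dimension at least $n-1$, a detail the paper compresses into the citation of (\ref{eqn-decomp4}).
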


\begin{proof}
Since $\o^+$ and $\o^-$ are
mutually absolutely continuous it is easy to see that $\H-\dim\o^+= \H-\dim\o^-$.
For each compact set $K\subset\RR^n$, $\o^\pm(\Gamma\cap K)=\o^{\pm}(\po\cap K)$,
hence by for $\Gamma^\ast=\Gamma\backslash\Gamma_0$ with $\Gamma_0$ as in
Theorem \ref{lem-TS.A.3}, $\o^\pm(\Gamma^\ast\cap K)=\o^{\pm}(\po\cap K)$,
and $\dim_\H\Gamma^\ast \le n-1$, i.e. $\forall k>n-1$,
$\H^k(\Gamma^\ast)=0$ which implies that $\H-\dim\o^+\le n-1$ and
$\H-\dim\o^-\le n-1$. Since in this case $\o^\pm(\G^\ast)>0$ (by (\ref{eqn-decomp1}) and (\ref{eqn-decomp2})), (\ref{eqn-decomp4}) yields (\ref{eqn-TS.A.34}).
\qed
\end{proof} 

We conclude by having a second look at $\G$ motivated by the 2-dimensional results in 
Chapter VI of \cite{GM}. Denote by $\o=\o^\pm$, and define
\begin{equation}\label{eqn-RP-g}
\Gamma_g=\left\{Q\in\Gamma:\, 0<\limsup_{r\to 0}\frac{\o(B(Q,r))}{r^{n-1}}<\infty\right\}, \qquad
\G^\ast_g=\G_g\cap\G^\ast,
\end{equation}
\begin{equation}\label{eqn-RP-b}
\Gamma_b=\left\{Q\in\Gamma:\, \limsup_{r\to 0}\frac{\o(B(Q,r))}{r^{n-1}}=0\right\}\qquad
\G^\ast_b=\G_b\cap\G^\ast.
\end{equation}
Since $\o^+\res\Lambda_1$ and $\o^-\res\Lambda_1$ are mutually absolutely continuous, and $\Gamma\subset \Lambda_1$, see (\ref{eqn-size.18}), $\Gamma_g$ and $\Gamma_b$ are well defined. 

By (\ref{prel-eqn13B}) for $Q_0\in\po$, $r_0<1/8\min\{\delta(X^+), \delta(X^-)\}$, $Q\in \Gamma\cap B(Q_0, \frac{r_0}{2})$ and $0<r<r_0$
\begin{equation}\label{eqn-RP1}
\left(\NOTINT_{_{B(Q,r)}}h\, d\o^+\right)\left(\frac{\o^+(B(Q,r))}{r^{n-1}}\right)^2\le C(Q_0, r_0).
\end{equation}
Thus for $Q\in\Gamma$, $\limsup_{r\to 0}\frac{\o(B(Q,r))}{r^{n-1}}<\infty$ which ensures that 
$\Gamma=\Gamma_g\cup \Gamma_b$.

\begin{lem}\label{lem-s-finite}
Let $\O$ be a 2-sided locally NTA domain. Then $\H^{n-1}\res\G_g$ and $\o\res\G_g$ are mutually absolutely
continuous.  In particular $\H^{n-1}\res\G_g$ is $\sigma$-finite.  Furthermore $\G^\ast=\G^\ast_g\cup
\G^\ast_b\cup Z$ with $\o(Z)=0$. 
Moreover if for $E\subset\RR^n$
Borel, $\o(\G^\ast_b\cap E)>0$, then 
$\H^{n-1}(\G^\ast_b\cap E)=\infty$. 
\end{lem}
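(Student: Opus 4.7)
The plan is to deduce both main claims from Mattila's density comparison theorem (Theorem 6.9 in \cite{M}), applied to the Radon measure $\o$ on the sets $\G_g$ and $\G^\ast_b$ where the upper $(n-1)$-density has been constrained. The preliminary observation is that (\ref{prel-eqn13B}), together with $\NotInt_{B(Q,r)} h\, d\o^+\to h(Q)\in(0,\infty)$ at every $Q\in\G$, forces $\limsup_{r\to 0}\o(B(Q,r))/r^{n-1}<\infty$ on all of $\G$. Consequently $\G=\G_g\cup\G_b$ is a disjoint union, $\G^\ast=\G^\ast_g\cup\G^\ast_b$, and the set $Z$ in the statement can simply be taken empty.

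For the first part of the lemma I would stratify
\begin{equation*}
\G_g=\bigcup_{k\in\NN}\G_g^{(k)},\qquad \G_g^{(k)}=\left\{Q\in\G:\ \tfrac{1}{k}\le\limsup_{r\to 0}\frac{\o(B(Q,r))}{r^{n-1}}\le k\right\}.
\end{equation*}
On each $\G_g^{(k)}$ both halves of Mattila's Theorem 6.9 apply (with pointwise upper and lower upper-density bounds) and give that $\o\res\G_g^{(k)}$ and $\H^{n-1}\res\G_g^{(k)}$ are comparable up to constants depending on $k$ and $n$; in particular they are mutually absolutely continuous. Mutual absolute continuity of $\o\res\G_g$ and $\H^{n-1}\res\G_g$ then follows by taking countable unions. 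Since $\o$ is a Radon measure, $\o(\G_g^{(k)}\cap K)<\infty$ for every compact $K$; the comparison then yields $\H^{n-1}(\G_g^{(k)}\cap K)<\infty$, and writing $\G_g$ as a countable union of such pieces shows that $\H^{n-1}\res\G_g$ is $\sigma$-finite.

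For the final assertion, I would argue by contradiction. Let $E\subset\RR^n$ be Borel with $\o(\G^\ast_b\cap E)>0$ and suppose $\H^{n-1}(\G^\ast_b\cap E)<\infty$. Every $Q\in\G^\ast_b$ satisfies $\limsup_{r\to 0}\o(B(Q,r))/r^{n-1}=0$ by the definition of $\G_b$, hence this upper density is $\le t$ for every $t>0$. The upper-density half of Mattila's Theorem 6.9 then gives
\begin{equation*}
\o(\G^\ast_b\cap E)\le C(n)\, t\, \H^{n-1}(\G^\ast_b\cap E)
\end{equation*}
for every $t>0$, and letting $t\to 0$ forces $\o(\G^\ast_b\cap E)=0$, a contradiction.

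I do not expect a serious obstacle: once the correct GMT input (Mattila's Theorem 6.9) has been identified, neither part of the lemma requires any further blow-up or harmonic-function analysis beyond the Beurling-type bound (\ref{prel-eqn13B}) already established. The only point to be a little careful about is the measurability of $\G^\ast_b$, which is inherited from the Borel character of $\G$ and from the construction of $\G^\ast$ via tangent measures; if any measurability concern did arise one could replace $\G^\ast_b\cap E$ by a Borel hull of the same $\o$-measure without changing the conclusion.
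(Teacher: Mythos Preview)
Your proposal is correct and follows essentially the same route as the paper: you stratify $\G_g$ by the value of the upper $(n-1)$-density and apply the standard density--Hausdorff comparison (the paper cites Falconer's Proposition~2.2, you cite Mattila's Theorem~6.9, which is the same result) to get the two-sided comparability and hence mutual absolute continuity and $\sigma$-finiteness, and you handle $\G^\ast_b$ by the same comparison with $t\to 0$. Your remark that $Z$ may be taken empty is also in line with the paper, which establishes $\G=\G_g\cup\G_b$ from (\ref{prel-eqn13B}) just before the lemma and does not revisit $Z$ in the proof.
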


\begin{proof}
Let
\begin{equation}\label{eqn-RP2}
\G_g=\bigcup_{i= 1}^\infty\G_g^i=\bigcup_{i= 1}^\infty\left\{Q\in \G_g;\,  2^{-i}\le \limsup_{r\to 0}\frac{\o(B(Q,r))}{r^{n-1}}\le 2^i\right\}.
\end{equation}
By Proposition 2.2 in \cite{F} for any Borel set $E\subset \G$ and $i,\, k\in\NN$ 
\begin{equation}\label{eqn-RP3}
2^{-i}\o(E\cap\G^i_g\cap B(0,k))\le \H^{n-1}(E\cap\G^i_g\cap B(0,k))\le 2^{n+i}\o(E\cap\G^i_g\cap B(0,k)),
\end{equation}
which proves the statements that $\H^{n-1}\res\G_g$ and $\o\res\G_g$ are mutually absolutely
continuous. The statement about $\G_b$ is a simple consequence of Proposition 2.2 in \cite{F}.\qed
\end{proof}

\begin{cor}\label{struct-sum}
Let $\O$ be a 2-sided locally NTA domain. Then the boundary of $\O$ can be decomposed as
follows:
\begin{equation}\label{eqn-decomp1A}
\po=\G^\ast_g\cup\G^\ast_b\cup S\cup \widetilde N,
\end{equation}
where
\begin{equation}\label{eqn-decomp2A}
\o^+\ll \o^-\ll \o^+ \ \hbox{ in }\ \G^\ast_g\cup\G^\ast_b,\ \ 
\o^+\perp \o^-\ \hbox{ in }\ S,\ \hbox {and }\ \o^+(\widetilde N)=\o^-(\widetilde N)=0.
\end{equation}
On $\G^\ast_g$, $\H^{n-1}$ is $\sigma$-finite, and $\o$ and $\H^{n-1}$ are mutually
absolutely continuous. On $\G^\ast_b$ for any Borel set $E$, if $\o(\G^\ast_b\cap E)>0$, then 
$\H^{n-1}(\G^\ast_b\cap E)=\infty$. 
\end{cor}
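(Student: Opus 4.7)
The plan is to obtain Corollary \ref{struct-sum} by combining the decomposition of $\po$ already supplied by Corollary \ref{cor-bdy-decomp} with the finer splitting of $\G^\ast$ given in Lemma \ref{lem-s-finite}. There is essentially no new content to establish; the work is bookkeeping.

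First I would apply Corollary \ref{cor-bdy-decomp} to write
\[
\po = \G^\ast \cup S \cup N,
\]
where $\o^+ \ll \o^- \ll \o^+$ on $\G^\ast$, $\o^+ \perp \o^-$ on $S$, and $\o^\pm(N)=0$. Since $\G^\ast \subset \G \subset \Lambda_1$, mutual absolute continuity on $\G^\ast$ is automatic.

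Next I would invoke Lemma \ref{lem-s-finite} to split
\[
\G^\ast = \G^\ast_g \cup \G^\ast_b \cup Z, \qquad \o^\pm(Z)=0,
\]
and set $\widetilde{N} := N \cup Z$. Then $\o^\pm(\widetilde{N}) \le \o^\pm(N) + \o^\pm(Z) = 0$, which gives the four-piece decomposition
\[
\po = \G^\ast_g \cup \G^\ast_b \cup S \cup \widetilde{N}
\]
with all the absolute continuity and singularity claims inherited from the two ingredient statements. The assertion that $\H^{n-1}\res \G^\ast_g$ is $\sigma$-finite and mutually absolutely continuous with $\o$ on $\G^\ast_g$ comes directly from Lemma \ref{lem-s-finite} (cf.\ (\ref{eqn-RP2})--(\ref{eqn-RP3})), and likewise the statement that $\H^{n-1}(\G^\ast_b \cap E) = \infty$ whenever $\o(\G^\ast_b\cap E)>0$ is the remaining assertion of that lemma, which in turn follows from Proposition 2.2 in \cite{F}.

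Since every ingredient has been proved in the preceding material, no step is a genuine obstacle; the only mild point to check is that the $\widetilde{N}$ defined above really does gather all the $\o^\pm$-null exceptional sets, which is immediate because both $N$ and $Z$ are $\o^\pm$-null. Thus the corollary is a formal consequence of Corollary \ref{cor-bdy-decomp} and Lemma \ref{lem-s-finite}, and the proof is essentially a one-line assembly.
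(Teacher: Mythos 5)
Your proposal is correct and matches the paper's intent exactly: the paper offers no separate proof for this corollary because it is precisely the assembly of Corollary \ref{cor-bdy-decomp} with Lemma \ref{lem-s-finite} that you describe, with $\widetilde N = N\cup Z$. The only point worth noting is that Lemma \ref{lem-s-finite} states mutual absolute continuity and $\sigma$-finiteness for $\H^{n-1}\res\G_g$ rather than $\G^\ast_g$, but since $\G^\ast_g\subset\G_g$ these properties pass to the subset, as your argument implicitly uses.
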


\addcontentsline{toc}{section}{References}

\end{document}